\numberwithin{equation}{section}
\newtheorem{theorem}{Theorem}[section]
\newtheorem{definition}[theorem]{Definition}
\newtheorem{lemma}[theorem]{Lemma}
\newtheorem{remark}[theorem]{Remark}
\newtheorem{proposition}[theorem]{Proposition}
\newtheorem{example}[theorem]{Example}
\newcommand{\N}{\mathbb{N}}
\newcommand{\R}{\mathbb{R}}
\newcommand{\E}{\mathbb{E}}
\newcommand{\mP}{\mathbb{P}}
\newcommand{\T}{\mathbb{T}}
\newcommand{\Z}{\mathbb{Z}}
\newcommand{\Rq}{\overline{\R}}
\newcommand{\1}{\mathds{1}}
\newcommand{\cA}{\mathcal{A}}
\newcommand{\cR}{\mathcal{R}}
\newcommand{\cF}{\mathcal{F}}
\newcommand{\cN}{\mathcal{N}}
\newcommand{\cB}{\mathcal{B}}
\newcommand{\cS}{\mathcal{S}}
\newcommand{\cU}{\mathcal{U}}
\newcommand{\cW}{\mathcal{W}}
\newcommand{\id}{\hbox{id}}
\newcommand{\eps}{\varepsilon}
\newcommand{\abvallr}[1]{\left|#1\right|}
\newcommand{\abvalb}[1]{\big|#1\big|}
\newcommand{\clintb}[1]{\big[#1\big]}
\newcommand{\opintb}[1]{\big(#1\big)}
\newcommand{\opintBg}[1]{\Bigg(#1\Bigg)}
\newcommand{\opintlr}[1]{\left(#1\right)}
\newcommand{\set}[1]{\{#1\}}
\newcommand{\setb}[1]{\big\{#1\big\}}
\newcommand{\setB}[1]{\Big\{#1\Big\}}
\newcommand{\norm}[1]{\|#1\|}
\newcommand{\normb}[1]{\big\|#1\big\|}
\newcommand{\minequ}[1]{\!\!\!&#1&\!\!\!}
\newcommand{\vp} {{\varphi}}
\newcommand{\fa} {\quad \text{for all }\,}
\newcommand{\fs} {\quad \text{for some }\,}
\newcommand{\faa} {\quad \text{for almost all }\,}
\newcommand{\rk}{\operatorname{rk}}
\newcommand{\dist}{\operatorname{dist}}
\newcommand{\rmd}{\mathrm{d}}
\newcommand{\defref}[1]{Definition~\ref{#1}}
\newcommand{\examref}[1]{Example~\ref{#1}}
\newcommand{\lemmaref}[1]{Lemma~\ref{#1}}
\newcommand{\propref}[1]{Proposition~\ref{#1}}
\newcommand{\secref}[1]{Section~\ref{#1}}
\newcommand{\subsecref}[1]{Subsection~\ref{#1}}
\newcommand{\theoref}[1]{Theorem~\ref{#1}}
\newcommand{\mand}{\text{ and }}
\newcommand{\mor}{\text{ or }}
\newcommand{\mwith}{\text{ with }}
\newcommand{\qandq}{\quad\text{and}\quad}
\newcommand{\qorq}{\quad\text{or}\quad}
\DeclareMathOperator*{\esssup}{ess\,sup}
\DeclareMathOperator*{\essinf}{ess\,inf}
\begin{document}

\title[The dichotomy spectrum and pitchfork bifurcations with additive noise]{The dichotomy spectrum for random dynamical systems\\and pitchfork bifurcations with additive noise}

\author[M.~Callaway]{Mark Callaway}
\author[T.S.~Doan]{Doan Thai Son}
\author[J.S.W.~Lamb]{Jeroen S.W.~Lamb}
\author[M.~Rasmussen]{Martin Rasmussen \vspace{2ex}\\ Department of Mathematics\\ Imperial College London\\ 180 Queen's Gate\\ London SW7 2AZ\\ United Kingdom}

\date{\today}

\subjclass[2010]{37H15, 37H20}

\thanks{The fourth author was supported by an EPSRC Career Acceleration Fellowship (2010--2015)}

\maketitle

\begin{abstract}
We develop the \emph{dichotomy spectrum} for random dynamical system and demonstrate its use in the characterization of pitchfork bifurcations for random dynamical systems with additive noise.\\[0.1cm]
  Crauel and Flandoli \cite{Crauel_98_1} had shown earlier that adding noise to a system with a deterministic pitchfork bifurcation yields a unique attracting random fixed point with negative Lyapunov exponent throughout,  thus "destroying" this bifurcation. Indeed, we show that in this example the dynamics before and after the underlying deterministic bifurcation point are topologically equivalent.\\[0.1cm]
  However, in apparent paradox to \cite{Crauel_98_1}, we show that there is after all a qualitative change in the random dynamics at the underlying deterministic bifurcation point, characterized by the transition from a hyperbolic to a non-hyperbolic dichotomy spectrum. This breakdown manifests itself also in the loss of uniform attractivity, a loss of experimental observability of the Lyapunov exponent, and a loss of equivalence under uniformly continuous topological conjugacies.
\end{abstract}


\section{Introduction}
Despite its importance for applications, relatively little progress has been made towards the development of a bifurcation theory for random dynamical systems. Main contributions have been made by Ludwig~Arnold  and co-workers \cite{Arnold_98_1}, distinguishing between \emph{phenomenological} (P-) and a \emph{dynamical} (D-) bifurcations. P-bifurcations refer to qualitative changes in the profile of stationary probability densities \cite{Sri_90_1}. This concept carries substantial drawbacks such as providing reference only to static properties, and not being independent of the choice of coordinates. D-bifurcations refer to the bifurcation of a new invariant measure from a given invariant reference measure, in the sense of weak convergence, and are associated with a qualitative change in the Lyapunov spectrum. They have been studied mainly in the case of multiplicative noise \cite{Baxendale_94_1,Crauel_99_1,Wang_Unpub_1}, and numerically \cite{Arnold_99_2,Keller_99_1}.

In this paper, we contribute to the bifurcation theory of random dynamical systems by shedding new light on the influential paper \emph{Additive noise destroys a pitchfork bifurcation} by Crauel and Flandoli \cite{Crauel_98_1}, in which the stochastic differential equation
\begin{equation}\label{sdeintro}
  \rmd x= \opintb{\alpha x-x^3}\rmd t+ \sigma \rmd W_t\,,
\end{equation}
with two-sided Wiener process $(W_t)_{t\in\R}$ on a probability space $(\Omega,\cF,\mP)$, was studied. In the deterministic (noise-free) case,  $\sigma = 0$, this system has a pitchfork bifurcation of equilibria: if $\alpha<0$ there is one equilibrium ($x=0$) which is globally attractive, and if $\alpha>0$, the trivial equilibrium is repulsive and there are two additional attractive equilibria $\pm \sqrt{\alpha}$. \cite{Crauel_98_1} establish the following facts in the presence of noise, i.e.~when $\sigma>0$:
\begin{itemize}
  \item[(i)]
  For all $\alpha\in\R$, there is a unique globally attracting random fixed point $\set{a_\alpha(\omega)}_{\omega\in\Omega}$.
  \item[(ii)]
 The Lyapunov exponent associated to $\set{a_\alpha(\omega)}_{\omega\in\Omega}$ is negative for all $\alpha\in\R$.
\end{itemize}
As a result,  \cite{Crauel_98_1} concludes that the pitchfork bifurcation is destroyed by the additive noise. (This refers to the absence of  D-bifurcation, as \eqref{sdeintro} admits a qualitative change  P-bifurcation, see \cite[p.~473]{Arnold_98_1}.)
However, we are inclined to argue that the pitchfork bifurcation is not destroyed by additive noise, on the basis of the following additional facts concerning the dynamics near the bifurcation point, that we obtain in this paper:
\begin{itemize}
  \item[(i)]
The attracting random fixed point $\set{a_\alpha(\omega)}_{\omega\in\Omega}$ is uniformly attractive only if $\alpha<0$
(\theoref{Theorem_1}).
\item[(ii)] At the bifurcation point there is a change in the practical observability of the Lyapunov exponent: 
when $\alpha<0$ all finite-time Lyapunov exponents are negative, but when $\alpha>0$ there is a positive probability to observe positive finite-time Lyapunov exponents, irrespectively of the length of time interval under consideration
(\theoref{Finite-timeBifurcation}).
  \item[(ii)]
  The bifurcation point $\alpha=0$ is characterized by a qualitative change in the dichotomy spectrum associated to $\set{a_\alpha(\omega)}_{\omega\in\Omega}$ (\theoref{DichotomySpetrumc-Bifurcation}). In addition, we show that the dichotomy spectrum is directly related to the observability range of the finite-time Lyapunov spectrum (\theoref{theo_1}).
\end{itemize}
In light of these findings, we thus argue for the recognition of qualitative properties of the dichotomy spectrum as an additional indicator for bifurcations of random dynamical systems.
Spectral studies of random dynamical systems have focused mainly on Lyapunov exponents \cite{Arnold_98_1,Cong_97_1}, but here we develop an alternative spectral theory based on exponential dichotomies that is related to the Sacker--Sell (or dichotomy) spectrum for nonautonomous differential equations. The original construction due to R.J.~Sacker and G.R.~Sell \cite{Sacker_78_2} requires a compact base set (which can be obtained, for instance, from an almost periodic differential equation). Alternative approaches to the dichotomy spectrum
\cite{Aulbach_01_2,BenArtzi_93_1,Rasmussen_09_1,Rasmussen_10_2,Siegmund_02_4} hold in the general non-compact case, and we use similar techniques for the construction of the dichotomy spectrum by combining them with ergodic properties of the base flow. We note that the relationship between the dichotomy spectrum and Lyapunov spectrum has also been explored in \cite{Johnson_87_1} in the special case that the base space of a random dynamical system is a compact metric space, but our setup does not require a topological structure of the base.

In analogy to the corresponding bifurcation theory for one-dimensional deterministic dynamical systems, we finally study whether the pitchfork bifurcation with additive noise can be characterized in terms of a breakdown of topologically equivalence. We recall that two random dynamical systems $(\theta,\vp_1)$ and $(\theta,\vp_2)$ are said to be topologically equivalent if there are families $\set{h_\omega}_{\omega\in\Omega}$ of homeomorphisms of the state space such that $\vp_2(t, \omega, h_\omega(x)) = h_{\theta_t\omega}(\vp_1(t, \omega, x))$, almost surely. We establish the following results for the stochastic differential equation \eqref{sdeintro}:
\begin{itemize}
  \item[(i)]
  Throughout the bifurcation, i.e.~for $|\alpha|$ sufficiently small, the resulting dynamics are topologically equivalent (\theoref{Theorem2}).
  \item[(ii)]  
  There does not exist a uniformly continuous topological conjugacy between the dynamics of cases with positive and negative   parameter $\alpha$ (\theoref{theo1}).
\end{itemize}
These results lead us to propose the association of bifurcations of random dynamical systems with a breakdown of \emph{uniform} topological equivalence, rather than the weaker form of general topological equivalence with no requirement on uniform continuity of the involved conjugacy. Note that uniformity of equivalence transformations plays an important role in the notion of equivalence for nonautonomous linear systems (i.e.~in contrast to random systems, the base set of nonautonomous systems is not a probability but a topological space), see \cite{Palmer_79_1}.

%
%

This paper is organised as follows. In \secref{secinvpred}, invariant projectors and exponential dichotomies are introduced for random dynamical systems. \secref{lin.sec1} is devoted to the development of the dichotomy spectrum. In \secref{sec_bif}, we
discuss the pitchfork bifurcation with additive noise, reviewing the results of \cite{Crauel_98_1} and develop our main results in relationship to the dichotomy spectrum. Finally, in \secref{sec_1}, we discuss the existence (and absence) of (uniform) topological equivalence of the dynamics in the neighbourhood of the bifurcation point.
Important preliminaries on random dynamical systems are provided in the appendix.

\section{Exponential dichotomies for random dynamical systems}\label{secinvpred}

In this section, we define invariant projectors and exponential dichotomies as tools to describe hyperbolicity and (un)-stable manifolds of linear random dynamical systems.

Let $(\Omega,\cF,\mP)$ be a probability space and $(X,d)$ be a metric space. A \emph{random dynamical system} $(\theta,\vp)$ (RDS for short) consists of a metric dynamical system $\theta:\T\times \Omega \to \Omega$ (which models the noise, see Appendix) and a $(\cB(\T)\otimes\cF\otimes\cB(X),\cB(X))$-measurable mapping $\vp:\T\times \Omega\times X \to X$ (which models the dynamics of the system) fulfilling
\begin{itemize}
  \item[(i)] $\vp(0,\omega,x)= x$ for all $\omega\in\Omega$ and $x\in X$,
  \item[(ii)] $\vp(t+s,\omega,x) = \vp(t,\theta_s\omega,\vp(s,\omega,x))$ for all $t,s\in \T$, $\omega\in\Omega$ and $x\in X$.
\end{itemize}
Note that we frequently use the abbreviation $\vp(t,\omega)x$ for $\vp(t,\omega,x)$ (even if the random dynamical systems under consideration is nonlinear). We also say that a random dynamical system $(\theta,\vp)$ is ergodic if $\theta$ is ergodic.

For the spectral theory part of this paper, suppose that the phase space $X$ is given by the Euclidean space $\R^d$. A random dynamical system $(\theta, \vp)$ is called \emph{linear} if for given $\alpha,\beta\in \R$, we have
\begin{displaymath}
  \vp(t,\omega)(\alpha x +\beta y)
    =
  \alpha\vp(t,\omega)x+\beta\vp(t,\omega)y
\end{displaymath}
for all $t\in \T$, $\omega\in\Omega$ and $x,y\in\R^d$. Given a linear random dynamical system $(\theta,\vp)$, there exists a corresponding matrix-valued function $\Phi: \T \times \Omega\to \R^{d\times d}$ with $\Phi(t,\omega)x = \vp(t,\omega)x$ for all $t\in\T$, $\omega\in\Omega$ and $x\in \R^d$.

Given a linear random dynamical system $(\theta, \Phi)$, an invariant random set $M$ (see Appendix) is called \emph{linear random set} if for each $\omega\in \R$, the set $M(\omega)$ is a linear subspace of $\R^d$. Given linear random sets $M_1, M_2$,
\begin{displaymath}
  \omega \mapsto M_1(\omega) \cap M_2(\omega)
    \qandq
  \omega \mapsto M_1(\omega) + M_2(\omega)
\end{displaymath}
are also linear random sets, denoted by $M_1\cap M_2$ and $M_1+M_2$, respectively. A finite sum $M_1 + \dots + M_n$ of linear random sets is called \emph{Whitney sum} $M_1 \oplus \dots \oplus M_n$ if $M_1(\omega) \oplus \dots\oplus M_n(\omega) =  \R^d$ holds for almost all $\omega\in\Omega$.

An \emph{invariant projector} of $(\theta,\vp)$ is a measurable function $P: \Omega \to \R^{d \times d}$ with
\begin{displaymath}
  P(\omega)
    =
  P(\omega)^2
    \qandq
  P(\theta_t\omega)\Phi(t,\omega)
    =
  \Phi(t,\omega)P(\omega) \fa t \in\R \mand \omega \in\Omega\,.
\end{displaymath}
The \emph{range}
\begin{displaymath}
  \cR(P) := \setb{(\omega, x) \in \Omega \times \R^d: x \in \cR P(\omega)}
\end{displaymath}
and the \emph{null space}
\begin{displaymath}
  \cN(P) := \setb{(\omega, x ) \in \Omega \times \R^d: x \in \cN P(\omega)}
\end{displaymath}
of an invariant projector $P$ are linear random sets of $(\theta,\vp)$ such that $\cR(P) \oplus \cN(P) = \Omega \times \R^d$.

The following proposition says that, provided ergodicity, the dimensions of the range and the null space of an invariant projector are almost surely constant.

\begin{proposition}\label{Lemma1}
  Let $P:\Omega\to  \R^{d\times d}$ be an invariant projector of an ergodic linear random dynamical system $(\theta,\vp)$. Then
  \begin{itemize}
    \item [(i)]
    the mapping $\omega\mapsto \rk P(\omega)$ is measurable, and
    \item [(ii)]
    $\rk P(\omega)$ is almost surely constant.
  \end{itemize}
\end{proposition}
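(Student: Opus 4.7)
The plan is to treat (i) and (ii) separately, using in each case a short elementary observation together with one standard ingredient.

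For (i), I would rely on the identity $\rk Q = \operatorname{tr} Q$ valid for any projection matrix $Q$, which holds because a projection is similar to a block-diagonal matrix with a $\rk Q \times \rk Q$ identity block and a zero block, so its eigenvalues are $0$'s and $1$'s with $\rk Q$ many $1$'s. Since $P:\Omega\to\R^{d\times d}$ is measurable, each scalar entry $\omega\mapsto P(\omega)_{ij}$ is measurable, and hence so is $\omega\mapsto \rk P(\omega)=\sum_{i=1}^{d} P(\omega)_{ii}$. (Alternatively, and equivalently, one could argue via preimages of the $k\times k$ minors under the measurable map $\omega\mapsto P(\omega)$.)

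For (ii), the first step is to note that $\Phi(t,\omega)\in GL(d,\R)$ for every $t\in\T$ and $\omega\in\Omega$: applying the cocycle property to $t$ and $-t$ gives $\Phi(-t,\theta_t\omega)\Phi(t,\omega)=\id$, which in finite dimension forces invertibility. Rearranging the invariance relation
\[
  P(\theta_t\omega)\Phi(t,\omega)=\Phi(t,\omega)P(\omega)
\]
then yields $P(\theta_t\omega)=\Phi(t,\omega)P(\omega)\Phi(t,\omega)^{-1}$, so $P(\theta_t\omega)$ and $P(\omega)$ are similar and $\rk P(\theta_t\omega)=\rk P(\omega)$ for all $t$ and $\omega$. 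Consequently, for each $k\in\{0,1,\dots,d\}$ the measurable set $A_k:=\set{\omega:\rk P(\omega)=k}$ satisfies $\theta_t^{-1}A_k = A_k$ for every $t\in\T$ and therefore belongs to the $\theta$-invariant $\sigma$-algebra. Ergodicity of $\theta$ forces $\mP(A_k)\in\set{0,1}$, and since the finite family $\set{A_k}_{k=0}^{d}$ partitions $\Omega$, exactly one $A_k$ has full measure, giving almost sure constancy of $\rk P$.

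There is no real technical obstacle here; the argument is essentially bookkeeping that combines three standard facts (rank equals trace for projections, invertibility of a linear cocycle on two-sided time, and ergodicity producing a.s.\ constancy of invariant functions). The one point worth flagging is the invertibility of $\Phi(t,\omega)$, which relies on $\T$ being a group --- consistent with the paper's invariance relation being stated for $t\in\R$ --- and would need to be replaced by a separate argument (e.g.\ directly comparing $\dim\cR P(\theta_t\omega)$ with $\dim\cR P(\omega)$ via the inclusion $\Phi(t,\omega)\cR P(\omega)\subseteq \cR P(\theta_t\omega)$) if one-sided time were allowed.
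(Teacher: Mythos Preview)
Your proof is correct. Part (ii) is essentially identical to the paper's argument: both derive $P(\theta_t\omega)=\Phi(t,\omega)P(\omega)\Phi(t,\omega)^{-1}$ from invariance and invertibility of the cocycle, conclude that $\rk P$ is $\theta$-invariant, and invoke ergodicity (you spell out the level-set argument a bit more explicitly, but that is cosmetic).

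Part (i), however, takes a genuinely different route. The paper proves the general fact that the rank function $A\mapsto\rk A$ is lower semi-continuous on all of $\R^{d\times d}$ (via a determinant/linear-independence argument), then appeals to a representation of lower semi-continuous functions as increasing limits of continuous ones to obtain Borel measurability. Your argument instead exploits the specific structure of projections through the identity $\rk Q=\operatorname{tr} Q$, reducing measurability of the rank to measurability of a finite sum of matrix entries. Your approach is shorter and more elementary for the case at hand; the paper's approach has the advantage of establishing a reusable lemma (measurability of rank for arbitrary measurable matrix-valued maps), which would still apply if $P$ were not idempotent.
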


\begin{proof}
  (i) We first show that the mapping  $A\mapsto \dim A$ on $\R^{d\times d}$ is lower semi-continuous. For this purpose, let $\set{A_k}_{k\in\N}$ be a sequence of matrices in $\R^{d\times d}$ which converges to $A\in\R^{d\times d}$, and define $r:=\dim A$. Then there exist non-zero vectors $x_1,\dots,x_r$ such that $Ax_1,\dots,Ax_r$ are linearly independent, which implies that $\det [Ax_1,\dots,Ax_r,x_{r+1},\dots,x_d]\not=0$ for some vectors $x_{r+1},\dots, x_d\in\R^d$. Since $\lim_{k\to\infty} A_k=A$, one gets
  \begin{displaymath}
    \lim_{k\to\infty} \det [A_k x_1,\dots,A_k x_r,x_{r+1},\dots,x_d]=\det [Ax_1,\dots,Ax_r,x_{r+1},\dots,x_d]\,.
  \end{displaymath}
  Hence, there exists a $k_0\in\N$ such that vectors $A_kx_1,\dots,A_kx_r$ are linearly independent for $k\ge k_0$, and thus, $\dim A_k\geq r$ for all $k\ge k_0$. Consequently, the lower semi-continuity of the mapping $A \mapsto \dim A$ is proved. Therefore, the map $\R^{d\times d}\to  \N, A\mapsto \dim A$ is the limit of a monotonically increasing sequence of continuous functions \cite{Tong_52_1} and thus is measurable. The proof of this part is complete.
  (ii) By invariance of $P$, we get that
  \begin{displaymath}
  P(\theta_t\omega)=\Phi(t,\omega)P(\omega)\Phi(t,\omega)^{-1},
  \end{displaymath}
  which implies that $\dim P(\theta_t\omega)=\dim P(\omega)$. This together with ergodicity of $\theta$ and measurability of the map $\omega\mapsto \dim P(\omega)$ as shown in (i) gives that $\dim P(\omega)$ is almost constant.
\end{proof}
According to \propref{Lemma1}, the rank of an invariant projector $P$ can be defined via
\begin{displaymath}
  \rk P := \dim \cR(P) := \dim \cR P(\omega)\faa \omega\in \Omega\,,
\end{displaymath}
and one sets
\begin{displaymath}
  \dim \cN(P) := \dim \cN P(\omega) \faa \omega\in \Omega.
\end{displaymath}

The following notion of an exponential dichotomy describes uniform exponential splitting of linear random dynamical systems.

\begin{definition}[Exponential dichotomy]\label{ED}
  Let $(\theta, \Phi)$ be a linear random dynamical system, and let $\gamma\in\R$ and $P_\gamma: \Omega \to \R^{d \times d}$ be an invariant projector of $(\theta,\vp)$. Then $(\theta,\vp)$ is said to admit an \emph{exponential dichotomy} with growth rate $\gamma\in\R$, constants
  $\alpha>0$, $K\ge 1$ and projector $P_\gamma$ if for almost all $\omega\in\Omega$, one has
  \begin{align*}
    \norm{\Phi(t, \omega)P_\gamma(\omega)}
      &\le
    K e^{(\gamma-\alpha)t}  \fa t \ge 0\,,\\
    \norm{\Phi(t,\omega)(\1-P_\gamma(\omega))}
      &\le
    K e^{(\gamma+\alpha)t} \fa t\le 0\,.
  \end{align*}
\end{definition}

The following proposition shows that the ranges and null spaces of invariant projectors are given by sums of Oseledets subspaces.

\begin{proposition}
  Let $(\theta,\Phi)$ be an ergodic linear random dynamical system which satisfies the integrability condition of Oseledets Multiplicative Ergodic Theorem (see Appendix). Let $\lambda_1>\dots>\lambda_p$ and $O_1(\omega),\dots, O_p(\omega)$ denote the Lyapunov exponents and the associated Oseledets subspaces of $(\theta,\Phi)$, respectively, and suppose that $\Phi$ admits an exponential dichotomy with growth rate $\gamma\in\R$ and projector $P$. Then the following statements hold:
  \begin{itemize}
    \item [(i)]
    $\gamma\not\in\{\lambda_1,\dots,\lambda_p\}$.
    \item [(ii)]
    Define $k:=\max\setb{i\in\{0,\dots,p\}: \lambda_i>\gamma}$ with the convention that $\lambda_0=\infty$. Then for almost all $\omega\in\Omega$, one has
    \begin{displaymath}
      \cN P(\omega)=\bigoplus_{i=1}^{k}O_i(\omega) \qandq \cR P(\omega)=\bigoplus_{i=k+1}^{p}O_i(\omega)\,.
    \end{displaymath}
  \end{itemize}
\end{proposition}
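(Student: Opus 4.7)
The plan is to derive from the exponential dichotomy a complete set of two-sided growth bounds for vectors in $\cR(P(\omega))$ and $\cN(P(\omega))$, and then to compare these with the Oseledets asymptotics, which—thanks to the Oseledets Multiplicative Ergodic Theorem—provide the two-sided limit $t^{-1}\log\|\Phi(t,\omega)v\|\to\lambda_i$ for every $v\in O_i(\omega)\setminus\{0\}$ almost surely. The dichotomy directly gives the upper bounds $\|\Phi(t,\omega)v_R\|\le Ke^{(\gamma-\alpha)t}\|v_R\|$ for $t\ge 0$ and $v_R\in\cR(P(\omega))$, and $\|\Phi(t,\omega)v_N\|\le Ke^{(\gamma+\alpha)t}\|v_N\|$ for $t\le 0$ and $v_N\in\cN(P(\omega))$. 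The matching lower bounds on the other half-line—namely $\|\Phi(t,\omega)v_N\|\ge K^{-1}e^{(\gamma+\alpha)t}\|v_N\|$ for $t\ge 0$ and $\|\Phi(t,\omega)v_R\|\ge K^{-1}e^{(\gamma-\alpha)t}\|v_R\|$ for $t\le 0$—I would obtain by applying the given estimates at $\theta_t\omega$ with time $-t$, using invariance of $P$ (which guarantees $\Phi(t,\omega)$ maps $\cR(P(\omega))$ onto $\cR(P(\theta_t\omega))$ and likewise for the null spaces) together with the cocycle identity.

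Next, I would fix $\omega$ in the full-measure set where both the dichotomy estimates and the Oseledets limits hold. Take $v\in O_i(\omega)\setminus\{0\}$ and decompose $v=v_R+v_N$ with $v_R=P(\omega)v$, $v_N=(\1-P(\omega))v$. Running a triangle-inequality argument forward in time, the exponentially dominant $\|\Phi(t,\omega)v_N\|$ overwhelms $\|\Phi(t,\omega)v_R\|$ whenever $v_N\ne 0$, forcing $\lambda_i\ge\gamma+\alpha$. Running the same argument backward in time, where $\alpha>0$ makes $e^{(\gamma-\alpha)t}$ dominate $e^{(\gamma+\alpha)t}$ as $t\to-\infty$, yields that $v_R\ne 0$ forces $\lambda_i\le\gamma-\alpha$. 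Since $v\ne 0$, at least one alternative holds, so $\lambda_i\notin(\gamma-\alpha,\gamma+\alpha)$; in particular $\lambda_i\ne\gamma$, which is (i). Moreover, $\lambda_i>\gamma$ is compatible only with the second alternative ($v_R=0$), hence $O_i(\omega)\subseteq\cN P(\omega)$; symmetrically $\lambda_i<\gamma$ gives $O_i(\omega)\subseteq\cR P(\omega)$.

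Summing the inclusions over $i\le k$ and $i>k$ yields $\bigoplus_{i\le k}O_i(\omega)\subseteq\cN P(\omega)$ and $\bigoplus_{i>k}O_i(\omega)\subseteq\cR P(\omega)$, and equality then follows from a dimension count, since both the Oseledets splitting and the dichotomy splitting give a direct-sum decomposition of $\R^d$. This establishes (ii). The step requiring most care is the derivation of the lower bounds from the dichotomy, where one must combine invariance of $P$ with the cocycle property to transport the estimate from $\theta_t\omega$ back to $\omega$; the remainder of the argument is a comparison of exponential rates and a dimension pigeonhole. A minor bookkeeping point is that the Oseledets conclusions and the dichotomy bounds each hold on full-measure subsets of $\Omega$, so one works throughout on their intersection.
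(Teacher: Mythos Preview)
Your argument is correct, and it is essentially dual to the paper's. Whereas the paper takes a vector $v\in\cR P(\omega)$ (resp.\ $\cN P(\omega)$), expands it in the Oseledets basis as $v=v_i+\dots+v_p$ with $v_i\neq 0$, and uses only the \emph{upper} dichotomy bounds together with the standard MET fact that the top Oseledets component governs the growth rate to conclude $\cR P(\omega)\subset\bigoplus_{j>k}O_j(\omega)$, you go in the opposite direction: you take $v\in O_i(\omega)$, split it along the dichotomy as $v=v_R+v_N$, and use the \emph{lower} bounds you derive from the dichotomy to show $O_i(\omega)\subset\cN P(\omega)$ or $O_i(\omega)\subset\cR P(\omega)$ according to the sign of $\lambda_i-\gamma$. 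Both routes finish with the same dimension count, but they establish the reverse inclusions of one another before that step.

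The trade-offs are minor. The paper avoids deriving lower dichotomy bounds but relies on the (standard, though not entirely trivial) fact that for a sum of Oseledets components the forward exponential rate is the largest $\lambda_j$ appearing. Your route avoids that fact entirely, at the cost of transporting the dichotomy estimate to $\theta_t\omega$ via invariance and the cocycle identity; this is exactly the manoeuvre the paper itself uses elsewhere (in showing $\cR(P_\gamma)=\cS^\gamma$ and $\cN(P_\gamma)=\cU^\gamma$), so no new machinery is needed. Your version also yields the slightly sharper quantitative statement $\lambda_i\notin(\gamma-\alpha,\gamma+\alpha)$ en route to~(i), while the paper proves~(i) by a separate short contradiction argument. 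The ``bookkeeping'' point you flag---working on a $\theta$-invariant full-measure set so that the estimates at $\theta_t\omega$ are available---is genuine but routine, and the paper treats it with the same informality.
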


\begin{proof}
  (i) Suppose to the contrary that $\gamma=\lambda_k$ for some $k\in\{1,\dots,p\}$. Because of the Multiplicative Ergodic Theorem, we have
  \begin{equation}\label{eqn1}
    \lim_{t\to\infty}\frac{1}{t}\ln\|\Phi(t,\omega)v\|=\lambda_k=\gamma\quad \fa v\in O_k(\omega)\setminus\{0\}\,.
  \end{equation}
  On the other hand, for all $v\in \cR P_\gamma(\omega)$ we get $\|\Phi(t,\omega)v\|\leq Ke^{(\gamma-\alpha)t}\|v\|$ for all $t\geq 0$. Thus,
  \begin{displaymath}
    \limsup_{t\to\infty}\frac{1}{t}\ln\|\Phi(t,\omega)v\|\leq \gamma-\alpha \fa  v\in \cR P(\omega)\,,
  \end{displaymath}
  which together with \eqref{eqn1} implies that $O_k(\omega)\cap \cR P(\omega)=\{0\}$. Similarly, using the fact that
  \begin{equation*}
    \lim_{t\to-\infty}\frac{1}{t}\ln\|\Phi(t,\omega)v\|=\lambda_k=\gamma \fa v\in O_k(\omega)\setminus\{0\}
  \end{equation*}
  and \defref{ED}, we obtain that $O_k(\omega)\cap \cN P(\omega)=\{0\}$. Consequently, $O_k(\omega)=\{0\}$ and it leads to a contradiction.

  (ii) Let $v\in \cR P(\omega)\setminus\{0\}$ be arbitrary. Then, according to Definition \ref{ED} and the definition of $k$ we obtain that
  \begin{equation}\label{Eq9}
  \lim_{t\to\infty}\frac{1}{t}\ln\|\Phi(t,\omega)v\|\leq \gamma-\alpha<\lambda_k.
  \end{equation}
  Now we write $v$ in the form $v=v_i+v_{i+1}+\dots+v_p$, where $i \in\set{1,\dots,p}$ with $v_i\not=0$ and $v_j\in O_j(\omega)$ for all $j=i,\dots,p$. Using the fact that for $j\in\set{i,\dots,p}$ with $v_j\not=0$
  \begin{displaymath}
  \lim_{t\to\infty}\frac{1}{t}\ln\|\Phi(t,\omega)v_j\|=\lambda_j\leq \lambda_i,
  \end{displaymath}
  we obtain that
  \begin{displaymath}
  \lim_{t\to\infty}\frac{1}{t}\ln\|\Phi(t,\omega)v\|=\lambda_i,
  \end{displaymath}
  which together with \eqref{Eq9} implies that $i\geq k+1$ and therefore $\cR P(\omega)\subset\bigoplus_{i=k+1}^{p}O_i(\omega)$. Similarly, we also get that $\cN P(\omega)\subset \bigoplus_{i=1}^{k}O_i(\omega)$. On the other hand,
  \begin{displaymath}
  \R^d=\cN P(\omega)\oplus \cR P(\omega)=\bigoplus_{i=1}^{k}O_i(\omega)\oplus \bigoplus_{i=k+1}^{p}O_i(\omega).
  \end{displaymath}
  Consequently, we have $\cR P(\omega)=\bigoplus_{i=k+1}^{p}O_i(\omega)$ and $\cN P(\omega)\subset \bigoplus_{i=1}^{k}O_i(\omega)$. The proof is complete.
\end{proof}

The monotonicity of the exponential function implies the following basic criteria for the existence of exponential dichotomies.

\begin{lemma}\label{lin.lemma4}
  Suppose that the linear random system $(\theta,\Phi)$ admits an exponential dichotomy with growth rate $\gamma$ and projector  $P_\gamma$. Then the following statements are fulfilled:
  \begin{itemize}
    \item[(i)]
    If $P_\gamma \equiv \1$ almost surely, then $(\theta,\Phi)$ admits an exponential dichotomy with growth rate $\zeta$ and invariant projector $P_\zeta \equiv \1$ for all $\zeta>\gamma$.
    \item[(ii)]
    If $P_\gamma \equiv 0$ almost surely, then $(\theta,\Phi)$ admits an exponential dichotomy with growth rate $\zeta$ and invariant projector $P_\zeta \equiv 0$ for all $\zeta<\gamma$.
  \end{itemize}
\end{lemma}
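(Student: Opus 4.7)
The plan is to use the fact that when the projector is the identity almost surely (or zero almost surely), one side of the dichotomy inequality becomes vacuous, and the other side can be adjusted by absorbing the change in growth rate into the constant $\alpha$.

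For part (i), I would first verify that $P_\zeta \equiv \1$ is a valid invariant projector: idempotence is immediate, and commutation with $\Phi(t,\omega)$ is trivial. The estimate $\|\Phi(t,\omega)(\1 - P_\zeta(\omega))\| \le K e^{(\zeta + \alpha)t}$ for $t \le 0$ then holds vacuously since $\1 - P_\zeta \equiv 0$. For the remaining bound, I would set $\alpha' := \alpha + (\zeta - \gamma)$, which is strictly positive since $\zeta > \gamma$ and $\alpha > 0$. The identity $\zeta - \alpha' = \gamma - \alpha$ combined with the hypothesis
\begin{displaymath}
  \|\Phi(t,\omega) P_\gamma(\omega)\| \le K e^{(\gamma - \alpha)t} \fa t \ge 0
\end{displaymath}
immediately yields $\|\Phi(t,\omega) P_\zeta(\omega)\| \le K e^{(\zeta - \alpha')t}$ for $t \ge 0$, which is exactly the required dichotomy estimate with growth rate $\zeta$ and projector $P_\zeta \equiv \1$.

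Part (ii) proceeds by the symmetric argument on the unstable side. Here $P_\zeta \equiv 0$ trivially defines an invariant projector, the stable-side inequality is vacuous, and setting $\alpha'' := \alpha + (\gamma - \zeta) > 0$ gives $\zeta + \alpha'' = \gamma + \alpha$, so the hypothesis $\|\Phi(t,\omega)(\1 - P_\gamma(\omega))\| \le K e^{(\gamma + \alpha)t}$ for $t \le 0$ translates directly into $\|\Phi(t,\omega)(\1 - P_\zeta(\omega))\| \le K e^{(\zeta + \alpha'')t}$ for $t \le 0$.

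There is no real obstacle: the proof is essentially bookkeeping with the exponent, and the phrase "monotonicity of the exponential function" in the statement already signals the triviality. The only subtlety worth flagging is that one must not try to keep the same $\alpha$ while shifting $\gamma$ to $\zeta$, since then the exponent $(\zeta - \alpha)t$ may fail to dominate $(\gamma - \alpha)t$ for large $t \ge 0$; it is precisely the freedom to enlarge the dichotomy constant $\alpha$ that makes the argument work.
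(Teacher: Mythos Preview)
Your proof is correct and matches the paper's approach; the paper in fact gives no proof at all beyond the sentence ``The monotonicity of the exponential function implies the following basic criteria,'' and your argument is just the explicit unpacking of that remark.

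One small correction to your closing commentary: the claim that ``one must not try to keep the same $\alpha$'' is backwards. For $\zeta > \gamma$ and $t \ge 0$ we have $e^{(\gamma - \alpha)t} \le e^{(\zeta - \alpha)t}$, so the original $\alpha$ already works --- this \emph{is} the monotonicity of the exponential that the paper invokes. Your device of enlarging $\alpha$ to $\alpha' = \alpha + (\zeta - \gamma)$ is valid (it simply reproduces the identity $\zeta - \alpha' = \gamma - \alpha$), but it is unnecessary, and the remark that the same-$\alpha$ bound would ``fail to dominate'' is incorrect.
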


Given $\gamma\in \R$, a function $g: \R \to \R^d$ is called \emph{$\gamma^+$-exponentially bounded} if $\sup_{t \in \R\cap[0,\infty)} \norm{g(t)}e^{-\gamma t} < \infty$. Accordingly, one says that a function $g: \R \to \R^d$ is \emph{$\gamma^-$-exponentially bounded} if $\sup_{t \in \R\cap(-\infty,0]}\,\norm{g(t)}e^{-\gamma t} < \infty$.

We define for all $\gamma \in\R$
\begin{displaymath}
  \cS^\gamma := \setb{(\omega, x) \in \Omega \times \R^d: \Phi(\cdot, \omega)x \text{ is } \gamma^+\text{-exponentially bounded}} \,,
\end{displaymath}
and
\begin{displaymath}
  \cU^\gamma := \setb{(\omega, x) \in \Omega \times \R^d: \Phi(\cdot, \omega)x \text{ is } \gamma^-\text{-exponentially bounded}} \,.
\end{displaymath}
It is obvious that $\cS^\gamma$ and $\cU^\gamma$ are linear invariant random sets of $(\theta,\vp)$, and given $\gamma \le \zeta$, the relations $\cS^\gamma \subset \cS^\zeta$ and $\cU^\gamma \supset \cU^\zeta$ are fulfilled.

The relationship between the projectors of exponential dichotomies with growth rate $\gamma$ and the sets $\cS^\gamma$ and $\cU^\gamma$ will now be discussed.

\begin{proposition}\label{lin.prop2}
  If the linear random dynamical system $(\theta,\Phi)$ admits an exponential dichotomy with growth rate $\gamma$ and projector $P_\gamma$, then $\cN(P_\gamma) = \cU^\gamma$ and $\cR(P_\gamma) = \cS^\gamma$ almost surely.
\end{proposition}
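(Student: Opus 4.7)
The plan is to prove each of the two set equalities by a two-sided inclusion argument, working on a full-measure set where the dichotomy bounds, the invariance identity $\Phi(t,\omega)P_\gamma(\omega) = P_\gamma(\theta_t\omega)\Phi(t,\omega)$, and invertibility of $\Phi(t,\omega)$ all hold simultaneously.

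The ``easy'' inclusions $\cR(P_\gamma) \subset \cS^\gamma$ and $\cN(P_\gamma) \subset \cU^\gamma$ are immediate: if $x \in \cR P_\gamma(\omega)$ then $\Phi(t,\omega)x = \Phi(t,\omega)P_\gamma(\omega)x$, so \defref{ED} gives $\|\Phi(t,\omega)x\| \le Ke^{(\gamma-\alpha)t}\|x\|$ for $t\ge 0$, which even beats the $\gamma^+$-bound; the kernel case is symmetric on $t\le 0$. The main work is in the reverse inclusions. The key observation is that invariance plus the cocycle identity turn the backward dichotomy estimate into a forward \emph{lower} bound on $\cN P_\gamma$: for any $y\in\cN P_\gamma(\omega)$ and $t\ge 0$, one has $\Phi(t,\omega)y \in \cN P_\gamma(\theta_t\omega)$, hence
\begin{displaymath}
  \|y\| = \|\Phi(-t,\theta_t\omega)\Phi(t,\omega)y\| \le Ke^{-(\gamma+\alpha)t}\|\Phi(t,\omega)y\|,
\end{displaymath}
i.e.\ $\|\Phi(t,\omega)y\| \ge K^{-1}e^{(\gamma+\alpha)t}\|y\|$. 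Symmetrically, for $x\in \cR P_\gamma(\omega)$ and $t\le 0$ one obtains $\|\Phi(t,\omega)x\| \ge K^{-1}e^{(\gamma-\alpha)t}\|x\|$.

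To conclude $\cS^\gamma \subset \cR(P_\gamma)$, take $(\omega,x) \in \cS^\gamma$, so $\|\Phi(t,\omega)x\| \le M e^{\gamma t}$ for $t\ge 0$ and some $M>0$. Split $x = P_\gamma(\omega)x + (\1-P_\gamma(\omega))x$, apply the forward lower bound to $y := (\1-P_\gamma(\omega))x$ and the forward upper bound to $P_\gamma(\omega)x$:
\begin{displaymath}
  K^{-1}e^{(\gamma+\alpha)t}\|(\1-P_\gamma(\omega))x\|
    \le \|\Phi(t,\omega)(\1-P_\gamma(\omega))x\|
    \le \|\Phi(t,\omega)x\| + Ke^{(\gamma-\alpha)t}\|x\|
    \le Me^{\gamma t} + Ke^{(\gamma-\alpha)t}\|x\|.
\end{displaymath}
Dividing by $e^{(\gamma+\alpha)t}$ and letting $t\to\infty$ (using $\alpha>0$) forces $(\1-P_\gamma(\omega))x = 0$, i.e.\ $x \in \cR P_\gamma(\omega)$. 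The inclusion $\cU^\gamma \subset \cN(P_\gamma)$ follows by the entirely symmetric argument with $t\to -\infty$.

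I do not expect any serious obstacle here: the whole argument is the standard dichotomy splitting, and the only subtle point is the conversion of the backward decay estimate on $\cN P_\gamma$ into a forward exponential growth estimate via the cocycle property and projector invariance. Once that conversion is set up, the spectral gap $\alpha>0$ forces the unwanted components of any exponentially bounded solution to vanish.
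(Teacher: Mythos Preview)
Your proof is correct and follows essentially the same route as the paper: both use the cocycle identity together with projector invariance to convert the one-sided dichotomy decay into a growth lower bound on the complementary subspace, and then let the spectral gap $\alpha>0$ kill the unwanted component. The only cosmetic difference is that the paper bounds the range component $x_1$ directly via $\|x_1\| = \|\Phi(-t,\theta_t\omega)P_\gamma(\theta_t\omega)\Phi(t,\omega)x\| \le Ke^{-(\gamma-\alpha)t}\|\Phi(t,\omega)x\|$, avoiding your triangle-inequality step, but this is a matter of bookkeeping rather than a different idea.
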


\begin{proof}
  Suppose that $(\theta,\Phi)$ admits an exponential dichotomy with growth rate $\gamma$, constants $\alpha$, $K$ and projector  $P_\gamma$. This means that for almost all $\omega\in\Omega$, one has
  \begin{eqnarray*}
    \norm{\Phi(t,\omega)P_\gamma(\omega)}
      \minequ{\le}
    Ke^{(\gamma-\alpha) t} \fa t\ge 0\,,\\
    \norm{\Phi(t,\omega)(\1-P_\gamma(\omega))}
      \minequ{\le}
    K e^{(\gamma+\alpha)t} \fa t\le 0\,.
  \end{eqnarray*}
  We now prove the relation $\cN(P_\gamma) = \cU^\gamma$.
  \\
  ($\supseteq$) Choose $(\omega, x) \in \cU^\gamma$ arbitrarily. This implies $\norm{\Phi(t,\omega)x} \le Ce^{\gamma t}$ for all $t \le 0$ with some real constant $C>0$. Write $x = x_1+x_2$ with $x_1 \in \cR P_\gamma(\omega)$ and $x_2 \in \cN P_\gamma(\omega)$. Hence, for all $t\le 0$,
  \begin{align*}
    \norm{x_1}
      &=
    \norm{\Phi(-t, \theta_t\omega)\Phi(t,\omega)P_\gamma(\omega)x}
      =
    \norm{\Phi(-t, \theta_t\omega)P_\gamma(\theta_t\omega)\Phi(t,\omega)x}\\
      &\le
    K e^{-(\gamma-\alpha)t}\norm{\Phi(t,\omega)x}
      \le
    C K e^{-(\gamma-\alpha)t}e^{\gamma t} =C K e^{\alpha t}\,.
  \end{align*}
  The right hand side of this inequality converges to zero in the limit $t \to -\infty$. This implies $x_1=0$, and thus,   $(\omega,x)\in\cN(P_\gamma)$.
  \\
  ($\subseteq$) Choose $(\omega, x) \in \cN(P_\gamma)$. Thus, for all $t \le 0$, the relation $\norm{\Phi(t,\omega)x} \le K
  e^{(\gamma+\alpha)t}\norm{x}$ is fulfilled. This means that $\Phi(\cdot,\omega)x$ is $\gamma^-$-exponentially bounded.
  \\
  The proof of statement concerning the range of the projector is treated analogously.
\end{proof}

\section{The dichotomy spectrum for random dynamical systems}\label{lin.sec1}

We introduce the dichotomy spectrum for random dynamical systems in this section. For the definition of the dichotomy spectra, it is crucial for which growth rates, a linear random dynamical system $(\theta,\Phi)$ admits an exponential dichotomy. The growth rates $\gamma = \pm \infty$ are not excluded from our considerations;  in particular, one says that $(\theta,\Phi)$ admits an exponential dichotomy with growth rate $\infty$ if there exists a $\gamma \in \R$ such that $(\theta,\Phi)$ admits an exponential dichotomy with growth rate $\gamma$ and projector $P_\gamma \equiv \1$. Accordingly, one says that $(\theta,\Phi)$ admits an exponential dichotomy with growth rate $-\infty$ if there exists a $\gamma \in \R$ such that $(\theta,\Phi)$ admits an exponential dichotomy with growth rate $\gamma$ and projector $P_\gamma \equiv 0$.

\begin{definition}[Dichotomy spectrum]\label{def_lin_3}
  Consider the linear random dynamical system $(\theta,\Phi)$. Then the \emph{dichotomy spectrum} $(\theta,\Phi)$ is defined by
  \begin{displaymath}
    \Sigma
      :=
    \setb{\gamma \in \Rq: (\theta,\Phi)\text{ does not admit an exponential dichotomy with growth rate }\gamma}\,.
  \end{displaymath}
  The corresponding \emph{resolvent sets} is defined by $\rho := \Rq \setminus \Sigma$.
\end{definition}

The aim of the following lemma is to analyze the topological structure of the resolvent sets.

\begin{lemma}\label{lin.lemma2}
  Consider the resolvent set $\rho$ of a linear random dynamical system $(\theta,\Phi)$. Then $\rho \cap \R$ is open. More precisely, for all $\gamma \in \rho\cap \R$, there exists an $\eps >0$ such that $B_\eps(\gamma) \subset \rho$. Furthermore, the relation $\rk P_\zeta = \rk P_\gamma$ is (almost surely) fulfilled for all $\zeta \in B_\eps(\gamma)$ and every invariant projector $P_\gamma$ and $P_\zeta$ of the exponential dichotomies of $(\theta,\Phi)$ with growth rates $\gamma$ and $\zeta$, respectively.
\end{lemma}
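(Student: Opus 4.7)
The strategy is to show that the \emph{same} invariant projector that witnesses the exponential dichotomy at $\gamma$ also witnesses an exponential dichotomy at every nearby growth rate $\zeta$, with a slightly reduced dichotomy exponent. Concretely, suppose $(\theta,\Phi)$ admits an exponential dichotomy with growth rate $\gamma$, constants $\alpha>0$, $K\ge 1$, and projector $P_\gamma$. I would set $\eps:=\alpha$ and claim that for every $\zeta\in B_\eps(\gamma)$ the quadruple $(\zeta,\,\alpha-|\zeta-\gamma|,\,K,\,P_\gamma)$ again satisfies the two inequalities of \defref{ED}. This immediately gives $B_\eps(\gamma)\subset\rho$.

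The key step is a simple monotonicity computation. Writing $\zeta=\gamma+\delta$ with $|\delta|<\alpha$ and $\alpha':=\alpha-|\delta|>0$, one verifies the elementary inequalities $\gamma-\alpha\le\zeta-\alpha'$ and $\gamma+\alpha\ge\zeta+\alpha'$. For $t\ge 0$ the first gives $e^{(\gamma-\alpha)t}\le e^{(\zeta-\alpha')t}$; for $t\le 0$ the second, combined with the sign of $t$, gives $e^{(\gamma+\alpha)t}\le e^{(\zeta+\alpha')t}$. Substituting into the bounds already satisfied by $P_\gamma$ yields the dichotomy inequalities at growth rate $\zeta$ with constants $\alpha'$, $K$ and projector $P_\gamma$.

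For the rank statement, I would invoke \propref{lin.prop2}. For an arbitrary projector $P_\zeta$ of an exponential dichotomy at $\zeta$, that proposition gives $\cR(P_\zeta)=\cS^\zeta$ almost surely. Applying the same proposition with $P_\gamma$ in place of $P_\zeta$, which is legitimate by the first part of the proof, gives $\cR(P_\gamma)=\cS^\zeta$ almost surely. Hence $\cR P_\zeta(\omega)=\cR P_\gamma(\omega)$ for almost every $\omega$, and equality of these as subspaces of $\R^d$ forces $\rk P_\zeta(\omega)=\rk P_\gamma(\omega)$ almost surely.

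The entire argument reduces to a monotonicity observation combined with the already-established identification $\cR(P_\gamma)=\cS^\gamma$, so I do not anticipate any substantive obstacle. The only point requiring care is that the two dichotomy bounds pull in opposite directions with respect to the sign of $\zeta-\gamma$: the stable estimate allows $\alpha'\le\alpha+\delta$ and the unstable estimate allows $\alpha'\le\alpha-\delta$, so the admissible exponent is $\alpha-|\delta|$, which is precisely what fixes the radius $\eps=\alpha$ of the neighbourhood.
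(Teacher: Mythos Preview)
Your proposal is correct and follows essentially the same route as the paper: you reuse the projector $P_\gamma$ at nearby growth rates by absorbing the perturbation $|\zeta-\gamma|$ into the dichotomy exponent. The only cosmetic differences are that the paper takes $\eps=\tfrac{1}{2}\alpha$ with a uniform new exponent $\tfrac{\alpha}{2}$ (rather than your sharper $\eps=\alpha$ with the $\zeta$-dependent exponent $\alpha-|\zeta-\gamma|$), and the paper leaves the rank conclusion implicit whereas you spell it out via \propref{lin.prop2}; neither difference is substantive.
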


\begin{proof}
  Choose $\gamma \in \rho$ arbitrarily. Since $(\theta,\Phi)$ admits an exponential dichotomy with growth rate $\gamma$, there exist an invariant projector $P_\gamma$ and constants $\alpha>0$, $K\ge 1$ such that for almost all $\omega\in\Omega$, one has
  \begin{alignat*}{2}
    \norm{\Phi(t,\omega)P_\gamma(\omega)} &\le Ke^{(\gamma-\alpha) t} &\fa t\ge 0\,,\\
    \norm{\Phi(t,\omega)(\1-P_\gamma(\omega))} &\le K e^{(\gamma+\alpha)t} &\fa t\le 0 \,.
  \end{alignat*}
  Set $\eps := \frac{1}{2}\alpha$, and choose $\zeta \in B_\eps(\gamma)$. It follows that for almost all $\omega\in\Omega$,
  \begin{alignat*}{2}
    \norm{\Phi(t,\omega)P_\gamma(\omega)} &\le  Ke^{(\zeta-\frac{\alpha}{2})t}&\fa& t\ge 0 \,,\\
    \norm{\Phi(t,\omega)(\1-P_\gamma(\omega))} &\le K e^{(\zeta+\frac{\alpha}{2})t} &\fa& t\le 0 \,.
  \end{alignat*}
  This yields $\zeta \in \rho$, and it follows that $\rk P_\zeta = \rk P_\gamma$ for any projector $P_\zeta$ of the exponential dichotomy with growth rate $\zeta$. This finishes the proof of this lemma.
\end{proof}

\begin{lemma}\label{lin.lemma3}
  Consider the resolvent set $\rho$ of a linear random dynamical system $(\theta,\Phi)$, and let $\gamma_1,\gamma_2 \in \rho \cap \R$ such that $\gamma_1<\gamma_2$. Moreover, choose invariant projectors $P_{\gamma_1}$ and $P_{\gamma_2}$ for the corresponding exponential dichotomies with growth rates $\gamma_1$ and $\gamma_2$. Then the relation $\rk P_{\gamma_1} \le \rk P_{\gamma_2}$ holds. In addition, $[\gamma_1,\gamma_2] \subset \rho$ is fulfilled if and only if $\rk P_{\gamma_1} = \rk P_{\gamma_2}$.
\end{lemma}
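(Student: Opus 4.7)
The plan is to deduce both parts from \propref{lin.prop2}, the monotonicity $\cS^\gamma \subset \cS^\zeta$ and $\cU^\gamma \supset \cU^\zeta$ for $\gamma \le \zeta$, together with \lemmaref{lin.lemma2}.

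For part (i), \propref{lin.prop2} identifies $\cR(P_{\gamma_i}) = \cS^{\gamma_i}$ almost surely. Since $\gamma_1 < \gamma_2$ implies $\cS^{\gamma_1} \subset \cS^{\gamma_2}$, one gets $\cR P_{\gamma_1}(\omega) \subset \cR P_{\gamma_2}(\omega)$ for almost all $\omega$, and taking dimensions yields $\rk P_{\gamma_1} \le \rk P_{\gamma_2}$.

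For the forward direction of part (ii), assume $[\gamma_1, \gamma_2] \subset \rho$. By \lemmaref{lin.lemma2}, every $\gamma \in [\gamma_1, \gamma_2]$ has an open neighbourhood on which the rank of the dichotomy projector is (almost surely) constant. Compactness of $[\gamma_1, \gamma_2]$ gives a finite cover, and a chaining argument along overlapping neighbourhoods forces the rank to be constant on the entire interval, hence $\rk P_{\gamma_1} = \rk P_{\gamma_2}$.

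For the reverse direction, assume $\rk P_{\gamma_1} = \rk P_{\gamma_2}$. Using part (i) together with the equality of (finite) dimensions, the inclusion $\cR P_{\gamma_1}(\omega) \subset \cR P_{\gamma_2}(\omega)$ upgrades to equality almost surely; by \propref{lin.prop2} applied to the null spaces (via $\cN(P_{\gamma_i}) = \cU^{\gamma_i}$ and $\cU^{\gamma_1} \supset \cU^{\gamma_2}$), the same argument on complementary dimensions gives $\cN P_{\gamma_1}(\omega) = \cN P_{\gamma_2}(\omega)$ almost surely. Since a projector is determined by its range and null space, we conclude $P_{\gamma_1} = P_{\gamma_2} =: P$ almost surely. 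Now for any $\zeta \in [\gamma_1, \gamma_2]$, set $\alpha' := \min\set{\alpha + \zeta - \gamma_1, \alpha + \gamma_2 - \zeta} > 0$. Using the dichotomy estimate at $\gamma_1$ for $t \ge 0$ and at $\gamma_2$ for $t \le 0$, and exploiting that $t \ge 0$ makes $(\gamma_1 - \alpha)t \le (\zeta - \alpha')t$ while $t \le 0$ makes $(\gamma_2 + \alpha)t \le (\zeta + \alpha')t$, one obtains
\begin{displaymath}
  \norm{\Phi(t,\omega)P(\omega)} \le K e^{(\zeta - \alpha')t} \fa t \ge 0, \qquad \norm{\Phi(t,\omega)(\1 - P(\omega))} \le K e^{(\zeta + \alpha')t} \fa t \le 0\,,
\end{displaymath}
almost surely, so $(\theta,\Phi)$ admits an exponential dichotomy with growth rate $\zeta$ and projector $P$, i.e.\ $\zeta \in \rho$.

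The only non-routine point is the observation that equality of ranks forces the two projectors to actually coincide almost surely; once this is in hand, the same projector simultaneously implements the dichotomy at every intermediate rate $\zeta$, and the estimates from $\gamma_1$ and $\gamma_2$ slot together by sign considerations on $t$. The compactness/chaining argument for the forward direction is standard.
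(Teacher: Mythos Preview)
Your proof is correct and follows essentially the same route as the paper: the rank inequality via \propref{lin.prop2} and the inclusions $\cS^{\gamma_1}\subset\cS^{\gamma_2}$, the forward implication via local constancy of the rank from \lemmaref{lin.lemma2} (the paper uses a supremum/connectedness argument instead of your compactness-and-chaining, but these are interchangeable), and the reverse implication by upgrading equal ranks to $P_{\gamma_1}=P_{\gamma_2}$ and combining the $t\ge 0$ estimate at $\gamma_1$ with the $t\le 0$ estimate at $\gamma_2$. One cosmetic point: you write a single $\alpha$ and $K$ as if the two dichotomies share constants; the paper first passes to $K=\max\{K_1,K_2\}$ and $\alpha=\min\{\alpha_1,\alpha_2\}$, and you should do the same (after which your choice of $\alpha'$, or the paper's simpler choice $\alpha'=\alpha$, works).
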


\begin{proof}
  The relation $\rk P_{\gamma_1} \le \rk P_{\gamma_2}$ is a direct consequence of \propref{lin.prop2}, since $\cS^{\gamma_1} \subset \cS^{\gamma_2}$ and $\cU^{\gamma_1} \supset \cU^{\gamma_2}$. Assume now that $[\gamma_1,\gamma_2] \subset \rho$. Arguing contrapositively, suppose that $\rk P_{\gamma_1} \not= \rk P_{\gamma_2}$, and choose invariant projectors $P_\gamma$, $\gamma \in (\gamma_1,\gamma_2)$, for the exponential dichotomies of $(\theta,\Phi)$ with growth rate $\gamma$. Define
  \begin{displaymath}
    \zeta_0
    :=
    \sup\,\setb{\zeta \in [\gamma_1,\gamma_2]: \rk P_\zeta \not= \rk P_{\gamma_2}}\,.
  \end{displaymath}
  Due to \lemmaref{lin.lemma2}, there exists an $\eps >0$ such that $\rk P_{\zeta_0} = \rk P_\zeta$ for all $\zeta \in B_\eps(\zeta_0)$. This is a contradiction to the definition of $\zeta_0$. Conversely, let $\rk P_{\gamma_1} = \rk P_{\gamma_2}$. Because of $\rk P_{\gamma_1} = \rk P_{\gamma_2}$, \propref{lin.prop2} yields $\cN(P_{\gamma_1}) = \cN(P_{\gamma_2})$ almost surely, and $P_{\gamma_2}$ is an invariant projector of the exponential dichotomy with growth rate $\gamma_1$.  Thus, one obtains for almost all $\omega\in\Omega$,
  \begin{displaymath}
    \norm{\Phi(t,\omega) P_{\gamma_2}(\omega)} \le K_1 e^{(\gamma_1 - \alpha_1)t} \fa t\ge 0
  \end{displaymath}
  for some $K_1\ge 1$ and $\alpha_1 >0$. $P_{\gamma_2}$ is also projector of the exponential dichotomy on $\R_0^-$ with growth rate $\gamma_2$. Hence, for almost all $\omega\in\Omega$, one gets
  \begin{displaymath}
    \normb{\Phi(t,\omega) (\1-P_{\gamma_2}(\omega))} \le K_2 e^{(\gamma_2 + \alpha_2)t} \fa t\le0
  \end{displaymath}
  with some $K_2\ge 1$ and $\alpha_2 >0$. For all $\gamma \in [\gamma_1, \gamma_2]$, these two inequalities imply by setting $K:= \max\,\set{K_1, K_2}$ and $\alpha:= \min \,\set{\alpha_1, \alpha_2}$ that for almost all $\omega\in\Omega$,
  \begin{alignat*}{2}
    \norm{\Phi(t,\omega) P_{\gamma_2}(\omega)} &\le  K e^{(\gamma - \alpha)t} &\fa& t\ge0\,,\\
    \norm{\Phi(t,\omega)(\1-P_{\gamma_2}(\omega))} &\le K e^{(\gamma + \alpha)t} &\fa&  t\le0\,.
  \end{alignat*}
  This means that $\gamma \in \rho$, and thus, $[\gamma_1,\gamma_2] \subset \rho$.
\end{proof}

For an arbitrarily chosen $a \in \R$, define
\begin{displaymath}
  [-\infty, a]:=(-\infty,a] \cup \set{-\infty}\,,\quad\quad\quad   [a, \infty]  := [a, \infty) \cup \set{\infty}
\end{displaymath}
and
\begin{displaymath}
[-\infty,-\infty]:= \set{-\infty}, \quad\quad\  [\infty,\infty]:= \set{\infty}, \quad\quad \quad [-\infty,\infty] := \Rq\,.
\end{displaymath}

The following \emph{Spectral Theorem}, describes that the dichotomy spectrum consists of at least one and at most $d$ closed intervals.

\begin{theorem}[Spectral Theorem]\label{lin.theo1}
  Let $(\theta, \Phi)$ be a linear random dynamical system with dichotomy spectrum $\Sigma$. Then there exists an $n \in \set{1,\dots,d}$ such that
  \begin{displaymath}
    \Sigma = [a_1, b_1] \cup \dots \cup [a_n,b_n]
  \end{displaymath}
  with $-\infty \le a_1 \le b_1< a_2 \le b_2 <\dots<a_n\le b_n\le \infty$.
\end{theorem}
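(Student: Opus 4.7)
The plan is to identify $\Sigma = \Rq \setminus \rho$ and count the connected components of $\rho$ (and hence of $\Sigma$) by tracking the rank of dichotomy projectors. First, by \propref{lin.prop2} the range and null space of any invariant projector $P_\gamma$ of a dichotomy at growth rate $\gamma \in \rho \cap \R$ coincide almost surely with the intrinsic linear random sets $\cS^\gamma$ and $\cU^\gamma$, so $\rk P_\gamma$ is independent of the particular projector chosen. This yields a well-defined map $r : \rho \cap \R \to \{0,1,\dots,d\}$ given by $r(\gamma) := \rk P_\gamma$.

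The structural lemmas already in place constrain $r$ sharply. \lemmaref{lin.lemma2} gives openness of $\rho \cap \R$ in $\R$ together with local constancy of $r$, so $r$ is constant on each connected component of $\rho \cap \R$. \lemmaref{lin.lemma3} provides monotonicity ($r(\gamma_1) \le r(\gamma_2)$ whenever $\gamma_1 < \gamma_2$ both lie in $\rho$) together with the gap-filling criterion $[\gamma_1,\gamma_2] \subset \rho \iff r(\gamma_1) = r(\gamma_2)$. Consequently, distinct connected components of $\rho \cap \R$ carry strictly increasing rank values.

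Next, I extend the analysis to the extended points $\pm\infty$ using \lemmaref{lin.lemma4}. If some $\gamma \in \rho \cap \R$ satisfies $r(\gamma) = d$, then $P_\gamma \equiv \1$ almost surely, and the lemma propagates the dichotomy to all $\zeta > \gamma$ with the same projector, placing $[\gamma,\infty] \subset \rho$; conversely, $\infty \in \rho$ forces the rightmost component of $\rho \cap \R$ to have rank $d$. Symmetrically, $-\infty \in \rho$ if and only if the leftmost component of $\rho \cap \R$ has rank $0$. In particular $\rho$ is open in $\Rq$, $\Sigma$ is closed in $\Rq$, and its connected components are precisely the maximal closed intervals of $\Rq$ disjoint from $\rho$.

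Finally, I enumerate the components $I_1, \dots, I_m$ of $\rho$ from left to right with strictly increasing ranks $r_1 < \dots < r_m$ drawn from $\{0,1,\dots,d\}$, so $m \le d+1$. The components of $\Sigma$ are then the $m-1$ closed intervals sandwiched between consecutive $I_j$'s, augmented by an unbounded leftmost interval $[-\infty,a_1]$ exactly when $r_1 \ge 1$ and an unbounded rightmost interval $[b_n,\infty]$ exactly when $r_m \le d-1$. A short case analysis on whether $r_1 = 0$ and whether $r_m = d$ shows that in every case the total count $n$ is at most $d$. Non-emptiness $n \ge 1$ follows because $\rho = \Rq$ would force a single component of constant rank, yet $-\infty \in \rho$ forces rank $0$ while $\infty \in \rho$ forces rank $d \ge 1$, a contradiction. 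The main point requiring care is the third step --- pinning down that $\pm\infty$ sit cleanly inside $\rho$ whenever the relevant projector is trivial --- but \lemmaref{lin.lemma4} handles it directly, after which the counting in the last step is mechanical.
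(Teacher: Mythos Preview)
Your argument is correct and follows essentially the same route as the paper: both proofs hinge on the openness of $\rho\cap\R$ from \lemmaref{lin.lemma2}, the rank monotonicity and gap-filling characterization from \lemmaref{lin.lemma3}, and the handling of $\pm\infty$ via \lemmaref{lin.lemma4}. The only cosmetic difference is that the paper bounds $n\le d$ by contradiction (assuming $d+1$ spectral intervals forces $d$ gap points with strictly increasing ranks in $\{0,\dots,d\}$, so an endpoint rank is $0$ or $d$, contradicting the presence of spectrum beyond), whereas you count the components of $\rho$ directly and do a small case split on whether the extremal ranks equal $0$ or $d$; these are the same idea packaged slightly differently. One minor omission: your enumeration $I_1,\dots,I_m$ tacitly assumes $m\ge 1$, so you should note separately that $\rho=\emptyset$ gives $\Sigma=[-\infty,\infty]$ and hence $n=1$.
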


\begin{proof}
  Due to \lemmaref{lin.lemma2}, the resolvent set $\rho \cap \R$ is open. Thus, $\Sigma \cap \R$ is the disjoint union of closed intervals. The relation $(-\infty, b_1] \subset \Sigma$ implies $[-\infty, b_1] \subset \Sigma$, because the assumption of the existence of a $\gamma \in \R$ such that $(\theta,\vp)$ admits an exponential dichotomy with growth rate $\gamma$ and projector $P_\gamma \equiv 0$ leads to $(-\infty,\gamma] \subset \rho$ using \lemmaref{lin.lemma4}, and this is a contradiction. Analogously, it follows from $[a_n, \infty) \subset \Sigma$ that $[a_n, \infty] \subset \Sigma$. To show the relation $n \le d$,  assume to the contrary that $n \ge d+1$. Thus, there exist
  \begin{displaymath}
    \zeta_1<\zeta_2<\dots<\zeta_d \in \rho
  \end{displaymath}
  such that the $d+1$ intervals $(-\infty, \zeta_1)\,,\, (\zeta_1, \zeta_2)\,,\, \dots,\, (\zeta_d, \infty)$ have nonempty intersection with the spectrum $\Sigma$. It follows from Lemma \ref{lin.lemma3} that
  \begin{displaymath}
    0 \le \rk P_{\zeta_1} < \rk P_{\zeta_2} < \dots < \rk P_{\zeta_d} \le d
  \end{displaymath}
  is fulfilled for invariant projectors $P_{\zeta_i}$ of the exponential dichotomy with growth rate $\zeta_i$, $i\in \set{1,\dots,n}$. This implies either $\rk P_{\zeta_1} =0$ or $\rk P_{\zeta_d} =d$. Thus, either
  \begin{displaymath}
    [-\infty, \zeta_1] \cap \Sigma = \emptyset \qorq [\zeta_d, \infty] \cap \Sigma = \emptyset
  \end{displaymath}
  is fulfilled, and this is a contradiction. To show $n\ge 1$, assume that $\Sigma= \emptyset$. This implies $\set{-\infty, \infty} \subset \rho$. Thus, there exist $\zeta_1, \zeta_2 \in \R$ such that $(\theta,\vp)$ admits an exponential dichotomy with growth rate $\zeta_1$ and projector $P_{\zeta_1} \equiv 0$ and an exponential dichotomy with growth rate $\zeta_2$ and projector $P_{\zeta_2} \equiv \1$. Applying \lemmaref{lin.lemma3}, one gets $(\zeta_1, \zeta_2) \cap \Sigma \not= \emptyset$. This contradiction yields $n\ge 1$ and finishes the proof of the theorem.
\end{proof}

Each spectral interval is associated to a so-called spectral manifold, which generalises the stable and unstable manifolds obtained by the ranges and null spaces of invariant projectors of exponential dichotomies.

\begin{theorem}[Spectral manifolds]\label{lin.theo6}
  Consider the dichotomy spectrum
  \begin{displaymath}
    \Sigma = [a_1, b_1] \cup\dots\cup [a_n,b_n]
  \end{displaymath}
  of the linear random dynamical system $(\theta, \Phi)$ and define the invariant projectors $P_{\gamma_0} := 0$, $P_{\gamma_n}:=  \1$, and for $i\in\set{1,\dots,n-1}$, choose $\gamma_i \in (b_i,a_{i+1})$ and projectors $P_{\gamma_i}$ of the nonhyperbolic exponential dichotomy of $(\theta,\vp)$ with growth rate $\gamma_i$. Then the sets
  \begin{displaymath}
    \cW_i := \cR(P_{\gamma_i}) \cap \cN(P_{\gamma_{i-1}}) \fa i \in \set{1,\dots,n}
  \end{displaymath}
  are fiber-wise linear subset of $\R^d$, the so-called \emph{spectral manifolds}, such that
  \begin{displaymath}
    \cW_1 \oplus \dots \oplus \cW_n = \Omega \times \R^d
  \end{displaymath}
  and $\cW_i \not= \Omega \times \set{0}$ for $i \in \set{1,\dots,n}$.
\end{theorem}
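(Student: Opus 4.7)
The plan is to reduce the theorem to two ingredients: a chain of almost-sure inclusions
\[
  \Omega\times\set{0} = \cR(P_{\gamma_0}) \subset \cR(P_{\gamma_1}) \subset \dots \subset \cR(P_{\gamma_n}) = \Omega\times\R^d
\]
between the ranges of the chosen projectors, and a strict rank jump across each spectral component $[a_i,b_i]$. The inclusions follow from \propref{lin.prop2}, which identifies $\cR(P_{\gamma_i})$ almost surely with the $\gamma_i^+$-bounded set $\cS^{\gamma_i}$, together with the monotonicity $\cS^{\gamma_{i-1}}\subset\cS^{\gamma_i}$ remarked on after the definition of $\cS^\gamma$; the endpoint inclusions are built into the conventions $P_{\gamma_0}=0$ and $P_{\gamma_n}=\1$. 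Fiber-wise linearity of each $\cW_i$ is then immediate because the intersection of two linear random sets is a linear random set.

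Next I would establish the cumulative decomposition
\[
  \cR P_{\gamma_i}(\omega) = \cR P_{\gamma_{i-1}}(\omega) \oplus \cW_i(\omega) \fa i\in\set{1,\dots,n}
\]
for almost every $\omega$. Given $v\in\cR P_{\gamma_i}(\omega)$, I split $v = P_{\gamma_{i-1}}(\omega)v + (\1-P_{\gamma_{i-1}}(\omega))v$. The first summand lies in $\cR P_{\gamma_{i-1}}(\omega)$, hence also in $\cR P_{\gamma_i}(\omega)$ by the nesting above; the second lies in $\cN P_{\gamma_{i-1}}(\omega)$ by construction and in $\cR P_{\gamma_i}(\omega)$ as a difference of two elements of that subspace, so it belongs to $\cW_i(\omega)$. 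Uniqueness follows from $\cR P_{\gamma_{i-1}}(\omega)\cap\cN P_{\gamma_{i-1}}(\omega)=\set{0}$. Telescoping from $i=1$ to $i=n$ and using the conventions at the endpoints then yields $\cW_1(\omega)\oplus\dots\oplus\cW_n(\omega)=\R^d$ on a set of full measure (the intersection of finitely many full-measure sets).

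It remains to verify $\cW_i\neq\Omega\times\set{0}$ for every $i$. For $i\in\set{2,\dots,n-1}$, both $\gamma_{i-1}$ and $\gamma_i$ lie in $\rho\cap\R$ and the interval $[\gamma_{i-1},\gamma_i]$ contains the spectral component $[a_i,b_i]$, so $[\gamma_{i-1},\gamma_i]\not\subset\rho$; \lemmaref{lin.lemma3} then forces $\rk P_{\gamma_{i-1}}<\rk P_{\gamma_i}$, and the direct-sum decomposition renders $\cW_i$ nontrivial. The hard part will be the infinite endpoints, to which \lemmaref{lin.lemma3} does not apply directly. If $\cW_1=\cR(P_{\gamma_1})$ were trivial, then $P_{\gamma_1}\equiv 0$ almost surely; \lemmaref{lin.lemma4}(ii) would extend the exponential dichotomy to every $\zeta<\gamma_1$, placing $[-\infty,\gamma_1]\subset\rho$ and contradicting $[a_1,b_1]\subset\Sigma\cap[-\infty,\gamma_1]$. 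The case $i=n$ is symmetric via \lemmaref{lin.lemma4}(i), and the degenerate case $n=1$ falls out directly from $\cW_1=\cR(\1)\cap\cN(0)=\Omega\times\R^d$.
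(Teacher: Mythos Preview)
Your proof is correct and covers all cases, but the route to the Whitney sum is organised differently from the paper's. You proceed via the nested flag $\cR(P_{\gamma_0})\subset\cR(P_{\gamma_1})\subset\dots\subset\cR(P_{\gamma_n})$, using \propref{lin.prop2} and the monotonicity of $\cS^\gamma$ to justify the inclusions, and then telescope the fiberwise splittings $\cR P_{\gamma_i}(\omega)=\cR P_{\gamma_{i-1}}(\omega)\oplus\cW_i(\omega)$ to obtain the decomposition of $\R^d$; the dimension formula $\dim\cW_i=\rk P_{\gamma_i}-\rk P_{\gamma_{i-1}}$ drops out as a byproduct and immediately gives nontriviality for the interior indices via \lemmaref{lin.lemma3}. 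The paper instead first verifies pairwise disjointness $\cW_i\cap\cW_j\subset\cR(P_{\gamma_i})\cap\cN(P_{\gamma_i})=\Omega\times\set{0}$ and then proves the sum $\cW_1+\dots+\cW_n=\Omega\times\R^d$ by an induction on the complementary side, starting from $\Omega\times\R^d=\cW_1+\cN(P_{\gamma_1})$ and repeatedly applying the modular law $E\cap(F+G)=(E\cap F)+G$ for $E\supset G$; nontriviality in the interior range is established by the inclusion--exclusion count $\dim\cW_i=\rk P_{\gamma_i}+d-\rk P_{\gamma_{i-1}}-\dim(\cR(P_{\gamma_i})+\cN(P_{\gamma_{i-1}}))\ge 1$. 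The endpoint cases $i=1$ and $i=n$ are handled identically in both arguments via \lemmaref{lin.lemma4}. Your filtration argument is arguably the more transparent of the two, since it avoids the modular-law manipulation and delivers the exact dimension of each $\cW_i$ rather than a lower bound.
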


\begin{proof}
  The sets $\cW_1,\dots,\cW_n$ obviously have linear fibers. Suppose that there exists an $i \in \set{1,\dots,n}$ with $\cW_i = \R \times   \set{0}$. In case $i=1$ or $i=n$, \lemmaref{lin.lemma4} implies $[-\infty, \gamma_1] \cap \Sigma = \emptyset$ or $[\gamma_{n-1},\infty] \cap \Sigma = \emptyset$, and this is a contradiction. In case $1<i<n$, due to \lemmaref{lin.lemma3}, one obtains
  \begin{displaymath}
    \dim \cW_i
      =
    \dim \opintb{\cR(P_{\gamma_i}) \cap \cN(P_{\gamma_{i-1}})}
      =
    \rk P_{\gamma_i} + d - \rk P_{\gamma_{i-1}} - \dim \opintb{\cR(P_{\gamma_i}) +\cN(P_{\gamma_{i-1}})} \ge 1\,,
  \end{displaymath}
  and this is also a contradiction. Now the relation $\cW_1 \oplus \dots \oplus \cW_n = \Omega \times \R^d$ will be proved. For $1 \le i < j \le n$, due to \ref{lin.prop2}, the relations $\cW_i \subset \cR(P_{\gamma_i})$ and $\cW_j \subset \cN(P_{\gamma_{j-1}}) \subset \cN(P_{\gamma_i})$ are fulfilled. This yields
  \begin{displaymath}
    \cW_i \cap \cW_j \subset \cR(P_{\gamma_i}) \cap \cN(P_{\gamma_i}) = \R\times \set{0}\,,
  \end{displaymath}
  and one obtains
  \begin{eqnarray*}
    \Omega\times \R^d
      \minequ{=}
    \cW_1 + \cN(P_{\gamma_1}) = \cW_1 + \cN(P_{\gamma_1}) \cap \opintb{\cR(P_{\gamma_2}) + \cN(P_{\gamma_2})}\\
      \minequ{=}
    \cW_1 + \cN(P_{\gamma_1}) \cap \cR(P_{\gamma_2}) + \cN(P_{\gamma_2}) = \cW_1 + \cW_2 + \cN(P_{\gamma_2})\,.
  \end{eqnarray*}
  Here, the fact that linear subspaces $E,F,G\subset \R^d$ with $E\supset G$ fulfill $E\cap(F+G)= (E\cap F) + G$ was used. It follows inductively that
  \begin{displaymath}
    \Omega\times\R^d = \cW_1 + \dots+ \cW_n + \cN(P_{\gamma_n}) = \cW_1 + \dots+ \cW_n\,.
  \end{displaymath}
  This finishes the proof of this theorem.
\end{proof}

\begin{remark}
  If the linear random dynamical system $(\theta,\Phi)$ under consideration fulfills the conditions of the Multiplicative Ergodic Theorem, then \propref{Lemma1} implies that the spectral manifolds $\cW_i$ of the above theorem are given by Whitney sums of Oseledets subspaces.
\end{remark}

The remaining part of this section on the dichotomy spectrum will be devoted to study boundedness properties of the spectrum. Firstly, a criterion for boundedness from above and below is provided by the following proposition.

\begin{proposition}
  Consider a linear random dynamical system $(\theta, \Phi)$, let $\Sigma$ denote the dichotomy spectrum of $(\theta, \Phi)$, and define
  \begin{displaymath}
    \alpha^{\pm}(\omega)
    :=
    \left\{
      \begin{array}{c@{\;:\;}l}
        \ln^+\opintb{\norm{\Phi(1,\omega)^{\pm 1}}} & \T=\Z\,,\\
        \ln^+\opintb{\sup_{t\in[0, 1]}\norm{\Phi(t,\omega)^{\pm 1}}} & \T=\R\,.
      \end{array}
    \right.
  \end{displaymath}
  Then $\Sigma$ is bounded from above if and only if
  \begin{displaymath}
    \esssup_{\omega\in\Omega}\alpha^+(\omega)<\infty \,,
  \end{displaymath}
  and $\Sigma$ is bounded from below if and only if
  \begin{displaymath}
    \esssup_{\omega\in\Omega}\alpha^-(\omega)<\infty\,.
  \end{displaymath}
  Consequently, if the dichotomy spectrum $\Sigma$ is bounded, then $\Phi$ satisfies the integrability condition of the Multiplicative Ergodic Theorem.
\end{proposition}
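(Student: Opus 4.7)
The plan is to establish the two equivalences by symmetric arguments and then deduce the integrability consequence. The key duality is that an essential bound on $\alpha^\pm$ is equivalent to a uniform exponential growth bound on $\Phi(t,\omega)^{\pm 1}$, which in turn corresponds to the existence of an exponential dichotomy with full-rank (respectively zero) projector. I focus on the upper-bound equivalence; the lower-bound case is analogous after switching to the inverse cocycle via $\Phi(-1,\omega)=\Phi(1,\theta_{-1}\omega)^{-1}$ and using $\theta$-invariance of $\mP$.

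\emph{Sufficiency.} Suppose $M := \esssup_\omega \alpha^+(\omega) < \infty$. The set $N:=\{\omega:\alpha^+(\omega)>M\}$ is a $\mP$-null set, so by $\theta$-invariance of $\mP$ the countable union $\bigcup_{n\ge 0}\theta_n^{-1}N$ is null as well. For $\omega$ outside this union one has $\|\Phi(1,\theta_n\omega)\|\le e^M$ for all $n\ge 0$ (in the $\T=\R$ case the same bound holds uniformly on each translated unit interval). Iterating the cocycle identity $\Phi(n,\omega)=\Phi(1,\theta_{n-1}\omega)\cdots\Phi(1,\omega)$ then yields $\|\Phi(n,\omega)\|\le e^{nM}$; writing $t=n+s$ with $s\in[0,1)$ extends this to $\|\Phi(t,\omega)\|\le e^{(t+1)M}$ for all $t\ge 0$ in the continuous setting. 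Now for any $\gamma>M$ the invariant projector $P_\gamma\equiv\1$ together with $K:=e^M$ and $\alpha:=(\gamma-M)/2>0$ satisfies \defref{ED} (the $t\le 0$ estimate is vacuous), so $\gamma\in\rho$. Hence $\Sigma\subset[-\infty,M]$.

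\emph{Necessity.} If $\Sigma$ is bounded above then $\infty\in\rho$, and by the definition of exponential dichotomy with growth rate $\infty$ stated at the beginning of \secref{lin.sec1} there exist $\gamma_0\in\R$, $K\ge 1$, $\alpha>0$ and $P_{\gamma_0}\equiv\1$ with $\|\Phi(t,\omega)\|\le K e^{(\gamma_0-\alpha)t}$ for all $t\ge 0$ and almost all $\omega$. Evaluating at $t=1$ (when $\T=\Z$), or taking the supremum over $t\in[0,1]$ (when $\T=\R$), gives the uniform almost-sure bound $\alpha^+(\omega)\le \ln^+(K e^{|\gamma_0-\alpha|})<\infty$. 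The analogue for $\alpha^-$ uses an exponential dichotomy with $P_\gamma\equiv 0$, evaluation at $t=-1$, and the identity $\Phi(-1,\omega)=\Phi(1,\theta_{-1}\omega)^{-1}$ together with $\theta$-invariance of $\mP$ to transport the bound to $\alpha^-(\omega)$.

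\emph{Consequence and main obstacle.} Once both equivalences are established, the corollary is immediate: the Multiplicative Ergodic Theorem's integrability hypothesis is that $\alpha^+$ and $\alpha^-$ belong to $L^1(\Omega,\mP)$, and essential boundedness certainly implies integrability. The main obstacle I anticipate is the measure-theoretic bookkeeping in the sufficiency step, namely producing a \emph{single} $\theta$-invariant full-measure set on which the growth bound $\|\Phi(1,\theta_n\omega)\|\le e^M$ holds simultaneously for every integer shift, so that the cocycle iteration is legitimate; this rests on nothing deeper than $\theta$-invariance of the measure and countable additivity, but must be spelt out carefully.
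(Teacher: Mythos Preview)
Your proposal is correct and follows essentially the same route as the paper's proof: in one direction you iterate the cocycle over a $\theta$-invariant full-measure set built from $\{\alpha^+\le M\}$ to obtain a uniform exponential bound (the paper writes this set as $\bigcap_{n\in\Z}\theta^n U$), and in the other direction you read off the bound on $\alpha^+$ directly from the dichotomy estimate with $P_\gamma\equiv\1$. Your treatment is in fact slightly more explicit than the paper's about the choice of constants and about why the $t\le 0$ estimate is vacuous when $P_\gamma\equiv\1$.
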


\begin{proof}
  Suppose that $\Sigma$ is bounded from above. Then there exist $K>0$ and $\alpha\in\R$ such that
  \begin{displaymath}
    \|\Phi(t,\omega)\|\leq K e^{\alpha t} \faa \omega\in\Omega\,,
  \end{displaymath}
  which implies that $\esssup_{\omega\in\Omega}\alpha^+(\omega)\leq K e^{|\alpha|}$. On the other hand, suppose that $\esssup_{\omega\in\Omega}\alpha(\omega)<\infty$. Then there exists a measurable set $U$ of probability $1$ such that for all $\omega\in\Omega$ we have $\alpha^+(\omega)\leq e^{\alpha}$ for some positive number $\alpha$. Define
  \begin{displaymath}
    \widetilde \Omega:=\bigcap_{n\in\Z}\theta^n U\,.
  \end{displaymath}
  Due to the measure preserving property of $\theta$, we get that $\mP\opintb{\widetilde \Omega}=1$. Let $\gamma>\alpha$ be arbitrary. Then for all $\omega\in\widetilde \Omega$, we have
  \begin{displaymath}
    \|\Phi(t,\omega)\|\leq e^{\alpha t+\alpha} \fa t > 0 \,,
  \end{displaymath}
  which implies that $\gamma\in \R\setminus \Sigma$. Hence, $\Sigma\subset (-\infty,\alpha]$. Similarly, we get that $\Sigma$ is bounded from below if and only if $\esssup_{\omega\in\Omega}\alpha^-(\omega)<\infty$. This finishes the proof of this proposition.
\end{proof}

The following example shows that there exist linear random dynamical systems which satisfy the integrability condition of the Multiplicative Ergodic Theorem, but which have no bounded dichotomy spectrum.

\begin{example}\label{Example1}
  Let $(\Omega,\mathcal F,\mathbb P)$ be a probability space and $\theta:\R\times\Omega\to \Omega$ be a metric dynamical system which is ergodic and non-atomic. Then there exists, by using \cite[Lemma 2, p.~71]{Halmos_60_1}, a measurable set $U$ of the form
  \begin{equation}\label{Eq10}
    U=\bigcup_{k=1}^\infty\bigcup_{j=0}^{k}\theta_j U_k,
  \end{equation}
  where $U_i$, $i\in\N$, are measurable sets such that
  \begin{itemize}
    \item [(i)]
    for all $k,\ell\in\N$, $i\in\set{0,\dots, k}$ and $j\in\set{0,\dots,\ell}$, we have
    \begin{displaymath}
      \theta_j U_k\cap \theta_i U_{\ell}=\emptyset \quad \mbox{whenever }\, k\not=\ell \mor i\not=j\,,
    \end{displaymath}
    \item [(ii)]
    $0<\mathbb P(U_k)\leq \frac{1}{k^3}$ for all $k\in\N$\,.
  \end{itemize}
  We now define a random variable $a:\Omega\to \R$ by
  \begin{displaymath}
  a(\omega):=
  \left\{
    \begin{array}{c@{\;:\;}l}
      1 & \omega\in \Omega\setminus U\,,  \\
      k & k \mbox{ is even and } \omega\in\theta_j U_k\,, \\
      \frac{1}{k} & k \mbox{ is odd and } \omega\in\theta_j U_k\,.
    \end{array}
  \right.
  \end{displaymath}
  Using the random variable $a$, we define a discrete-time scalar linear random dynamical system $\Phi:\Z\times \Omega \to \R$ by
  \begin{displaymath}
    \Phi(t,\omega)=\left\{
     \begin{array}{c@{\;:\;}l}
       a(\theta_{t-1}\omega) \cdots a(\omega) & t\geq 1\,, \\
       1 & t=0\,,  \\
       a(\theta_{-1}\omega)^{-1}\cdots a(\theta_{t}\omega)^{-1} & t\leq -1\,.
     \end{array}
   \right.
  \end{displaymath}
  A direct computation yields that
  \begin{displaymath}
    \mathbb E \ln^+(\|\Phi(1,\omega)\|)
    =
    \sum_{k=1}^\infty (2k+1)\mP(U_{2k})\ln (2k)
    \leq
    \sum_{k=1}^\infty (2k+1)\frac{\ln (2k)}{8k^3}<\infty\,,
  \end{displaymath}
  and
  \begin{align*}
  \mathbb E \ln^+(\|\Phi(1,\omega)^{-1}\|)
  &=
  \sum_{k=0}^\infty (2k+2)\mP(U_{2k+1})\ln (2k+1)\\
  &\leq
  \sum_{k=1}^\infty (2k+2)\frac{\ln (2k+1)}{(2k+1)^3}<\infty\,.
  \end{align*}
  Then the linear system $\Phi$ satisfies the integrability condition of the Multiplicative Ergodic Theorem. The fact that the dichotomy spectrum of $\Phi$ is unbounded from above follows from
  \begin{displaymath}
    \|\Phi(n,\omega)\|\geq k^{n}\fa \omega\in U_k \mwith k \mbox{ even and } 0\le n\le k\,.
  \end{displaymath}
  Similarly, one can prove that the spectrum is unbounded from below.
\end{example}

\section{Random pitchfork bifurcation}\label{sec_bif}

We first review in \subsecref{subsec1} the main results of \cite{Crauel_98_1}, which concern the one-dimensional stochastic differential equation
\begin{equation}\label{Sde}
  \rmd x= \opintb{\alpha x-x^3}\rmd t+ \sigma \rmd W_t\,,
\end{equation}
depending on real parameters $\alpha$ and $\sigma$ and driven by a two-sided Wiener process $(W_t)_{t\in\R}$. This stochastic differential equation has a unique random fixed point $\set{a_\alpha(\omega)}_{\omega\in\Omega}$ for all $\alpha\in\R$. We then show in \subsecref{subsec2} that there is a qualitative change in the random dynamics at the bifurcation point $\alpha=0$ in the sense that after the bifurcation, the attracting random fixed points $\set{a_\alpha(\omega)}_{\omega\in\Omega}$ have qualitatively different properties for $\alpha<0$ and $\alpha\ge 0$ with respect to uniform attraction, which is lost at the bifurcation point. We also associate this bifurcation in \subsecref{subsec3} with non-hyperbolicity of the spectrum of the linearization at the bifurcation point.

\subsection{Existence of a unique random attracting fixed point}\label{subsec1}

Consider the stochastic differential equation \eqref{Sde}. We first look at the deterministic case $\sigma = 0$. Then for $\alpha<0$, the ordinary differential equation \eqref{Sde} has one equilibrium ($x=0$) which is globally attractive. For positive $\alpha$, the trivial equilibrium becomes repulsive, and there are two additional equilibria, given by $\pm \sqrt{\alpha}$, which are attractive. This also means that the global attractor $K_\alpha$ of the deterministic equation undergoes a bifurcation from a trivial to a nontrivial object. It is given by
\begin{displaymath}
  K_\alpha:=
  \left\{
    \begin{array}{c@{\;:\;}l}
      \set{0} & \alpha \le 0\,,\\%
      \clintb{-\sqrt\alpha,\sqrt\alpha}   & \alpha> 0\,.
    \end{array}
  \right.
\end{displaymath}

It was shown in \cite{Crauel_98_1} that such an attractor bifurcation does not persist for random attractors of the randomly perturbed system where $\sigma >0$, and we will explain the details now.

Firstly, the stochastic differential equation \eqref{Sde} generates a random dynamical system $(\theta:\R\times \Omega\to\Omega,\vp:\R\times \Omega \times \R\to \R)$ which induces a Markov semigroup with transition probabilities $T(x, B)$ for $x\in \R$ and $B\in\cB(\R)$. A probability measure $\rho$ on $\cB(X)$ is called a \emph{stationary measure} for the Markov semigroup if
\begin{displaymath}
  \rho(B) = \int_\R T(x,B) \,\rmd \rho(x) \fa B\in\cB(\R)\,.
\end{displaymath}
It can be shown \cite[p.~474]{Arnold_98_1} that for any $\alpha,\sigma\in\R$, the Markov semigroup associated with \eqref{Sde} admits a unique stationary measure $\rho_{\alpha,\sigma}$ with density
\begin{equation}\label{Eq3}
  p_{\alpha,\sigma}(x)=N_{\alpha,\sigma}\exp\opintb{\tfrac{1}{\sigma^2}(\alpha x^2-\tfrac{1}{2}{x^4})}\,,
\end{equation}
where $N_{\alpha,\sigma}$ is a normalization constant. This stationary measure corresponds to an invariant measure $\mu$ of the random dynamical system $(\theta,\vp)$ generated by \eqref{Sde}. $\mu$ has the disintegration given by
\begin{displaymath}
  \mu_\omega=\lim_{t\to\infty}\vp(t,\theta_{-t}\omega)\rho \faa \omega\in\Omega\,.
\end{displaymath}
It was shown in \cite{Crauel_98_1} that $\mu_\omega$ is a Dirac measure concentrated on $a_\alpha(\omega)$, and linearizing along this invariant measure $\mu$ yields a negative Lyapunov exponent, given by
\begin{displaymath}
  \lambda_\alpha=-\frac{2}{\sigma^2}\int_\R(\alpha x-x^3)^2p_{\alpha,\sigma}(x)\,\rmd x\,.
\end{displaymath}
Moreover, the family $\set{a_\alpha(\omega)}_{\omega\in\Omega}$ is the global random attractor (see Appendix), which implies that the attractor bifurcation associated with a deterministic pitchfork bifurcation (that is, $K_\alpha$ bifurcates from a non-trivial object to a singleton) is destroyed by noise.

\subsection{Qualitative changes in uniform attractivity}\label{subsec2}

In order to establish qualitative changes in the attractivity of the unique random attracting fixed point $\set{a_\alpha(\omega)}_{\omega\in\Omega}$, a detailed understanding about the location of this attractor is needed. For this purpose, we use similar techniques as developed in \cite{Tearne_05_1,Tearne_08_1}.

\begin{proposition}\label{Prp1}
  Consider \eqref{Sde} for $\alpha\in\R$, and let $\set{a_\alpha(\omega)}_{\omega\in\Omega}$ be its unique random fixed point. Then for any $\eps>0$ and $T\geq 0$, there exists a measurable set $\cA \in \cF_{-\infty}^{T}$ (see Appendix) of positive measure such that
  \begin{displaymath}
    a_\alpha(\theta_s\omega)\in (-\eps,\eps)\fa  s\in [0,T] \mand \omega\in\cA \,.
  \end{displaymath}
\end{proposition}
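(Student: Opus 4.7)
The plan is to combine the fact that the stationary density $p_{\alpha,\sigma}$ in \eqref{Eq3} is strictly positive on $\R$---so that $\mP\{|a_\alpha|<\delta\}>0$ for every $\delta>0$, since $a_\alpha(\omega)$ is $\rho_{\alpha,\sigma}$-distributed---with a small-ball estimate on the Wiener increments over $[0,T]$. Crucially, the increments $\{W_s-W_0:s\in[0,T]\}$ are independent of the past $\sigma$-algebra $\cF_{-\infty}^0$, which determines $a_\alpha(\omega)$; the set $\cA$ will be constructed as an intersection of one event measurable with respect to each. I fix $\delta<\eps/2$ and set $A_1:=\{|a_\alpha|<\delta\}\in\cF_{-\infty}^0$.

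The central construction will be the random \emph{target path}
\begin{equation*}
w^*(s,\omega):=-(\alpha a_\alpha(\omega)-a_\alpha(\omega)^3)\,s,\qquad s\in[0,T],
\end{equation*}
chosen so that, if the driving noise satisfied $\sigma(W_s-W_0)=w^*(s,\omega)$ identically on $[0,T]$, then by uniqueness for \eqref{Sde} the forward trajectory $\vp(\cdot,\omega)a_\alpha(\omega)$ would remain constantly equal to $a_\alpha(\omega)$, as the drift integral and the noise term cancel exactly. On $A_1$, $w^*(\cdot,\omega)$ is linear with slope bounded by $C:=|\alpha|\delta+\delta^3$, so the family $\{w^*(\cdot,\omega):\omega\in A_1\}$ is a uniformly bounded (in fact compact) subset of $C([0,T])$.

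I will then invoke the support theorem for Wiener measure (equivalently, a Cameron--Martin shift argument): for any fixed continuous path starting at $0$, the probability that $\sigma(W_\cdot-W_0)$ stays within $\eta$ of it in sup-norm is strictly positive, and by lower semicontinuity of small-ball probabilities together with compactness of the target family, this lower bound can be chosen uniform in $\omega\in A_1$, say $\ge p_0>0$. Since $\{W_s-W_0:s\in[0,T]\}$ is independent of $\cF_{-\infty}^0$, conditioning on $a_\alpha$ yields
\begin{equation*}
\mP(A_1\cap A_2)\ge p_0\,\mP(A_1)>0,\qquad A_2:=\setB{\omega:\sup_{s\in[0,T]}|\sigma(W_s-W_0)-w^*(s,\omega)|<\eta},
\end{equation*}
and the set $\cA:=A_1\cap A_2$ lies in $\cF_{-\infty}^T$.

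Finally, to verify on $\cA$ that $a_\alpha(\theta_s\omega)\in(-\eps,\eps)$ for all $s\in[0,T]$, I will run a pathwise perturbation estimate. Writing $X_s:=a_\alpha(\theta_s\omega)=\vp(s,\omega)a_\alpha(\omega)$ and $Y_s:=X_s-a_\alpha(\omega)$, the defining property of $w^*$ recasts the integral form of \eqref{Sde} as
\begin{equation*}
Y_s=\bigl(\sigma(W_s-W_0)-w^*(s,\omega)\bigr)+\int_0^s\bigl[(\alpha X_u-X_u^3)-(\alpha a_\alpha-a_\alpha^3)\bigr]\rmd u,
\end{equation*}
so that, as long as $|X_u|<\eps$, one has $|Y_s|\le\eta+L\int_0^s|Y_u|\,\rmd u$, where $L=L(\alpha,\eps)$ bounds the Lipschitz constant of $x\mapsto\alpha x-x^3$ on $(-\eps,\eps)$. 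A Gronwall estimate combined with a continuity bootstrap (stopping at the first exit of $(-\eps,\eps)$) then gives $|Y_s|\le\eta e^{LT}$ throughout $[0,T]$, provided $\eta$ is chosen small enough depending on $\alpha,\eps,T,\delta$; hence $|X_s|\le\delta+\eta e^{LT}<\eps$ on $[0,T]$. I expect the main technical difficulty to lie in the uniform positivity $p_0$ of the small-ball lower bound across $A_1$, which compactness and a Cameron--Martin shift address; the SDE perturbation estimate itself is routine.
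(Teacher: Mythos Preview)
Your proof is correct and takes a genuinely different route from the paper's.

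The paper distinguishes the cases $\alpha\le 0$ and $\alpha>0$. For $\alpha\le 0$ it cites \cite{Tearne_05_1} to get $a_\alpha(\omega)$ small on a set of positive measure, then intersects with a small-noise event and applies Gronwall. For $\alpha>0$ it proceeds through a four-step construction: it first builds an explicit absorbing set via comparison with a Langevin equation, then localises $a_\alpha(\omega)$ to a unit neighbourhood of the deterministic attractor $K=[-\sqrt\alpha,\sqrt\alpha]$, then designs a noise path that \emph{pushes} the trajectory from this neighbourhood toward $0$, and only at the end intersects with a small-noise event and applies a Gronwall estimate on $[0,T]$.

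Your approach is unified across all $\alpha$ and considerably shorter. The key saving is that you invoke directly the identification of the law of $a_\alpha$ with the stationary measure $\rho_{\alpha,\sigma}$ (already recorded in the paper's \S\ref{subsec1}); positivity of the density \eqref{Eq3} then gives $\mP\{|a_\alpha|<\delta\}>0$ in one stroke, replacing the paper's Steps~1--3. Your second idea---choosing the target noise path $w^*$ so that the solution is \emph{frozen} at its initial value rather than pushed toward zero---is also cleaner than the paper's Step~3, since it reduces the perturbation analysis to a straightforward Lipschitz/Gronwall bootstrap on the fixed interval $(-\eps,\eps)$. The paper's approach, by contrast, is more self-contained in that it does not rely on the law of $a_\alpha$ being known; it would still go through if only qualitative information about the attractor were available. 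A minor remark: the Cameron--Martin/compactness argument you flag is not really needed---since on $A_1$ the slopes of $w^*$ are bounded by $C=|\alpha|\delta+\delta^3$, after choosing $\delta$ small enough that $CT<\eta/2$ the fixed event $\{\sup_{s\in[0,T]}|\sigma W_s|<\eta/2\}\in\cF_0^T$ already suffices, and independence from $\cF_{-\infty}^0$ gives $\mP(\cA)>0$ without conditioning.
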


\begin{proof}
  We first consider the case $\alpha\le 0$.
  According to \cite[Theorem~12]{Tearne_05_1}, there exists $\cA \in \cF_{-\infty}^{0}$ of positive measure such that $a(\omega)\in (-\eps/3,\eps/3)$ for all $\omega\in \cA$. Define
  \begin{displaymath}
    \cA^+:= \setB{\omega\in\Omega: \sup_{t\in[0,T]} |\omega(t)|\le \delta:= \frac{\eps}{2 e^{-\alpha T}}}\,.
  \end{displaymath}
  Then
  \begin{displaymath}
    |\vp(t,\omega,a(\omega)) - \phi(t,a(\omega))|\le \delta -\alpha \int_0^t |\vp(s,\omega,a(\omega)) - \phi(s,a(\omega))|\,\rmd s\,,
  \end{displaymath}
  where $\phi(\cdot,x_0)$ denotes the solution of the initial value problem
  \begin{displaymath}
    \dot x_t=\alpha x_t-x_t^3, \quad x(0)=x_0\,.
  \end{displaymath}
  Thus,
  \begin{displaymath}
   |\vp(t,\omega,a(\omega))| \le |a(\omega)| + \delta e^{-\alpha t} < \eps \fa t\in [0,T] \mand \omega \in \cA \cap \cA^+\,.
  \end{displaymath}

  This implies the assertion for $\alpha\le 0$. It remains to show the proposition for $\alpha>0$; the proof of this fact is divided in the following four steps.

  \emph{Step 1.} We will construct an absorbing set for the random dynamical system $(\theta, \vp)$. For this purpose, let $B_{\rho}(0)$ for some $\rho>0$ be a ball in $\R$, for which we will consider the pullback limit $\vp(t,\theta_{-t}\omega)B_{\rho}(0)$. Consider the Langevin equation
  \begin{equation}\label{Langevin}
    \rmd z =-\alpha z \,\rmd t+\sigma \rmd W(t)\,,
  \end{equation}
  and let $\psi:\R\times \Omega \times \R\to\R$ denote the associated random dynamical system, given by
  \begin{displaymath}
    \psi(t,\omega)z_0=e^{-\alpha t} z_0+\sigma\int_0^t e^{-\alpha(t-s)}\,\rmd W(s)\,.
  \end{displaymath}
  It follows that
  \begin{displaymath}
    \psi(t,\theta_{-t}\omega)z_0=e^{-\alpha t} z_0+\sigma\int_{-t}^0 e^{\alpha s}\,\rmd W(s)\,,
  \end{displaymath}
  which implies that
  \begin{displaymath}
  z(\omega):=\lim_{t\to\infty}    \psi(t,\theta_{-t}\omega)z_0= \sigma\int_{-\infty}^0 e^{\alpha s}\,\rmd W(s)
  \end{displaymath}
  is the unique random fixed point of \eqref{Langevin}.  Using the exponential martingale inequality, for almost all $\omega\in\Omega$, there are positive constants $A(\omega),B(\omega)$ such that
  \begin{equation}\label{IteratedLogaLaw}
    (\psi(t,\omega)z(\omega))^2\leq A(\omega)+ B(\omega)\ln (1+|t|) \fa t\in\R\,.
  \end{equation}
  Fix $\tau\ge0$ and $\omega_0\in\Omega$, and define $v(t):=\vp(t,\theta_{-\tau}\omega_0)x_0- \psi(t,\theta_{-\tau}\omega_0) z(\theta_{-\tau}\omega_0)$ for all $t\in\R$, where $x_0\in B_\rho(0)$. Using the integral form of \eqref{Sde}, we have
  \begin{align*}
    v(t)&=\vp(t,\theta_{-\tau}\omega_0)x_0-\psi(t,\theta_{-\tau}\omega_0)z(\theta_{-\tau}\omega_0)\\
    &=\int_0^t \opintb{\alpha \vp(s,\theta_{-\tau}\omega_0)x_0-(\vp(s,\theta_{-\tau}\omega_0)x_0)^3}\,\rmd s
    +
    \int_0^t \alpha \psi(s,\theta_{-\tau}\omega_0)z(\theta_{-\tau}\omega_0)\,\rmd s\,,
  \end{align*}
  which yields that
  \begin{equation}\label{Eq13}
    \dot v(t)+\alpha v(t)= 2\alpha(v(t)+\psi(t,\theta_{-\tau}\omega_0)z(\theta_{-\tau}\omega_0))-(v(t)+\psi(t,\theta_{-\tau}\omega_0)z(\theta_{-\tau}\omega_0))^3\,.
  \end{equation}
  Note that using Cauchy's Inequality, we obtain that for all $v,z\in\R$
  \begin{align*}
  3v^2z^2+\frac{\alpha^2}{3}
  &\geq
  2\alpha vz\,,\\
  \frac{v^4}{2}+2\alpha^2
  &\geq
  2\alpha v^2\,,\\
  \frac{v^4}{12}+   \frac{v^4}{12}+  \frac{v^4}{12}+\frac{3^{7}}{4}z^4
  &\geq
  3v^3z\,,\\
  \frac{v^4}{4}+\frac{3\sqrt[3] 3}{4}z^4+\frac{3\sqrt[3] 3}{4}z^4+\frac{3\sqrt[3] 3}{4}z^4
  &\geq
  3vz^3\,.
  \end{align*}
  Therefore,
  \begin{displaymath}
    \left( 2\alpha(v+z)-(v+z)^3\right)v
    \leq
    C (1+z^2+z^4) \fa  v,z\in\R\,,
  \end{displaymath}
  where $C:=\max\setB{\frac{7}{3}\alpha^2,\frac{3^7}{4}+\frac{3\sqrt[3] 3}{4}}$. Thus, from \eqref{Eq13} we derive that
  \begin{align*}
    v(t) \dot v(t) +\alpha v(t)^2
    &\leq C (1+(\psi(t,\theta_{-\tau}\omega_0)z(\theta_{-\tau}\omega_0))^2+(\psi(t,\theta_{-\tau}\omega_0)z(\theta_{-\tau}\omega_0))^4)\,,\\[1ex]
    &= C(1+z(\theta_{t-\tau}\omega_0)^2+z(\theta_{t-\tau}\omega_0)^4)
  \end{align*}
  which implies that
  \begin{align*}
    v(\tau)^2
    &\leq
     e^{-2\alpha \tau} v(0)^2+2C\int_0^\tau e^{2\alpha (s-\tau)} \left(1+ z(\theta_{s-\tau}\omega_0)^2+z(\theta_{s-\tau}\omega_0)^4\right)\,\rmd s\\[1ex]
     &\leq
     e^{-2\alpha \tau} v(0)^2+2C\int_{-\infty}^0 e^{2\alpha s} \left(1+z(\theta_{s}\omega_0)^2+z(\theta_{s}\omega_0)^4\right)\,\rmd s\,,
  \end{align*}
  where the existence of infinity integral follows from \eqref{IteratedLogaLaw}. Consequently,
  \begin{align*}
    (\vp(\tau,\theta_{-\tau}\omega_0)x_0)^2
    &\leq
    2 v(\tau)^2+2z(\omega_0)^2\\[1ex]
    &\leq
    2e^{-2\alpha \tau}v(0)^2+R(\omega_0)+2z(\omega_0)^2\\[1ex]
    &\leq
    4e^{-2\alpha \tau}(x_0^2+z(\theta_{-\tau}\omega_0)^2)+R(\omega_0)+2z(\omega_0)^2    ,
  \end{align*}
  where
  \begin{displaymath}
    R(\omega_0):=4C\int_{-\infty}^0 e^{2\alpha s}(1+z(\theta_s\omega_0)^2+z(\theta_s\omega_0)^4)\,\rmd s\,.
  \end{displaymath}
  Since $|x_0|<\rho$ and $\limsup_{\tau\to\infty} e^{-2\alpha \tau} |z(\theta_{-\tau}\omega_0)|=0$ it follows that $B_{R(\omega_0)+2 z(\omega_0)+1}(0)$ is an absorbing set of \eqref{Sde}. Thus, $a_\alpha(\omega)\in B_{R(\omega)+2z(\omega)+1}(0)$ for almost all $\omega\in\Omega$.

  \emph{Step 2.} In this step, we construct a measurable set $A_1\subset \cF_{-\infty}^0$ of a positive probability such that
  \begin{displaymath}
    a_\alpha(\omega)\in B_{1}(K) \fa \omega\in A_1\,.
  \end{displaymath}
  Define
  \begin{displaymath}
    A^-:=\{\omega: R(\omega)+2z(\omega)\leq \E[R+2z]\}.
  \end{displaymath}
  Clearly, $\mP(A^-)>0$ and we refer to \cite{Mao_97_1} for the existence of $\E[R+2z]$. Recall that $K$ denotes the global attractor for the deterministic case $\sigma = 0$. Then there exists $T_1>0$ such that
  \begin{equation}\label{Eq11}
    \phi\opintb{t,B_{\E [R+2z] +1}(0)}\subset B_{1/3}(K) \fa t\geq T_1\,,
  \end{equation}
  where $\phi(\cdot,x_0)$ denotes the solution of the initial value problem
  \begin{displaymath}
    \dot x_t=\alpha x_t-x_t^3, \quad x(0)=x_0\,.
  \end{displaymath}
  Let $\delta_1>0$ be a positive constant satisfying that
  \begin{equation}\label{Eq12}
    \delta_1\leq \frac{1}{9\sigma e^{\alpha T_1}}.
  \end{equation}
  Now, define $A^+:=\big\{\omega: \sup_{t\in [0,T_1]}|\omega(t)|\leq \delta_1\big\}$. From \cite[Section 6.8]{Ikeda_81_1}, the set $A^+$ has positive measure. Clearly, $A^-$ and $A^+$ are independent and therefore the set $A^-\cap A^+\in \cF_{-\infty}^{T_1}$ is also of positive probability measure. Choose and fix an arbitrary $\omega\in A^-\cap A^+$. By the definition of $A^-$, we get that $a_\alpha(\omega)\in B_{\E [R+2z]+1}(0)$. Since $a_\alpha(\omega)$ is a random fixed point of $\vp$, it follows that
  \begin{displaymath}
    a_\alpha(\theta_t\omega)=a_\alpha(\omega)+\int_0^t \opintb{\alpha a_\alpha(\theta_s\omega)-a_\alpha(\theta_s\omega)^3}\,\rmd s+\sigma \omega(t)\,.
  \end{displaymath}
  Define $u(t):=a_\alpha(\theta_t\omega)-\phi(t,a_\alpha(\omega))$. According to the definition of $\phi(t,\cdot)$, we obtain that
  \begin{displaymath}
  u(t)=\int_0^t \alpha u(s)-u(s)\left(a_\alpha(\theta_s\omega)^2+a_\alpha(\theta_s\omega)\phi(s,a_\alpha(\omega))+\phi(s,a_\alpha(\omega))^2\right)\,\rmd s+\sigma w(t),
  \end{displaymath}
  which together with the fact that $|w(t)|\leq \delta_1$ for all $t\in [0,T_1]$  implies that
  \begin{displaymath}
  |u(t)|\leq \sigma\delta_1+\int_0^t\alpha |u(s)|\,\rmd s\fa  t\in [0,T_1]\,.
  \end{displaymath}
  Using Gronwall's inequality, we get that
  \begin{displaymath}
  |u(t)|\leq \sigma\delta_1 e^{\alpha t}\leq \frac{1}{3}\fa t\in [0,T_1]\,.
  \end{displaymath}
  Therefore, by \eqref{Eq11} we get that $a_\alpha(\theta_{T_1}\omega)\in B_{1}(K)$. Consequently, the set $A_1:=\theta_{T_1}(A^-\cap A^+)$ satisfies the desired assertion in this step.

  \emph{Step 3.} In this step, we construct a measurable set $A_2\subset \cF_{-\infty}^0$ of a positive probability such that
  \begin{displaymath}
  a_\alpha(\omega)\in (-\delta_2,\delta_2)\fa \omega\in A_2\,,
  \end{displaymath}
  where $\delta_2:=\frac{\eps e^{-\alpha T}}{2(1+|\sigma|)}$. For this purpose, let $\eps_1\in \R_{>0}$ be arbitrary.
  According to the construction of the set $A$ in Step 2, we obtain that $a_\alpha(\omega)\in B_1(K)$ for all $\omega\in A$. This together with the fact that
  \begin{displaymath}
  B_1(K)=  B_1(K)\cup \bigcup_{n\in\Z} \big[n \eps_1,(n+1)\eps_1\big]
  \end{displaymath}
  implies that there exists $n\in\Z$ such that
  \begin{equation}\label{Eq14}
  |n|\eps_1\leq \sqrt\alpha+1 \qandq \mP \left(\Big\{\omega\in A: a_\alpha(\omega)\in \big[n \eps_1,(n+1)\eps_1\big]\Big\}\right)>0\,.
  \end{equation}
  We will now only deal with the case $n\geq 0$ and refer a similar treatment for the case $n<0$. Let $\widetilde \phi$ denote the solution of the following integral equation
  \begin{equation}\label{Eq15}
  x(t)=n\eps_1+\int_0^t \opintb{\alpha x(s)-x(s)^3} \rmd s -2(\alpha^{\frac{3}{2}}+1)t\,.
  \end{equation}
  Define $T_{\min}:=\min\setb{t\geq 0: \widetilde \phi(t)=0}$. We will show that $T_{\min}<1$. Suppose the contrary, i.e.\ $\widetilde \phi(t)>0$ for all $t\in [0,1]$. Using the inequality that $\alpha x-x^3\leq \alpha x$ for all $x\geq 0$ and \eqref{Eq15}, we get that
  \begin{displaymath}
  \widetilde \phi(t)\in (0,n\eps_1]\fa t\in [0,1]\,.
  \end{displaymath}
  Therefore,
  \begin{align*}
    \widetilde \phi(1)
    &=
    n\eps_1+\int_0^1 \opintb{\alpha \widetilde\phi(s)-\widetilde\phi(s)^3}\rmd s-2(\alpha^{\frac{3}{2}}+1)\\
    &\leq
    n\eps_1+\alpha n\eps_1-2(\alpha^{\frac{3}{2}}+1)<0\,,
  \end{align*}
  which leads to a contradiction. Now we define
  \begin{displaymath}
  \widetilde A_1:=\setb{\omega\in\Omega: \textstyle \sup_{t\in [0,T_{\min}]} \abvalb{\omega(t)+2\opintb{\alpha^{\frac{3}{2}}+1}t}<\eps_1}.
  \end{displaymath}
  Note that for any $\omega\in A_1\cap \widetilde A_1$, we have
  \begin{align*}
    a_\alpha(\theta_{T_{\min}}\omega)
    &=
    a_\alpha(\omega)+\int_0^t \opintb{\alpha a_\alpha(\theta_s\omega)-a_\alpha(\theta_s\omega)^3}\rmd s+\omega(t)\\
    &\leq
    2\eps_1+n\eps_1+\int_0^t \opintb{\alpha a_\alpha(\theta_s\omega)-a_\alpha(\theta_s\omega)^3}\rmd s-2(\alpha^{\frac{3}{2}}+1)t\,.
  \end{align*}
  Consequently, by choosing $\eps_1$ sufficiently small we get that $|a_\alpha(\theta_{T_{\min}}\omega)|<\delta_2$ for all $\omega\in \omega\in A_1\cap \widetilde A_1$. Thus, the set $A_2:=\theta_{T_{\min}}(A_1\cap \widetilde A_1)$ will satisfy the desired assertion in this step.

  \emph{Step 4.} Define
  \begin{displaymath}
    \widetilde A_2:=\setb{\omega\in\Omega: \textstyle\sup_{t\in [0,T]}|\omega(t)|\leq \delta_2}\,,
  \end{displaymath}
  where $\delta_2$ is defined as in Step 3.  Clearly, $A_2$ and $\widetilde A_2$ are independent and therefore the set $\mathcal A:=A_2\cap \widetilde A_2$ is also of positive probability measure. Choose and fix an arbitrary $\omega\in \mathcal A$. By the construction of $A_2$ as in Step 3, we get that $|a_\alpha(\omega)|<\delta_2$. Since $a_\alpha(\omega)$ is a random fixed point of $\vp$ it follows that
  \begin{displaymath}
    a_\alpha(\theta_t\omega)=a_\alpha(\omega)+\int_0^t \opintb{\alpha a_\alpha(\theta_s\omega)-a_\alpha(\theta_s\omega)^3}\,\rmd s+\sigma \omega(t)\,.
  \end{displaymath}
  which implies that
  \begin{displaymath}
  |a_\alpha(\theta_t\omega)|\leq (1+|\sigma|)\delta_2+\int_0^t\alpha |a_\alpha(\theta_s\omega)|\,\rmd s\fa t\in [0,T]\,.
  \end{displaymath}
  Using Gronwall's inequality, we get that
  \begin{displaymath}
  |a_\alpha(\theta_t\omega)|\leq (1+|\sigma|)\delta_2 e^{\alpha t}< \eps \fa t\in [0,T]\,.
  \end{displaymath}
  Thus, we get that $a_\alpha(\theta_t\omega)\in (-\eps,\eps)$ for all $t\in[0,T]$, which completes the proof.
\end{proof}

We now give a detailed description of the random bifurcation scenario for the stochastic differential equation \eqref{Sde} by means of both \emph{asymptotic} and \emph{finite-time} dynamical behaviour. The asymptotic description implies that there is a qualitative change in the uniformity of attraction of the unique random attractor $\set{a_\alpha(\omega)}_{\omega\in\Omega}$. On the other hand, the finite-time description shows that after the bifurcation, even if the time interval is very large, the (asymptotic) Lyapunov exponent cannot be observed with non-vanishing probability (by a finite-time Lyapunov exponent); however, before the bifurcation, the (asymptotic) Lyapunov exponent can be approximated by the finite-time Lyapunov exponent. Finite-time Lyapunov exponents for random dynamical systems have not been considered in the literature so far, but play an important role in the description of Lagrangian Coherent Structures in fluid dynamics \cite{Haller_01_1}.

Let $\set{a_\alpha(\omega)}_{\omega\in\Omega}$ denote the unique random attracting fixed point of a stochastic differential equation \eqref{Sde}. Then $\set{a_\alpha(\omega)}_{\omega\in\Omega}$ is called \emph{locally uniformly attractive} if there exists $\delta>0$ such that
\begin{displaymath}
  \lim_{t\to\infty}\sup_{x\in (-\delta,\delta)}\esssup_{\omega\in\Omega}|\vp(t,\omega)(a_\alpha(\omega)+x)-a_\alpha(\theta_t\omega)|=0.
\end{displaymath}

\begin{theorem}[Random pitchfork bifurcation, asymptotic description]\label{Theorem_1}
  Consider the stochastic differential equation \eqref{Sde} with the unique random attracting fixed point $\set{a_\alpha(\omega)}_{\omega\in\Omega}$. Then the following statements hold:
  \begin{itemize}
    \item [(i)]
    For $\alpha<0$, the random attractor $\set{a_\alpha(\omega)}_{\omega\in\Omega}$ is locally uniformly attractive; in fact, it is even globally uniformly exponential attractive, i.e.
    \begin{equation}\label{Eq1}
      |\vp(t,\omega,x)-\vp(t,\omega,a_\alpha(\omega))|\leq e^{\alpha t} |x-a_\alpha(\omega)|\fa x\in \R\,.
    \end{equation}
    \item [(ii)]
    For $\alpha>0$, the random attractor $\set{a_\alpha(\omega)}_{\omega\in\Omega}$ is not locally uniformly attractive.
  \end{itemize}
\end{theorem}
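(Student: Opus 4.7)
For part (i), the key observation is that the additive noise in \eqref{Sde} cancels when one forms the pathwise difference of two trajectories that share the same sample path $\omega$. Setting $v(s):=\vp(s,\omega,x)-\vp(s,\omega,a_\alpha(\omega))$ and writing $y_1,y_2$ for the two trajectories, the shared Wiener increment gives the pathwise ODE
\begin{equation*}
\dot v=\alpha v-(y_1^3-y_2^3)=v\bigl[\alpha-(y_1^2+y_1y_2+y_2^2)\bigr].
\end{equation*}
Since $y_1^2+y_1y_2+y_2^2=\tfrac{1}{2}\bigl(y_1^2+y_2^2+(y_1+y_2)^2\bigr)\ge 0$ and $\alpha<0$, multiplying by $v$ yields $\tfrac{\rmd}{\rmd s}v^2\le 2\alpha v^2$, and Gronwall's inequality gives $|v(t)|\le e^{\alpha t}|x-a_\alpha(\omega)|$ for every $x\in\R$ and almost every $\omega$. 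The deterministic decay rate $e^{\alpha t}$ is uniform in both $x$ and $\omega$, which establishes \eqref{Eq1} and the claimed global uniform exponential attractivity.

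For part (ii), I would prove the contrapositive that for every $\delta>0$ the $\limsup_{t\to\infty}\sup_{x\in(-\delta,\delta)}\esssup_{\omega}|v_{x,\omega}(t)|$ is bounded below by a positive constant, where $v_{x,\omega}(s):=\vp(s,\omega)(a_\alpha(\omega)+x)-a_\alpha(\theta_s\omega)$. The same cancellation produces the pathwise equation
\begin{equation*}
\dot v=v\bigl[\alpha-v^2-3v\,a_\alpha(\theta_s\omega)-3\,a_\alpha(\theta_s\omega)^2\bigr].
\end{equation*}
Fix $\delta>0$, pick $x:=\tfrac{1}{2}\min\{\delta,\sqrt\alpha\}>0$, and choose $\eps>0$ so small that the scalar autonomous ODE $\dot\xi=\xi[\alpha-\xi^2-3\xi\eps-3\eps^2]=:f_\eps(\xi)$ has a positive equilibrium $\xi_\eps^\star=\tfrac{1}{2}(-3\eps+\sqrt{4\alpha-3\eps^2})\ge\sqrt\alpha/2$. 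For arbitrary $T>0$, \propref{Prp1} provides $\cA\in\cF_{-\infty}^{T}$ of positive measure on which $|a_\alpha(\theta_s\omega)|<\eps$ for all $s\in[0,T]$. On $\cA$, as long as $v>0$, one has $v^2+3v\,a_\alpha(\theta_s\omega)+3\,a_\alpha(\theta_s\omega)^2\le v^2+3v\eps+3\eps^2$, hence $\dot v\ge f_\eps(v)$. Scalar comparison with the solution $\xi$ of $\dot\xi=f_\eps(\xi)$ and $\xi(0)=x$ yields $v(s)\ge\xi(s)$ while $v>0$; since $f_\eps(\xi)>0$ on $(0,\xi_\eps^\star)$, the comparison solution $\xi$ stays strictly positive and monotonically tends to $\xi_\eps^\star$. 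A standard bootstrap (using continuity: if $v(s_0)=0$ first, then $\xi(s_0)\le v(s_0)=0$, contradicting positivity of $\xi$) shows $v(s)\ge\xi(s)>0$ throughout $[0,T]$.

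Choosing $T=T_\delta$ large enough so that $\xi(T_\delta)\ge\sqrt\alpha/3$ (depending only on $\delta,\alpha,\eps$, not on $\omega$) yields $|v_{x,\omega}(T)|\ge\sqrt\alpha/3$ on the positive-measure set $\cA$ for every $T\ge T_\delta$, so
\begin{equation*}
\sup_{x\in(-\delta,\delta)}\esssup_{\omega\in\Omega}\bigl|\vp(t,\omega)(a_\alpha(\omega)+x)-a_\alpha(\theta_t\omega)\bigr|\ge\tfrac{\sqrt\alpha}{3}\fa t\ge T_\delta,
\end{equation*}
contradicting local uniform attractivity. The main obstacle is to keep the lower bound $\sqrt\alpha/3$ independent of $T$: one must invoke \propref{Prp1} with the \emph{same} $\eps$ along an unbounded sequence of times $T_n\to\infty$ (the proposition permits this since $T$ is arbitrary), and exploit the fact that $\xi(s)$ converges monotonically to $\xi_\eps^\star$ so that the bound saturates uniformly in $T$. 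The positivity bootstrap for $v$ on $[0,T]$ is the other delicate point, but is handled by the strict positivity of the comparison trajectory $\xi$.
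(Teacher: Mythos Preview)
Your argument is correct. Part~(i) is essentially the paper's proof: both observe that the additive noise cancels in the difference and that the cubic contribution has a definite sign, then apply Gronwall. Your squaring trick $\tfrac{\rmd}{\rmd s}v^2\le 2\alpha v^2$ is a slight cosmetic improvement over the paper, which instead invokes order-preservation of the flow to fix the sign of $v$ before applying Gronwall directly.

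Part~(ii) is where you genuinely diverge from the paper. The paper only uses \propref{Prp1} at $T=0$ to pin down $a_\alpha(\omega)$ near the origin, then separately forces the Brownian path to be small on $[0,T]$ (via an independent event $\cA_\eps^+\in\cF_0^T$) so that the full SDE solutions $\vp(T,\omega,\pm\delta/4)$ track the deterministic flow $\phi(T,\pm\delta/4)$ toward $\pm\sqrt\alpha$. Because the paper does not control $a_\alpha(\theta_T\omega)$, it needs \emph{both} initial points $\pm\delta/4$ and the pigeonhole observation that $a_\alpha(\theta_T\omega)$ must be at distance $\ge\sqrt\alpha/4$ from at least one of them. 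Your route instead exploits the full interval version of \propref{Prp1} to keep $a_\alpha(\theta_s\omega)$ small for all $s\in[0,T]$; this lets you work directly with the difference $v$, compare it to the autonomous scalar equation $\dot\xi=f_\eps(\xi)$, and avoid the two-sided trick entirely. Your approach is more streamlined and makes the mechanism (the linearisation coefficient $\alpha-3a_\alpha^2$ being near $\alpha>0$) transparent, at the cost of leaning on the harder $T>0$ case of \propref{Prp1}; the paper's approach uses only the $T=0$ case plus independence of $\cF_{-\infty}^0$ and $\cF_0^T$, which is conceptually lighter on the preparatory proposition but requires the extra small-noise/two-point argument.
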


\begin{proof}
  (i) Let $x\in\R$ be arbitrary such that $x\not=a_\alpha(\omega)$. Using the monotonicity of solutions, we may assume that $\vp(t,\omega,x)>\vp(t,\omega,a_\alpha(\omega))$ for all $t\ge0$. The integral form of \eqref{Sde}\,,
  \begin{displaymath}
    \vp(t,\omega)x=x+\int_0^t \opintb{\alpha \vp(s,\omega)x-(\vp(s,\omega)x)^3}\,\rmd s+ \sigma \omega(t)
  \end{displaymath}
  yields that
  \begin{displaymath}
    \vp(t,\omega)x-\vp(t,\omega)a_\alpha(\omega)\le  x-a_\alpha(\omega)+\alpha \int_0^t \opintb{\vp(s,\omega)x-\vp(s,\omega)a_\alpha(\omega)}\,\rmd s\,.
  \end{displaymath}
  Using Gronwall's inequality implies \eqref{Eq1}, which finished this part of the proof.

  (ii) Suppose to the contrary that there exists $\delta>0$ such that
  \begin{equation*}
    \lim_{t\to\infty}\sup_{x\in (-\delta,\delta)} \esssup_{\omega\in\Omega}|\vp(t,\omega,a_\alpha(\omega)+x)-a_\alpha(\theta_t\omega)|=0\,,
  \end{equation*}
  which implies that there exists $N\in\N$ such that
  \begin{equation}\label{Eq2}
  \sup_{x\in (-\delta,\delta)} \esssup_{\omega\in\Omega}|\vp(t,\omega,a_\alpha(\omega)+x)-a_\alpha(\theta_t\omega)|
  < \frac{\sqrt \alpha}{4}\fa t\geq N\,.
  \end{equation}
  According to Proposition \ref{Prp1}, there exists $\mathcal A\in \cF_{-\infty}^0$ of positive probability such that $a_\alpha(\omega)\in (-\frac{\delta}{4},\frac{\delta}{4})$. Note that $-\sqrt\alpha$ and $\sqrt\alpha$ are two attractive fixed point for the deterministic differential equation
  \begin{displaymath}
    \dot x=\alpha x-x^3\,.
  \end{displaymath}
  Let $\phi(\cdot,x_0)$ denote the solution of the above deterministic equation which satisfies that $x(0)=x_0$. Then there exists $T>N$ such that
  \begin{equation}\label{Eq16}
    \phi(T,\delta/4)>\frac{\sqrt\alpha}{2} \qandq \phi(T,-\delta/4)<-\frac{\sqrt\alpha}{2}\,.
  \end{equation}
  For any $\eps>0$, we define
  \begin{displaymath}
    \mathcal A^+_\eps:=\setb{\omega\in\Omega: \textstyle \sup_{t\in [0,T]}|\omega(t)|<\eps}\,.
  \end{displaymath}
  Clearly, $\mathcal A^+_\eps\in \cF_{0}^T$ has positive probability, and thus, $\mP(\mathcal A\cap \mathcal A_{\eps}^+)=\mP(\mathcal A)\mP(\mathcal A_{\eps}^+)$ is positive. Due to compactness of $[0,T]$, there exists $\eps>0$ such that for all $\omega\in \cA^+_\eps$, we have
  \begin{displaymath}
    |\vp(T,\omega,\delta/4)- \phi(T,\delta/4)|\leq \frac{\sqrt\alpha}{4} \qandq |\vp(T,\omega,-\delta/4)- \phi(T,-\delta/4)|< \frac{\sqrt\alpha}{4}\,,
  \end{displaymath}
  which implies together with \eqref{Eq16} that
  \begin{displaymath}
  \vp(T,\omega,\delta/4)> \frac{\sqrt\alpha}{4} \qandq \vp(T,\omega,-\delta/4)< -\frac{\sqrt\alpha}{4}\,.
  \end{displaymath}
  Due to the fact that $|a_\alpha(\omega)|\leq \frac{\delta}{2}$ for all $\omega\in \mathcal A\cap \mathcal A_{\eps}^+$, we get that for all $\omega\in \mathcal A\cap \mathcal A_{\eps}^+$
  \begin{align*}
    &\sup_{x\in (-\delta,\delta)}|\vp(T,\omega,a_\alpha(\omega)+x)-a_\alpha(\theta_T\omega)|\\
    \geq&
    \max\setb{\vp(T,\omega,\delta/4)-a_\alpha(\theta_T\omega)|,|\vp(T,\omega,-\delta/4)-a_\alpha(\theta_T\omega)|}\,.
  \end{align*}
  Consequently,
  \begin{displaymath}
  \sup_{x\in (-\delta,\delta)} \esssup_{\omega\in\Omega}|\vp(t,\omega,a_\alpha(\omega)+x)-a_\alpha(\theta_t\omega)|> \frac{\sqrt\alpha}{4} \,,
  \end{displaymath}
  which contradicts to \eqref{Eq2} and the proof is complete.
\end{proof}

For the description of the bifurcation via finite-time properties, consider a compact time interval $I=[0,T]$ and define the corresponding \emph{finite-time Lyapunov exponent} associated with the invariant measure $a_\alpha(\omega)$ by
\begin{displaymath}
  \lambda^{T,\omega}_\alpha:=\frac{1}{T}\ln\abvallr{\frac{\partial \vp_\alpha}{\partial x}(T,\omega,a_{\alpha}(\omega))}\,.
\end{displaymath}
Clearly, the (classical) Lyapunov exponent $\lambda_\alpha^\infty$ associated with the random fixed point $a_\alpha(\omega)$ is given by
\begin{displaymath}
  \lambda_\alpha^\infty=\lim_{T\to\infty}\lambda^{T,\omega}_\alpha\,.
\end{displaymath}
In contrast to classical Lyapunov exponent, the finite-time Lyapunov exponent is, in general, a non-constant random variable.
\begin{theorem}[Random pitchfork bifurcation, finite-time description]\label{Finite-timeBifurcation}
  Consider the stochastic differential equation \eqref{Sde} with the unique random attracting fixed point $\set{a_\alpha(\omega)}_{\omega\in\Omega}$. For any finite time interval $[0,T]$, let $\lambda_\alpha^{T,\omega}$ denote the finite-time Lyapunov exponent associated with $\set{a_\alpha(\omega)}_{\omega\in\Omega}$. Then the following statements hold:
  \begin{itemize}
    \item [(i)]
    For $\alpha<0$, the random attractor $\set{a_\alpha(\omega)}_{\omega\in\Omega}$ is finite-time attractive, i.e.
    \begin{displaymath}
    \lambda_\alpha^{T,\omega}\leq \alpha<0 \fa \omega\in\Omega\,.
    \end{displaymath}
    \item [(ii)]
    For $\alpha>0$, the random attractor $\set{a_\alpha(\omega)}_{\omega\in\Omega}$ is not finite-time attractive, i.e.
    \begin{displaymath}
    \mP\big\{\omega\in\Omega: \lambda_\alpha^{T,\omega}>0\big\}>0.
    \end{displaymath}
  \end{itemize}
\end{theorem}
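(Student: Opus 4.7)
My plan is to reduce both parts to a single explicit formula for the finite-time Lyapunov exponent, obtained from the variational equation along the random fixed point.

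\textbf{Step 1: Explicit formula.} Since the noise in \eqref{Sde} is additive, differentiating $\varphi_\alpha(t,\omega,x)$ with respect to $x$ gives a pathwise (random) linear ODE with no stochastic integral:
\begin{equation*}
  \frac{\rmd}{\rmd t}\frac{\partial \varphi_\alpha}{\partial x}(t,\omega,x)
    =
  \opintb{\alpha - 3\varphi_\alpha(t,\omega,x)^2}\,\frac{\partial \varphi_\alpha}{\partial x}(t,\omega,x)\,,
  \qquad
  \frac{\partial \varphi_\alpha}{\partial x}(0,\omega,x)=1\,.
\end{equation*}
Evaluating at $x=a_\alpha(\omega)$ and using the fixed-point identity $\varphi_\alpha(s,\omega,a_\alpha(\omega))=a_\alpha(\theta_s\omega)$ yields
\begin{equation*}
  \lambda_\alpha^{T,\omega}
    =
  \frac{1}{T}\int_0^T \opintb{\alpha - 3\,a_\alpha(\theta_s\omega)^2}\,\rmd s
    =
  \alpha - \frac{3}{T}\int_0^T a_\alpha(\theta_s\omega)^2\,\rmd s\,.
\end{equation*}

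\textbf{Step 2: Case $\alpha<0$.} Since the integrand $a_\alpha(\theta_s\omega)^2$ is non-negative for every $\omega$ and every $s$, the formula immediately gives $\lambda_\alpha^{T,\omega}\le\alpha<0$ for all $\omega\in\Omega$, which is assertion~(i).

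\textbf{Step 3: Case $\alpha>0$.} The goal is to produce a positive-probability event on which the time-average $\frac{1}{T}\int_0^T a_\alpha(\theta_s\omega)^2\,\rmd s$ is strictly less than $\alpha/3$. Fix $\eps\in(0,\sqrt{\alpha/3})$, so that $3\eps^2<\alpha$. Applying \propref{Prp1} with this $\eps$ and the prescribed time $T$ produces a measurable set $\cA$ of positive measure such that $a_\alpha(\theta_s\omega)\in(-\eps,\eps)$ for every $s\in[0,T]$ and $\omega\in\cA$. For any such $\omega$,
\begin{equation*}
  \lambda_\alpha^{T,\omega}
    =
  \alpha - \frac{3}{T}\int_0^T a_\alpha(\theta_s\omega)^2\,\rmd s
    \;\ge\;
  \alpha - 3\eps^2
    \;>\;
  0\,,
\end{equation*}
so $\mP\setb{\omega:\lambda_\alpha^{T,\omega}>0}\ge \mP(\cA)>0$, proving~(ii).

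The substance of the argument lies entirely in Step~1 (the variational-equation computation, which is clean because the noise is additive) and in the nontrivial localization result \propref{Prp1}, which was proved in the preceding subsection. Once both are in hand, neither case presents a genuine obstacle; the only care needed is to choose $\eps$ small enough that $3\eps^2<\alpha$ strictly, so that the bound $\lambda_\alpha^{T,\omega}>0$ is strict and independent of $T$.
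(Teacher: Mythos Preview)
Your proof is correct and follows essentially the same approach as the paper: both derive the explicit formula $\lambda_\alpha^{T,\omega}=\alpha-\frac{3}{T}\int_0^T a_\alpha(\theta_s\omega)^2\,\rmd s$ from the linearized equation, and for (ii) both invoke \propref{Prp1} to confine $a_\alpha(\theta_s\omega)$ to a small interval (the paper takes $\eps=\sqrt{\alpha}/2$, yielding $\lambda_\alpha^{T,\omega}\ge\alpha/4$, whereas you take any $\eps<\sqrt{\alpha/3}$). The only cosmetic difference is that for (i) the paper appeals to the contraction estimate of \theoref{Theorem_1}(i) rather than reading off the bound from the explicit formula; your direct argument is equally valid.
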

\begin{proof}
  (i) follows directly from \theoref{Theorem_1} (i).

  (ii) Choose $\eps:=\frac{\sqrt\alpha}{2}>0$. According to Proposition \ref{Prp1}, there exists a measurable set $\cA\in \cF_{-\infty}^T$ of positive probability such that
  \begin{displaymath}
    a_\alpha(\theta_s\omega)\in (-\eps,\eps)\fa s\in [0,T]\,.
  \end{displaymath}
  Let $\omega\in\cA$ be arbitrary and we will estimate $\lambda_\alpha^{T,\omega}$. Note that the linearized equation along the random fixed point $a_\alpha(\omega)$ is given by
  \begin{displaymath}
    \dot \xi(t)=(\alpha-3a_\alpha(\theta_t\omega)^2)\xi(t)\,.
  \end{displaymath}
  We thus get
  \begin{displaymath}
    \lambda_{\alpha}^{T,\omega}
    =
    \alpha-\frac{1}{T}\int_0^T 3 a_\alpha(\theta_t\omega)^2\rmd t
    \geq
    \frac{\alpha}{4}\,,
  \end{displaymath}
  which completes the proof.
\end{proof}


This theorem implies that the change in the signature of finite-time Lyapunov exponents indicates a qualitative change in the dynamics. This means that the bifurcation is observable in practice, since finite-time Lyapunov exponents are numerically computable quantities. Note that the numerical approximation of classical Lyapunov exponents is difficult in general. In the special case of random matrix products with positive matrices, however, \cite{Pollicott_10} established explicit bounds for the numerical approximation of (classical) Lyapunov exponents recently.

\subsection{The dichotomy spectrum at the bifurcation point}\label{subsec3}

We will compute the dichotomy spectrum of the linearization around the unique random attracting fixed point $\set{a_\alpha(\omega)}$ of the system \eqref{Sde}. As a direct consequence, we observe that hyperbolicity is lost at the bifurcated point $\alpha=0$.

\begin{theorem}\label{DichotomySpetrumc-Bifurcation}
  Let $\Phi_\alpha(t,\omega):=\frac{\partial\phi_\alpha}{\partial x}(t,\omega,a_\alpha(\omega))$ denote the linearized random dynamical system along the random fixed point $a_\alpha(\omega)$. Then the dichotomy spectrum $\Sigma_\alpha$ of $\Phi_\alpha$ is given by
  \begin{displaymath}
  \Sigma_\alpha=[-\infty,\alpha]\fa \alpha\in \R\,.
  \end{displaymath}
\end{theorem}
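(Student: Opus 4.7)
The plan is to integrate the linearization explicitly and then exploit the one-dimensionality of the problem: only the projectors $P\equiv 0$ and $P\equiv\1$ are available, and the Spectral Theorem (\theoref{lin.theo1}) forces $\Sigma_\alpha$ to be a single closed interval in $\Rq$. Solving $\dot\xi=(\alpha-3a_\alpha(\theta_t\omega)^2)\xi$ along the random fixed point gives the explicit formula
\begin{displaymath}
\Phi_\alpha(t,\omega)=\exp\!\left(\alpha t-3\int_0^t a_\alpha(\theta_s\omega)^2\,\rmd s\right),
\end{displaymath}
so $|\Phi_\alpha(t,\omega)|\le e^{\alpha t}$ for all $t\ge 0$. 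Consequently $P_\gamma\equiv\1$ delivers an exponential dichotomy for every $\gamma>\alpha$ (with $K=1$ and dichotomy constant $a=(\gamma-\alpha)/2$), showing $(\alpha,\infty]\subset\Rq\setminus\Sigma_\alpha$.

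Next I would show $\alpha\in\Sigma_\alpha$ by ruling out both admissible projectors at growth rate $\alpha$. The choice $P_\alpha\equiv 0$ fails because $|\Phi_\alpha(-s,\omega)|=e^{-\alpha s+3\int_{-s}^0 a_\alpha(\theta_u\omega)^2\,\rmd u}\ge e^{-\alpha s}$ cannot be dominated by $Ke^{-(\alpha+a)s}$ for large $s$. The choice $P_\alpha\equiv\1$ with constants $K\ge 1$, $a>0$ fails via \propref{Prp1}: taking $\eps<\sqrt{a/3}$ produces, for every $T>0$, a positive-probability set on which $a_\alpha(\theta_s\omega)^2<\eps^2$ throughout $[0,T]$, and on that set $|\Phi_\alpha(T,\omega)|\ge e^{(\alpha-3\eps^2)T}$, which exceeds $Ke^{(\alpha-a)T}$ as soon as $T>(\ln K)/(a-3\eps^2)$.

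For $-\infty\in\Sigma_\alpha$ I would appeal to the boundedness criterion given by the proposition immediately preceding \examref{Example1}: it is enough to prove $\esssup_\omega \alpha^-(\omega)=\infty$, where $\alpha^-(\omega)=\ln^+\sup_{t\in[0,1]}\exp(-\alpha t+3\int_0^t a_\alpha(\theta_s\omega)^2\,\rmd s)$. The key input is a large-value analog of \propref{Prp1}: for every $M,\tau>0$, there is a set in $\cF_{-\infty}^\tau$ of positive probability on which $a_\alpha(\theta_s\omega)^2\ge M$ for all $s\in[0,\tau]$. Taking $\tau=1/2$ gives $\int_0^{1/2}a_\alpha(\theta_s\omega)^2\,\rmd s\ge M/2$, hence $\alpha^-(\omega)\ge (3M-\alpha)/2$ on that set, unbounded as $M\to\infty$. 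Combining $(\alpha,\infty]\subset \Rq\setminus\Sigma_\alpha$, $\alpha\in\Sigma_\alpha$, and $-\infty\in\Sigma_\alpha$ with \theoref{lin.theo1} yields $\Sigma_\alpha=[-\infty,\alpha]$.

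The main obstacle is the large-value analog of \propref{Prp1}. Near zero the deterministic flow $\dot x=\alpha x-x^3$ is an ally, pulling solutions toward the origin; near $\sqrt M$ the drift has magnitude of order $M^{3/2}$ and points sharply away from the target, so one must rely on the noise to hold $a_\alpha$ close to $\sqrt M$ over an interval of fixed length. A natural route combines the strict positivity of the stationary density $p_{\alpha,\sigma}$ (so $\mP\{a_\alpha(\omega)\in(\sqrt M,\sqrt M+1)\}>0$) with a Girsanov-type absolute continuity of the SDE law with respect to the Wiener law on $[0,\tau]$, ensuring that Brownian paths staying near the value $\sqrt M$ translate into SDE paths with the same property on a set of positive probability; the independence of $\cF_{-\infty}^0$ from $\cF_0^\tau$ then allows these two ingredients to be paired into a single event of positive measure.
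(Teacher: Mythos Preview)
Your approach is correct and structurally different from the paper's. The paper shows directly that every $\gamma\in(-\infty,\alpha]$ lies in $\Sigma_\alpha$, ruling out both projectors $P_\gamma\equiv\1$ and $P_\gamma\equiv 0$ at each such $\gamma$; you instead verify only the two endpoints $\alpha$ and $-\infty$ and invoke the Spectral Theorem to fill in the interval, which is a clean reduction. Your elimination of $P_\alpha\equiv 0$ via the crude bound $|\Phi_\alpha(-s,\omega)|\ge e^{-\alpha s}$ is simpler than anything the paper does at that point (though it only works at the top endpoint $\gamma=\alpha$, which is all you need). The substantive difference is how large values of $a_\alpha$ are produced. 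You state an abstract large-value analog of \propref{Prp1} and sketch a proof via positivity of the stationary density together with Girsanov-type absolute continuity; the paper instead writes down the explicit noise path $w(t)=\tfrac{t^4}{4}-\tfrac{\alpha t^2}{2}+t$, under which the SDE has the exact solution $x(t)=t$, and uses continuous dependence plus independence of $\cF_{-\infty}^0$ and $\cF_0^T$ to force $a_\alpha(\theta_t\omega)\approx t$ on a set of positive measure, yielding $\int_0^T a_\alpha(\theta_s\omega)^2\,\rmd s\gtrsim (T-1)^3/3$. Your route is more conceptual and would transfer to other nonlinearities; the paper's trick is entirely self-contained and avoids importing the support theorem or a separate lemma.
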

\begin{proof}
From the linearized equation along $a_\alpha(\omega)$
\begin{displaymath}
  \dot\xi(t)=(\alpha-3a_\alpha(\theta_t\omega)^2)\xi(t)\,,
\end{displaymath}
we derive that
\begin{equation}\label{Eq17}
  \Phi_\alpha(t,\omega)=\exp\left(\int_0^t \big(\alpha-3a_\alpha(\theta_s\omega)^2\big)\rmd s\right)\,.
\end{equation}
Consequently,
\begin{displaymath}
|\Phi_\alpha(t,\omega)|\leq e^{\alpha |t|}\fa t\in\R\,,
\end{displaymath}
which implies that $\Sigma_\alpha\subset (-\infty,\alpha]$. Thus, it is sufficient to show that $(-\infty,\alpha]\subset \Sigma_\alpha$. For this purpose, let $\gamma\in (-\infty,\alpha]$ be arbitrary. Suppose the opposite that $\Phi_\alpha$ admits an exponential dichotomy with growth rate $\gamma$ with an invariant projection $P_\gamma$ and positive constants $K,\eps$. We now consider two cases: (i) $P_\gamma=\id$ and (ii) $P_\gamma=0$:

\emph{Case (i).} $P_\gamma=\id$, i.e.~we have
\begin{equation}\label{Eq18}
  \Phi_\alpha(t,\omega)\leq K e^{(\gamma-\eps) t}\fa t\geq 0\,.
\end{equation}
Choose and fix $T>0$ such that $e^{\frac{\eps }{4}T}>K$. According to Proposition \ref{Prp1}, there exists a measurable set $\mathcal A\subset \cF_{-\infty}^T$ of positive measure such that
\begin{displaymath}
  a_\alpha(\theta_s\omega)\in \opintb{-\sqrt\eps /2, \sqrt\eps / 2} \fa \omega\in\mathcal A \mand s\in [0,T]\,.
\end{displaymath}
From \eqref{Eq17} we derive that
\begin{displaymath}
  |\Phi_\alpha(T,\omega)| \geq  e^{T\left(\alpha-\frac{3\eps}{4}\right)}> K e^{(\gamma-\eps)T}\,,
\end{displaymath}
which leads to a contradiction to \eqref{Eq18}.

\emph{Case (ii)}: $P_\gamma=0$, i.e.~we have
\begin{equation*}
  \Phi_\alpha(t,\omega)\geq \frac{1}{K} e^{(\gamma-\eps) t}\fa  t\geq 0\,,
\end{equation*}
which together with \eqref{Eq17} implies that
\begin{equation}\label{Eq18b}
  \frac{\ln K+(\alpha-\gamma)t}{3}\geq \int_0^t a_\alpha(\theta_s\omega)^2 \,\rmd s\,.
\end{equation}
Choose and fix $T>0$ such that
\begin{displaymath}
  \frac{(T-1)^3}{3}>\frac{\ln K+(\alpha-\gamma)T}{3}\,.
\end{displaymath}
Consider the following integral equation
\begin{displaymath}
  x(t)=\int_0^t \opintb{\alpha x(s)-x(s)^3} \rmd s +\frac{t^4}{4}-\alpha\frac{t^2}{2}+t\,.
\end{displaymath}
Clearly, the explicit solution of the above equation is $x(t)=t$. Due to the compactness, there exists $\eps>0$ such that for any $x(0)\in(-\eps,\eps)$ and $\omega(t)$ with $\sup_{t\in [0,T]}|\omega(t)-\frac{t^4}{4}+\alpha\frac{t^3}{3}-t|\leq \eps$ then the solution $x(t)$ of the following equation
\begin{displaymath}
  x(t)=x(0)+\int_0^t (\alpha x(s)-x(s)^3)\,\rmd s+\omega(t)
\end{displaymath}
satisfies that $\sup_{t\in [0,T]}|x(t)-t|\leq 1$. According to Proposition \ref{Prp1}, there exists a measurable set $\mathcal A^{-}_\eps\subset \cF_{-\infty}^0$ of positive measure such that $a_\alpha(\omega)\in (-\eps,\eps)$ for all $\omega\in\mathcal A$. Define $\mathcal A_+\subset \cF_{0}^T$ by
\begin{displaymath}
  \mathcal A^+_\eps:=\setb{\omega\in \cF_{0}^T: \textstyle \sup_{t\in [0,T]}|\omega(t)-t^4/4+\alpha t^2/2-t|\leq \eps}\,.
\end{displaymath}
Therefore, for all $\omega\in \mathcal A^{-}_\eps\cap\mathcal A^+_\eps$, we get
\begin{displaymath}
  \sup_{t\in [0,T]}|a_\alpha(\theta_t\omega)-t|\leq 1\,,
\end{displaymath}
which implies that
\begin{displaymath}
  \int_0^T a_\alpha(\theta_s\omega)^2\,\rmd s \ge \frac{(T-1)^3}{3} > \frac{\ln K+(\alpha-\gamma)T}{3}\,,
\end{displaymath}
which leads to a contradiction to \eqref{Eq18b}. The proof is complete.
\end{proof}

We have seen in \theoref{Finite-timeBifurcation} that the bifurcation of \eqref{Sde} manifests itself also via finite-time Lyapunov exponents: before the bifurcation, all finite-time Lyapunov exponents are negative, and after the bifurcation, one observes positive finite-time Lyapunov exponents with positive probability. This implies in particular that the set of all Lyapunov exponents observed almost surely within a \emph{finite time} does not converge to the (asymptotic) Lyapunov exponents when time tends to infinity. The following theorem makes it precise that in contrast to asymptotic Lyapunov exponents, the dichotomy spectrum includes limits of the set of finite-time Lyapunov exponents.

\begin{theorem}\label{theo_1}
  Let $(\theta, \Phi)$ be a linear random dynamical system on $\R^d$ with dichotomy spectrum $\Sigma$. Define the finite-time Lyapunov exponent
  \begin{displaymath}
    \lambda(T,\omega,x) := \frac{1}{T} \ln \frac{\norm{\Phi(t,\omega)x}}{\norm{x}} \fa T>0,\,\omega\in\Omega\mand x\in\R^d\setminus\set{0}\,.
  \end{displaymath}
  Then
  \begin{displaymath}
    \lim_{T\to\infty} \esssup_{\omega\in\Omega} \!\!\!\!\sup_{x\in\R^d\setminus\set{0}} \!\!\!\!\!\!\lambda(T,\omega, x) = \sup \Sigma
  \end{displaymath}
  provided that  $\sup \Sigma<\infty$ and
  \begin{displaymath}
    \qandq \lim_{T\to\infty} \essinf_{\omega\in\Omega} \!\!\!\!\inf_{x\in\R^d\setminus\set{0}} \!\!\!\!\!\!\lambda(T,\omega, x) = \inf \Sigma
  \end{displaymath}
  provided that  $\inf \Sigma>-\infty$.
\end{theorem}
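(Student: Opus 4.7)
The plan is to prove the two identities in parallel, since they are dual under $\Phi(T,\omega)^{-1}=\Phi(-T,\theta_T\omega)$ and the $\theta$-invariance of $\mP$, which turns the infimum statement into an analogous supremum statement for $\esssup_{\omega}\ln\norm{\Phi(-T,\omega)}$. I focus on the first formula and set $\beta:=\sup\Sigma$,
\begin{displaymath}
  h(T):=\esssup_{\omega\in\Omega}\norm{\Phi(T,\omega)}\,,\qquad g(T):=\frac{\ln h(T)}{T}\,,
\end{displaymath}
so that $g(T)=\esssup_{\omega}\sup_{x\ne 0}\lambda(T,\omega,x)$. The cocycle property together with $\theta_s$-invariance of $\mP$ yields $h(s+t)\le h(t)h(s)$, so $\ln h$ is subadditive and the classical subadditive lemma gives $L:=\lim_{T\to\infty}g(T)=\inf_{T>0}g(T)$ (finite or $-\infty$); the local boundedness of $h$ needed for the continuous-time version is ensured by the short-time essential bound supplied by the boundedness proposition of the previous section. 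The task becomes to show $L=\beta$.

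For the upper bound $L\le\beta$, I fix $\gamma>\beta$. Because $\sup\Sigma<\infty$, the Spectral Theorem \theoref{lin.theo1} together with \lemmaref{lin.lemma3} yields that every $\gamma>\beta$ lies in $\rho$ with projector $P_\gamma\equiv\1$ (the rank is constant on the unbounded resolvent interval $(\beta,\infty)$ and must equal $d$). The dichotomy estimate $\norm{\Phi(t,\omega)}\le Ke^{(\gamma-\alpha)t}$ for $t\ge 0$ immediately gives $g(T)\le\gamma-\alpha+(\ln K)/T$; letting $T\to\infty$ and then $\gamma\downarrow\beta$ yields $L\le\beta$.

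For the lower bound $L\ge\beta$, I argue by contradiction: assume $L<\beta$ and choose $\gamma\in(L,\beta)$, so that $h(T)\le e^{\gamma T}$ for all $T$ in some tail $[T_0,\infty)$. The main technical step, and what I expect to be the main obstacle, is to upgrade this collection of pointwise-in-$T$ almost-sure bounds into one estimate $\norm{\Phi(t,\omega)}\le Ke^{\gamma t}$ valid for all $t\ge 0$ on a single full-measure set. My strategy is to intersect the exceptional null sets along integer times $n\ge T_0$, combine with the essential short-time bound $M:=\esssup_{\omega}\sup_{s\in[0,1]}\norm{\Phi(s,\omega)}<\infty$ provided by the boundedness proposition (precisely because $\beta<\infty$), and also intersect with the $\theta_n$-preimages of the short-time full-measure set in order to control $\norm{\Phi(s,\theta_n\omega)}$ at every integer $n$. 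Writing $t=n+s$ with $n\in\N$ and $s\in[0,1)$ then yields $\norm{\Phi(t,\omega)}\le Me^{\gamma n}\le K_1e^{\gamma t}$ for $t\ge T_0$, while a finite iteration of the short-time bound handles $t\in[0,T_0]$. Consequently $(\theta,\Phi)$ admits an exponential dichotomy with growth rate $\gamma+\delta$ and projector $\1$ for every $\delta>0$; choosing $\delta$ with $\gamma+\delta<\beta$, \lemmaref{lin.lemma4}(i) then forces $[\gamma+\delta,\infty)\subset\rho$ with projector $\1$, whence $\beta\le\gamma+\delta<\beta$, a contradiction.

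For the infimum identity, I would rerun the same scheme with $\norm{\Phi(T,\omega)}$ replaced by $\norm{\Phi(-T,\omega)}$ and $\beta$ by $-\inf\Sigma$; the required short-time control on $\norm{\Phi(-s,\omega)}$ now comes from the matching half of the boundedness proposition, equivalent to the standing hypothesis $\inf\Sigma>-\infty$, and the terminating contradiction uses \lemmaref{lin.lemma4}(ii) to produce a resolvent interval reaching above $\inf\Sigma$.
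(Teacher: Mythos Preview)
Your proposal is correct and follows essentially the same strategy as the paper: subadditivity of $T\mapsto\ln\esssup_\omega\norm{\Phi(T,\omega)}$ to get existence of the limit, the dichotomy estimate with projector $\1$ for the upper bound, and a contradiction argument producing an exponential dichotomy with growth rate strictly below $\sup\Sigma$ for the lower bound. The only noteworthy difference is that you control $\norm{\Phi(t,\omega)}$ on the initial segment $[0,T_0]$ via the short-time bound from the boundedness proposition (and you are explicit about intersecting countably many null sets to pass from ``for each $t$, a.e.\ $\omega$'' to ``a.e.\ $\omega$, for all $t$''), whereas the paper invokes the uniform bound $\norm{\Phi(t,\omega)}\le Ke^{t\sup\Sigma}$ directly; both routes are valid and yield the same conclusion.
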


\begin{proof}
  By definition of $\lambda(T,\omega,x)$, we get that for all $T,S\geq 0$
  \begin{displaymath}
    (T+S)\esssup_{\omega\in\Omega} \!\!\!\!\sup_{x\in\R^d\setminus\set{0}} \!\!\!\!\!\!\lambda(T+S,\omega, x)\leq T\esssup_{\omega\in\Omega} \!\!\!\!\sup_{x\in\R^d\setminus\set{0}} \!\!\!\!\!\!\lambda(T,\omega, x)+S\esssup_{\omega\in\Omega} \!\!\!\!\sup_{x\in\R^d\setminus\set{0}} \!\!\!\!\!\!\lambda(S,\omega, x).
  \end{displaymath}
  This implies that the sequence $(T\esssup_{\omega\in\Omega} \sup_{x\in\R^d\setminus\set{0}} \lambda(T,\omega, x))_{T\geq 0}$ is subadditive. We thus obtain
  \begin{displaymath}
    \lim_{T\to\infty} \esssup_{\omega\in\Omega} \!\!\!\!\sup_{x\in\R^d\setminus\set{0}} \!\!\!\!\!\!\lambda(T,\omega, x) =     \limsup_{T\to\infty} \esssup_{\omega\in\Omega} \!\!\!\!\sup_{x\in\R^d\setminus\set{0}} \!\!\!\!\!\!\lambda(T,\omega, x).
  \end{displaymath}
  We first prove that provided $\sup \Sigma<\infty$, we have
  \[
    \gamma:=\limsup_{T\to\infty} \esssup_{\omega\in\Omega} \sup_{x\in\R^d\setminus\set{0}} \lambda(T,\omega, x)=\sup \Sigma.
  \]
  Since $\sup \Sigma<\infty$ it follows that there exists $K>0$ such that
  \begin{equation}\label{Eq19}
  \norm{\Phi(t,\omega)x}\le K e^{t\sup\Sigma} \fa t\ge0\,.
  \end{equation}
  Assume first that $\gamma < \sup \Sigma$. This means that there exists a $t_0>0$ such that for all $t\ge  t_0$ and for almost all $\omega\in\Omega$, we have $\norm{\Phi(t,\omega)x}\le e^{t/2(\gamma+\sup\Sigma)}$. Thus, together with \eqref{Eq19}, we obtain for all $t\ge  0$ and for almost all $\omega\in\Omega$ that
  \begin{displaymath}
  \norm{\Phi(t,\omega)x}\le \widehat K e^{t/2(\gamma+\sup\Sigma)},\qquad \widehat K:=\max\{1,Ke^{t_0/2(\sup\Sigma-\gamma)}\}.
  \end{displaymath}
  Hence, $\sup\Sigma\leq \gamma+\sup\Sigma$, which is a contradiction. Assume now that $\gamma > \sup \Sigma$. This means in particular that $\sup\Sigma < \infty$. Hence, there exists a $K>0$ such that for almost all $\omega\in\Omega$, we have
  \begin{displaymath}
    \norm{\Phi(t,\omega)x} \le K e^{t/2(\gamma+\sup\Sigma)}\norm{x}\fa x\in\R^d\,.
  \end{displaymath}
  This leads to $\lambda(t,\omega,x)\le (\gamma+\sup\Sigma)/2$ for all $x\in\R^d\setminus\{0\}$, and thus,
  \[
    \gamma=\limsup_{T\to\infty} \esssup_{\omega\in\Omega} \sup_{x\in\R^d\setminus\set{0}} \lambda(T,\omega, x)\le (\gamma+\sup\Sigma)/2,
  \]
  which proves the first equality. Similarly, one can show that
  \begin{displaymath}
     \lim_{T\to\infty} \essinf_{\omega\in\Omega} \!\!\!\!\inf_{x\in\R^d\setminus\set{0}} \!\!\!\!\!\!\lambda(T,\omega, x) = \inf \Sigma
  \end{displaymath}
  provided that  $\inf \Sigma>-\infty$, which finishes the proof of this theorem.
\end{proof}

In the following example, we construct explicitly a linear random dynamical system with $\sup \Sigma=\infty$ but
  \begin{displaymath}
    \lim_{T\to\infty} \esssup_{\omega\in\Omega} \!\!\!\!\sup_{x\in\R^d\setminus\set{0}} \!\!\!\!\!\!\lambda(T,\omega, x) <\infty.
  \end{displaymath}
An example of a linear random dynamical system with $\inf \Sigma=-\infty$ but
\begin{displaymath}
  \lim_{T\to\infty} \essinf_{\omega\in\Omega} \!\!\!\!\inf_{x\in\R^d\setminus\set{0}} \!\!\!\!\!\!\lambda(T,\omega, x) >-\infty.
\end{displaymath}
can be constructed analogously. This example shows the importance of the assumption $\sup \Sigma<\infty$ or $\inf \Sigma>-\infty$ in the above theorem.

\begin{example}
  Similarly to \examref{Example1}, there exist infinitely many measurable sets $\set{U_n}_{n\in\N}$ of positive measure such that  $U_n,\theta U_n,\theta^2U_n$ for $n\in \N$ are pairwise disjoint. We define a random mapping $A:\Omega\rightarrow \R$ as follows:
  \[
  A(\omega)=\left\{
              \begin{array}{c@{\;:\;}l}
                \frac{1}{n} &  \omega\in U_n\cup \theta^2 U_n\,, n\in\N\,,\\[1.5ex]
                n &  \omega\in \theta U_n \,, n\in\N\,,\\[1.5ex]
                1 & \hbox{otherwise}.
              \end{array}
            \right.
  \]
  Let $\Phi$ denote the discrete-time random dynamical system generated by $A$. Since $\log\|A(\cdot)\|$ is neither bounded from above nor from below, we get that $\Sigma(\Phi)=(-\infty,\infty)$. On the other hand, it is easy to see that for all $T\geq 2$ we get that
  \[
  \esssup_{\omega\in\Omega}\log|\Phi(T,\omega)|=1,
  \]
  which implies that
  \[
  \lim_{T\to\infty}\esssup_{\omega\in\Omega}\frac{1}{T}\log|\Phi(T,\omega)|=0.
  \]
\end{example}

\section{Topological equivalence of random dynamical systems }\label{sec_1}

This section deals with topological equivalence of random dynamical system \cite{Imkeller_01_2,Imkeller_02_1,Li_05_1,Arnold_98_1}. This concept has not been used so far to study bifurcations of random dynamical systems, and the main aim of this section is to discuss topological equivalence for the stochastic differential equation \eqref{Sde} from \secref{sec_bif}, given by
\begin{displaymath}
  \rmd x= \opintb{\alpha x-x^3}\rmd t+ \sigma \rmd W_t\,.
\end{displaymath}

The concept of topological equivalence for random dynamical systems \cite[Definition 9.2.1]{Arnold_98_1} differs from the corresponding deterministic notion of topological equivalence in the sense that instead of one homeomorphism (mapping orbits to orbits), the random version is given by a family of homeomorphisms $\set{h_\omega}_{\omega\in\Omega}$. The precise definition is given as follows.

\begin{definition}[Topological equivalence]\label{deftopeq}
  Let $(\Omega,\cF,\mP)$ be a probability space,  $\theta:\T\times \Omega\to\Omega$ a metric dynamical system and $(X_1,d_1)$, $(X_2,d_2)$ be metric spaces. Then two random dynamical systems $(\vp_1:\T\times\Omega\times X_1\to X_1,\theta)$ and $(\vp_2:\T\times\Omega\times X_1\to X_1,\theta)$ are called \emph{topologically equivalent} if there exists a conjugacy $h:\Omega\times X_1\to X_2$ fulfilling the following properties:
  \begin{itemize}
    \item[(i)] For almost all $\omega\in\Omega$, the function $x \mapsto h(\omega,x)$ is a homeomorphism from $X_1$ to $X_2$.
    \item[(ii)] The mappings $\omega \mapsto h(\omega, x_1)$ and $\omega \mapsto h^{-1}(\omega, x_2)$ are measurable for all $x_1 \in X_1$ and $x_2 \in X_2$.
    \item[(iii)] The random dynamical systems $\vp_1$ and $\vp_2$ are \emph{cohomologous}, i.e.
    \begin{displaymath}
      \vp_2(t, \omega, h(\omega, x)) = h(\theta_t\omega, \vp_1(t, \omega, x)) \fa x\in X_1 \mbox{ and almost all } \omega\in\Omega\,.
    \end{displaymath}
  \end{itemize}
\end{definition}

A bifurcation is then described by means of a lack of topological equivalence at the bifurcation point. The following theorem says that near the bifurcation point $\alpha = 0$, all systems of \eqref{Sde} are equivalent.

\begin{theorem}\label{Theorem2}
  Let $(\theta:\R\times \Omega\to\Omega, \vp_\alpha:\R\times\Omega\times\R\to \R)$ denote the random dynamical system generated by system \eqref{Sde}. Then there exists an $\eps>0$ such that for all $\alpha\in(-\eps,\eps)$, the random dynamical systems $\vp_\alpha$ are topologically equivalent to the dynamical system $(e^{-t}x)_{t,x\in\R}$, i.e.~there exists a conjugacy $h:\Omega\times \R\to\R$ such that for almost all $\omega\in\Omega$, we have
  \begin{displaymath}
  \vp_\alpha(t,\omega,h(\omega,x))=h(\theta_t\omega,e^{-t}x)\fa t,x\in\R\,.
  \end{displaymath}
\end{theorem}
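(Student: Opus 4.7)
The plan is to reduce the problem to finding a conjugacy between the translated RDS (centered at the random fixed point) and the linear deterministic contraction. First I would set
\begin{displaymath}
\psi_\alpha(t,\omega,y) := \vp_\alpha(t,\omega,y+a_\alpha(\omega)) - a_\alpha(\theta_t\omega),
\end{displaymath}
which is itself an RDS with $\psi_\alpha(t,\omega,0)\equiv 0$ and — since the two copies of the SDE driving $\vp_\alpha$ and $a_\alpha$ share the same Wiener noise — is actually a random ODE (noise cancels). Finding $h$ is equivalent to finding $g:\Omega\times\R\to\R$ with $g(\omega,0)=0$, $g(\omega,\cdot)$ a monotone homeomorphism, satisfying the cohomology equation $\psi_\alpha(t,\omega,g(\omega,x)) = g(\theta_t\omega, e^{-t}x)$, and then putting $h(\omega,x) = a_\alpha(\omega) + g(\omega,x)$.

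The construction of $g$ is via a fundamental–domain argument exploiting the 1D monotonicity of $\psi_\alpha$ (distinct orbits cannot cross). On $\R_{>0}$, pick a reference value and define, for $x>0$,
\begin{displaymath}
g(\omega,x) := \psi_\alpha\bigl(-\log x,\, \theta_{\log x}\omega,\, 1\bigr) = \psi_\alpha(\log x,\omega,\cdot)^{-1}(1),
\end{displaymath}
with the analogous definition on $\R_{<0}$ using $-1$ as reference. A direct calculation with the cocycle property shows that this $g$ satisfies the cohomology equation on each open half-line. Measurability in $\omega$ is inherited from that of $\psi_\alpha$ and of $a_\alpha$.

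The essential step is verifying that $g_\omega := g(\omega,\cdot)$ extends to a homeomorphism of $\R$. Continuity at $0$ and the limit $g_\omega(x)\to 0$ as $x\to 0^+$ follow from the pullback attractivity of $a_\alpha$, i.e.\ $\psi_\alpha(t,\theta_{-t}\omega,1)\to 0$ as $t\to\infty$, a direct consequence of the Crauel–Flandoli result on existence of the attractor. The surjectivity $g_\omega(x)\to +\infty$ as $x\to +\infty$ is the pullback-repulsion statement: as $s\to\infty$, the forward flow $\psi_\alpha(s,\omega,\cdot)$ squeezes large values toward $0$, so its preimage of $1$ must diverge; this relies on a quantitative comparison that uses the dominance of the cubic term $-y^3$ at infinity (uniformly in $\omega$), which is where the smallness of $|\alpha|$ enters — it keeps the linearized rate $\alpha-3a_\alpha(\theta_s\omega)^2$ effectively negative and allows global control via the cubic barrier, much as in Step~1 of Proposition~\ref{Prp1}.

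The main obstacle I expect is strict monotonicity of $g_\omega$ in $x$. Using the cocycle, monotonicity of $s\mapsto \psi_\alpha(s,\omega,\cdot)^{-1}(1)$ is equivalent to $\psi_\alpha(h,\theta_s\omega,1)<1$ for all $h>0$ and a.a.\ $s,\omega$, which is not automatic because the noisy orbit $t\mapsto\psi_\alpha(t,\omega,1)$ can fluctuate above $1$ transiently. To repair this, I would either replace the constant reference $1$ by an adapted random reference level $c(\omega)$ constructed so that the pullback orbit through $(\omega,c(\omega))$ is strictly decreasing in $t$ on $\Omega$ (using the integrability estimate \eqref{IteratedLogaLaw} together with $|\alpha|$ small to dominate the drift), or regularise $g_\omega$ on a fundamental domain $[e^{-1},1]$ via an explicit monotone interpolation between $\psi_\alpha(1,\theta_{-1}\omega,1)$ and $1$ and extend by the cohomology identity; the equivariance then propagates monotonicity along the entire orbit. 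A completely analogous construction on $\R_{<0}$ and a final check that the two pieces glue continuously at $0$ (again via pullback attractivity) complete the proof; the smallness of $\eps$ appears in each place where a quantitative bound — absorbing set size, compensation of the linear drift by the cubic term, uniform backward divergence — is required.
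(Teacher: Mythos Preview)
Your reduction to the centered cocycle $\psi_\alpha$ and the overall plan (build a monotone conjugacy $g$ on each half-line, glue at $0$) matches the paper exactly, but your construction of $g$ is genuinely different from the paper's and the monotonicity obstacle you flag is real and is not resolved by your sketch. Your definition $g(\omega,x)=\psi_\alpha(\log x,\omega,\cdot)^{-1}(1)$ is monotone in $x$ if and only if $\psi_\alpha(h,\omega',1)<1$ for all $h>0$ and almost all $\omega'$, and this fails: by \propref{Prp1} the fixed point $a_\alpha(\theta_s\omega)$ visits any neighbourhood of $0$ with positive probability for arbitrarily long time, so the linearized rate $\alpha-3a_\alpha(\theta_s\omega)^2$ is positive on sets of positive measure whenever $\alpha>0$, and hence $\psi_\alpha(h,\omega',1)$ can exceed $1$. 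Your ``random reference level'' fix would require a tempered $c(\omega)>0$ with $\psi_\alpha(h,\omega,c(\omega))<c(\theta_h\omega)$ for all $h>0$, which is essentially a random Lyapunov function and is not supplied; the fundamental-domain interpolation idea needs a moving fundamental domain for the nonautonomous time-one map and you do not indicate how to choose it measurably and compatibly along the orbit.

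The paper sidesteps all of this with a different and cleaner device: for $x>0$ it defines $r(\omega,x)$ implicitly by the integral condition
\[
\int_{r(\omega,x)}^{\infty}\psi_\alpha(s,\omega,x)\,\rmd s \;=\; 1,
\]
and sets $g(\omega,x)=e^{r(\omega,x)}$. Since $\psi_\alpha(s,\omega,x)>0$ for $x>0$, the map $r\mapsto\int_r^\infty\psi_\alpha(s,\omega,x)\,\rmd s$ is strictly decreasing, so $r(\omega,x)$ is well defined; and strict monotonicity of $\psi_\alpha$ in $x$ makes $x\mapsto r(\omega,x)$ strictly increasing automatically, with no need to control transient excursions of any single orbit. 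The cocycle identity for $\psi_\alpha$ gives $r(\omega,x)=r(\theta_s\omega,\psi_\alpha(s,\omega,x))+s$, hence the conjugacy relation. The smallness of $|\alpha|$ enters at exactly one point: one chooses $\eps$ so that $\delta:=\tfrac{1}{4}\E[a_\alpha^2]-\alpha>0$, and then Birkhoff's ergodic theorem applied to $s\mapsto a_\alpha(\theta_s\omega)^2$ yields $\psi_\alpha(t,\omega,x)\le e^{-\delta t/2}x$ for large $t$ (and the reverse inequality for $t\to-\infty$), which is precisely what makes the improper integral finite at $+\infty$ and divergent at $-\infty$. Your account of where $\eps$ is needed (absorbing sets, cubic barrier, backward divergence) is not how the paper uses it.
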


\begin{proof}
  Let $a_\alpha(\omega)$ denote the unique random fixed point of \eqref{Sde}. According to the results in \cite{Crauel_98_1}, we obtain that
  \begin{displaymath}
    \E a_\alpha(\omega)^2= \frac{\int_{-\infty}^\infty u^2 \exp\left(\frac{1}{\sigma^2}\opintb{\alpha u^2-\frac{1}{2}u^4}\right)\,\rmd u}{\int_{-\infty}^\infty\exp\left(\frac{1}{\sigma^2}\opintb{\alpha u^2-\frac{1}{2}u^4}\right)\,\rmd u}\,.
  \end{displaymath}
  Therefore,
  \begin{displaymath}
    \lim_{\alpha\to 0} \E a_\alpha(\omega)^2= \frac{\int_{-\infty}^\infty u^2 \exp\opintb{{-}\frac{u^4}{2\sigma^2}}\,\rmd u}{\int_{-\infty}^\infty \exp\opintb{{-}\frac{u^4}{2\sigma^2}}\,\rmd u}\,.
  \end{displaymath}
  Then there exists an $\eps>0$ such that for all $\alpha\in(-\eps,\eps)$, we have
  \begin{displaymath}
    \delta:=\frac{\E a_\alpha(\omega)^2}{4}-\alpha>0\,.
  \end{displaymath}
  For any $x\in\R$ and $(t,\omega)\in\R\times\Omega$, we define
  \begin{equation}\label{Eq4}
    \psi(t,\omega,x):=\vp_\alpha(t,\omega,x+a_\alpha(\omega))-a_\alpha(\theta_t\omega)\,.
  \end{equation}
  By using the transformation function $g(\omega,x):=x-a_\alpha(\omega)$, the random dynamical systems $\vp_\alpha$ and $\psi$ are topologically equivalent. Hence, it is sufficient to show that $\psi$ is topologically equivalent to the dynamical system $(e^{-t}x)_{t,x\in\R}$. We first summarise some properties of $\psi$:
  \begin{itemize}
    \item [(i)]
    Since $a_\alpha(\omega)$ is a random fixed point of $\vp_\alpha$, it follows that
    \begin{displaymath}
      \psi(t,\omega,0)=0 \fa t\in\R \mand \omega\in\Omega\,.
    \end{displaymath}
    \item [(ii)]
    Due to the monotonicity of $\vp_\alpha$, for $x_1>x_2$, we have
    \begin{displaymath}
      \psi(t,\omega,x_1)>\psi(t,\omega,x_2) \fa  t\in\R \mand \omega\in\Omega\,.
    \end{displaymath}
    \item [(iii)]
    From \eqref{Sde}, we derive that
    \begin{align*}
    \psi(t,\omega,x)
    &=
    x+\int_0^t \psi(s,\omega,x) \opintb{\alpha-a_\alpha(\theta_s\omega)^2-a_\alpha(\theta_s\omega)\vp_\alpha(s,\omega,a_\alpha(\omega)+x)-\\
    &\qquad\qquad\qquad\qquad\qquad\vp_\alpha(s,\omega,a_\alpha(\omega)+x)^2}\,\rmd s\,.
    \end{align*}
    Consequently,
    \begin{align*}
      \psi(t,\omega,x)
      &=
      x\exp\opintBg{\int_0^t\alpha-a_\alpha(\theta_s\omega)^2-a_\alpha(\theta_s\omega)\vp_\alpha(s,\omega,a_\alpha(\omega)+x)\\
      &
      \qquad\qquad\qquad\qquad\qquad-\vp_\alpha(s,\omega,a_\alpha(\omega)+x)^2\,\rmd s}\,.
    \end{align*}
  \end{itemize}
  According to Birkhoff's ergodic theorem, there exists an invariant set $\widetilde \Omega$ of full measure such that
  \begin{equation}\label{Eq3}
    \lim_{t\to\pm\infty}\frac{1}{t}\int_0^t a_\alpha(\theta_s\omega)^2\,\rmd s=\E a_\alpha(\omega)^2\,.
  \end{equation}
  Choose and fix $\omega\in\widetilde\Omega$. From \eqref{Eq3}, there exists $T>0$ such that for all $|t|>T$ we have
  \begin{displaymath}
    \left|\frac{1}{t}\int_0^t a_\alpha(\theta_s\omega)^2\,\rmd s-\E a_\alpha(\omega)^2\right|\leq \frac{\delta}{2}\,.
  \end{displaymath}
  In what follows, we will show the following estimates on $\psi(t,\omega,x)$ for $x>0$:
  \begin{itemize}
  \item [(iii)] For $t\geq T$, we get
  \begin{displaymath}
    \psi(t,\omega,x)
    \le
     x\exp\opintlr{\int_0^t \alpha-\frac{a_\alpha(\theta_s\omega)^2}{4}\,\rmd s}\leq  e^{-\delta t/2}x.
  \end{displaymath}
  \item [(iv)] For $t\leq -T$, we get
  \begin{displaymath}
    \psi(t,\omega,x)
    \geq
     x\exp\opintlr{\int_0^t \alpha-\frac{a_\alpha(\theta_s\omega)^2}{4}\,\rmd s}
     \geq e^{-\delta t/2}x\,.
  \end{displaymath}
  \end{itemize}
  Consequently, we get that
  \begin{displaymath}
  \lim_{r\to\infty}\int_r^\infty \psi(s,\omega,x)\,\rmd s=0
  \qandq
  \lim_{r\to-\infty}\int_r^\infty \psi(s,\omega,x)\,\rmd s=\infty.
  \end{displaymath}
  Hence, there exists a unique $r(\omega,x)$ such that
  \begin{equation}\label{Eq5}
  \int_{r(\omega,x)}^\infty \psi(s,\omega,x)\,\rmd s=1.
  \end{equation}
  Similarly, $r(\omega,x)$ for $x<0$ is defined to satisfy
  \begin{equation}\label{Eq6}
  \int_{r(\omega,x)}^\infty \psi(s,\omega,x)\,\rmd s=-1.
  \end{equation}
  Using the cocycle property of $\psi$, we obtain that
  \begin{equation}\label{Eq7}
  r(\omega,x)=r(\theta_s\omega,\psi(s,\omega,x))+s.
  \end{equation}
  Define a function
  \begin{displaymath}
  g(\omega,x)
  =
  \left\{
    \begin{array}{c@{\;:\;}l}
      e^{r(\omega,x)} & x>0\,, \\
      0 & x=0\,, \\
      -e^{r(\omega,x)} & x<0\,.
    \end{array}
  \right.
  \end{displaymath}
  We will now show that $g$ transforms the random dynamical system $\psi$ to the dynamical system $(e^{-t}x)_{t,x\in\R}$:
  \begin{itemize}
  \item [(i)] For any $x>0$, we have $\psi(s,\omega,x)>0$ and thus from the definition of the function $g$ it follows that
  \begin{displaymath}
  g(\theta_s\omega,\psi(s,\omega,x))
  =
  e^{r(\theta_s\omega,\psi(s,\omega,x))},
  \end{displaymath}
  which implies together with \eqref{Eq7} that
  \begin{displaymath}
  g(\theta_s\omega,\psi(s,\omega,x))=e^{r(\omega,x)-s}=e^{-s}\psi(s,\omega,x)\,.
  \end{displaymath}
  Similarly, for $x<0$ we also have $g(\theta_s\omega,\psi(s,\omega,x))=e^{-s}\psi(s,\omega,x)$ for all $s\in\R,\omega\in\Omega$.
  \item [(ii)] Choose and fix $\omega\in\widetilde \Omega$. We will show that $g_\omega:\R\rightarrow \R, x\mapsto g(\omega,x)$ is a homeomorphism.\\
  \emph{Injectivity}: From the definition of $g$, it is easily seen that for $x_1>0>x_2$ we have
  \begin{displaymath}
  g_\omega(x_1)>0>g_\omega(x_2).
  \end{displaymath}
  On the other hand, based on strict monotonicity of $\psi$ we get that for $x_1>x_2>0$
  \begin{displaymath}
  \int_{r(\omega,x_2)}^\infty \psi(s,\omega,x_1)\,\rmd s>\int_{r(\omega,x_2)}^\infty \psi(s,\omega,x_2)\,\rmd s=1.
  \end{displaymath}
  Consequently, $r(\omega,x_1)
  >r(\omega,x_2)$ and thus $g_\omega(x_1)>g_\omega(x_2)$. Similarly, for $0>x_1>x_2$ we also have $g_\omega(x_1)>g_\omega(x_2)$. Therefore, $g_\omega$ is strictly increasing and thus injective.\\
  \emph{Continuity}: We first show that $\lim_{x\to 0+}g_\omega(x)=0$. Let $\eps>0$ be arbitrary. Choose $\tilde{T}>T$ such that $\frac{2}{\delta}e^{-\frac{\delta \tilde{T}}{2}}<\frac{1}{3}, e^{-\tilde{T}}<\eps,$ and for all $t\geq \tilde{T}$ we get
  \begin{displaymath}
    \psi(t,\omega,x)\leq e^{-\frac{\delta t}{2}}x\,.
  \end{displaymath}
  As a consequence, for all $x\in (0,1)$ we get
  \begin{equation}\label{Eq8}
    \int_{\tilde{T}}^\infty \psi(s,\omega,x)\,\rmd s\leq \int_{\tilde{T}}^\infty e^{-\frac{\delta s}{2}}\,\rmd s<\frac{\eps}{3}\,.
  \end{equation}
  Since $\lim_{x\to 0} \psi(s,\omega,x)=0$ and $\clintb{{-}\tilde{T},\tilde{T}}$ is a compact interval, there exists $\delta^*$ such that
  \begin{displaymath}
  \int_{-\tilde{T}}^{\tilde{T}} \psi(s,\omega,\delta^*)\,\rmd s<\frac{\eps}{3},
  \end{displaymath}
  which together with \eqref{Eq8} implies that
  \begin{displaymath}
  \int_{-\tilde{T}}^\infty \psi(s,\omega,x)\,\rmd s<\frac{2}{3}\fa  x\in \big(0,\min(1,\delta^*)\big)\,.
  \end{displaymath}
  Therefore, $r(\omega,x)<-\tilde{T}$ and thus $g_\omega(x)<\eps$ for all $x\in \big(0,\min(1,\delta^*)\big)$. Hence, $\lim_{x\to 0+}g_\omega(x)=0$ and similarly we also have $\lim_{x\to 0-}g_\omega(x)=0$ and thus $g_\omega$ is continuous at $0$. The continuity of $g$ on the whole real line can be proved in a similar way.\\
  \emph{Surjectivity}: It is easy to prove surjectivity from
  \begin{displaymath}
  \lim_{x\to\infty} g_\omega(x)=\infty \qandq\lim_{x\to-\infty} g_\omega(x)=-\infty\,.
  \end{displaymath}
  \end{itemize}
  This finishes the proof of this theorem.
\end{proof}

This theorem implies that the stochastic differential equation \eqref{Sde} does not admit a bifurcation at $\alpha=0$ which is induced by the above concept of topological equivalence. In addition, because of the observations in \theoref{DichotomySpetrumc-Bifurcation}, this concept of equivalence is not in correspondence with the dichotomy spectrum (linear systems which are hyperbolic and non-hyperbolic can be equivalent).

We will show now that the concept of a \emph{uniform topological equivalence} is the right tool to obtain the bifurcations studied in this paper.

\begin{definition}[Uniform topological equivalence]
  Let $(\Omega,\cF,\mP)$ be a probability space,  $\theta:\T\times \Omega\to\Omega$ a metric dynamical system and $(X_1,d_1)$, $(X_2,d_2)$ be metric spaces. Then two random dynamical systems $(\vp_1:\T\times\Omega\times X_1\to X_1,\theta)$ and $(\vp_2:\T\times\Omega\times X_1\to X_1,\theta)$ are called \emph{uniformly topologically equivalent} with respect to a random fixed point $\set{a_\alpha(\omega)}_{\omega\in\Omega}$ of $\vp_1$ if there exists a conjugacy $h:\Omega\times X_1\to X_2$ fulfilling the following properties:
  \begin{itemize}
    \item[(i)] For almost all $\omega\in\Omega$, the function $x \mapsto h(\omega,x)$ is a homeomorphism from $X_1$ to $X_2$.
    \item[(ii)] The mappings $\omega \mapsto h(\omega, x_1)$ and $\omega \mapsto h^{-1}(\omega, x_2)$ are measurable for all $x_1 \in X_1$ and $x_2 \in X_2$.
    \item[(iii)] The random dynamical systems $\vp_1$ and $\vp_2$ are \emph{cohomologous}, i.e.
    \begin{displaymath}
      \vp_2(t, \omega, h(\omega, x)) = h(\theta_t\omega, \vp_1(t, \omega, x)) \fa x\in X_1 \mbox{ and almost all } \omega\in\Omega\,.
    \end{displaymath}
    \item[(iv)]
    We have
    \begin{displaymath}
      \lim_{\delta\to0} \esssup_{\omega\in\Omega} \sup_{x\in B_\delta(a_\alpha(\omega))} d_2(h(\omega, x), h(\omega, a_\alpha(\omega))) = 0
    \end{displaymath}
    and
    \begin{displaymath}
      \lim_{\delta\to0} \esssup_{\omega\in\Omega} \sup_{x\in B_\delta(h(\omega, a_\alpha(\omega)))} d_1(h^{-1}(\omega, x), a_\alpha(\omega)) = 0\,.
    \end{displaymath}
  \end{itemize}
\end{definition}

Note that, in comparison to the concept of topological equivalence (\defref{deftopeq}), we added (iv) to take uniformity into account.

We show now that uniform topological equivalence preserves local uniform attractivity.

\begin{proposition}\label{prop_1}
  Let $(\Omega,\cF,\mP)$ be a probability space,  $\theta:\T\times \Omega\to\Omega$ a metric dynamical system and $(X_1,d_1)$, $(X_2,d_2)$ be metric spaces, and let $(\vp_1:\T\times\Omega\times X_1\to X_1,\theta)$ and $(\vp_2:\T\times\Omega\times X_2\to X_2,\theta)$ be two random dynamical systems which are \emph{uniformly topologically equivalent} with respect to a random fixed point $\set{a_\alpha(\omega)}_{\omega\in\Omega}$ of $\vp_1$. Let $h:\Omega\times X_1\to X_2$ denote the conjugacy. Then $\set{a_\alpha(\omega)}_{\omega\in\Omega}$ is locally uniformly attractive for $\vp_1$ if and only if $\set{h(\omega, a_\alpha(\omega))}_{\omega\in\Omega}$ is locally uniformly attractive for $\vp_2$.
\end{proposition}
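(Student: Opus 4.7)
The plan is to prove both implications directly from the definition, with the key mechanism being the cohomology identity $\vp_2(t, \omega, h(\omega, x)) = h(\theta_t \omega, \vp_1(t, \omega, x))$ together with the uniform continuity properties (iv) of the conjugacy at the random fixed point. By symmetry (swapping $\vp_1 \leftrightarrow \vp_2$ and $h \leftrightarrow h^{-1}$, noting that $h(\omega, a_\alpha(\omega))$ is indeed a random fixed point of $\vp_2$, as the cohomology identity applied at $x = a_\alpha(\omega)$ immediately gives), it suffices to establish one direction; we present the forward one.

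Assume $\set{a_\alpha(\omega)}_{\omega \in \Omega}$ is locally uniformly attractive for $\vp_1$ with attraction radius $\delta_1 > 0$. Fix $\eps > 0$. The first step uses property (iv) of $h$: choose $\delta_3 > 0$ so small that
\begin{displaymath}
  \esssup_{\omega \in \Omega} \sup_{y \in B_{\delta_3}(a_\alpha(\omega))} d_2\opintb{h(\omega, y), h(\omega, a_\alpha(\omega))} < \eps\,.
\end{displaymath}
The second step invokes the local uniform attractivity of $a_\alpha$: there exists $T > 0$ such that for all $t \geq T$,
\begin{displaymath}
  \esssup_{\omega \in \Omega} \sup_{x \in B_{\delta_1}(a_\alpha(\omega))} d_1\opintb{\vp_1(t, \omega, x), a_\alpha(\theta_t \omega)} < \delta_3\,.
\end{displaymath}
The third step combines these: since $\theta$ is measure preserving, the uniform continuity inequality applied with $\omega$ replaced by $\theta_t \omega$ and $y$ replaced by $\vp_1(t, \omega, x)$ yields, for all $t \geq T$ and almost all $\omega$,
\begin{displaymath}
  \sup_{x \in B_{\delta_1}(a_\alpha(\omega))} d_2\opintb{h(\theta_t \omega, \vp_1(t, \omega, x)),\, h(\theta_t \omega, a_\alpha(\theta_t \omega))} < \eps\,.
\end{displaymath}
By the cohomology identity, the first argument of $d_2$ equals $\vp_2(t, \omega, h(\omega, x))$.

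It remains to translate the perturbation variable from $X_1$ back to $X_2$. Using the second half of property (iv), choose $\delta_2 > 0$ so that
\begin{displaymath}
  \esssup_{\omega \in \Omega} \sup_{z \in B_{\delta_2}(h(\omega, a_\alpha(\omega)))} d_1\opintb{h^{-1}(\omega, z), a_\alpha(\omega)} < \delta_1\,,
\end{displaymath}
so that for every such $z$, setting $x := h^{-1}(\omega, z)$ lies in $B_{\delta_1}(a_\alpha(\omega))$ and $z = h(\omega, x)$. Substituting into the previous estimate gives, for all $t \geq T$,
\begin{displaymath}
  \esssup_{\omega \in \Omega} \sup_{z \in B_{\delta_2}(h(\omega, a_\alpha(\omega)))} d_2\opintb{\vp_2(t, \omega, z),\, h(\theta_t \omega, a_\alpha(\theta_t \omega))} < \eps\,,
\end{displaymath}
which is precisely local uniform attractivity of $\set{h(\omega, a_\alpha(\omega))}_{\omega \in \Omega}$ for $\vp_2$ with radius $\delta_2$.

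No serious obstacle is expected; the argument is essentially a careful chase of quantifiers, and the one point that demands attention is that condition (iv) is stated as an $\esssup$ over $\omega$, so it applies equally well at $\theta_t \omega$ because of measure preservation. This is also why the weaker form of topological equivalence in \defref{deftopeq}, which provides only pointwise continuity of $h_\omega$, would not suffice: without uniformity in $\omega$, the chain of two continuity steps cannot be combined with the uniform attraction estimate. The converse implication proceeds identically, with the roles of $\vp_1, \vp_2$ and $h, h^{-1}$ swapped.
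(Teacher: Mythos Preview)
Your proof is correct and follows essentially the same approach as the paper: both arguments use property~(iv) to find a modulus $\gamma$ (your $\delta_3$), apply local uniform attractivity of $a_\alpha$ to land in $B_\gamma(a_\alpha(\theta_t\omega))$, invoke the cohomology identity, and then use the second half of~(iv) to transfer the $X_1$-ball back to an $X_2$-ball around $h(\omega,a_\alpha(\omega))$. Your presentation is in fact slightly more careful than the paper's, since you explicitly note why the $\esssup$ bound may be applied at $\theta_t\omega$ (measure preservation) and why the converse follows by symmetry.
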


\begin{proof}
  Suppose that $\set{a_\alpha(\omega)}_{\omega\in\Omega}$ is locally uniformly attractive for $\vp_1$ and let $\eta>0$. Then there exists a $\gamma>0$ such that
  \begin{displaymath}
    \esssup_{\omega\in\Omega} \sup_{x\in B_\gamma(a_\alpha(\omega))} d_2(h(\omega, x), h(\omega,a_\alpha(\omega))) \le \eta\,.
  \end{displaymath}
  Since $\set{a_\alpha(\omega)}_{\omega\in\Omega}$ is locally uniformly attractive for $\vp_1$, there exists a $\delta>0$ and a $T>0$ such that
  \begin{displaymath}
    \esssup_{\omega\in\Omega} \sup_{x\in B_\delta(a_\alpha(\omega))} d_1(\vp_1(t,\omega, x), a_\alpha(\theta_t\omega)) \le \frac{\gamma}{2} \fa t\ge T\,.
  \end{displaymath}
  Hence, for all $t\ge T$, we have
  \begin{displaymath}
    \esssup_{\omega\in\Omega} \sup_{x\in B_\delta(a_\alpha(\omega))} d_2(h(\theta_t\omega,\vp_1(t,\omega,x)), h(\theta_t\omega,a_\alpha(\theta_t\omega))) \le \eta\,.
  \end{displaymath}
  This means that for all $t\ge T$, we have
  \begin{displaymath}
    \esssup_{\omega\in\Omega} \sup_{x\in B_\delta(a_\alpha(\omega))} d_2(\vp_2(t,\omega, h(\omega, x)), h(\theta_t\omega,a_\alpha(\omega))) \le \eta\,,
  \end{displaymath}
  and there exists a $\beta>0$ such that
  \begin{displaymath}
    \esssup_{\omega\in\Omega} \sup_{x\in B_\beta(h(\omega, a_\alpha(\omega)))} d_1(h^{-1}(\omega, x), a_\alpha(\omega)) \le \frac{\delta}{2}\,.
  \end{displaymath}
  Finally, this means that for all $t\ge T$, we have
  \begin{displaymath}
    \esssup_{\omega\in\Omega} \sup_{x\in B_\beta(h(\omega,a_\alpha(\omega)))} d_2(\vp_2(t,\omega, x), h(\theta_t\omega,a_\alpha(\omega))) \le \eta\,,
  \end{displaymath}
  which finishes the proof that $\set{h(\omega, a_\alpha(\omega))}_{\omega\in\Omega}$ is locally uniformly attractive for $\vp_2$.
\end{proof}

As a corollary to this proposition, it follows that \eqref{Sde} admits a bifurcation.

\begin{theorem}\label{theo1}
  The stochastic differential equation \eqref{Sde} admits a random bifurcation at $\alpha=0$ which is induced by the concept of uniform topological equivalence.
\end{theorem}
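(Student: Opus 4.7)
The plan is to deduce Theorem \ref{theo1} as an almost direct corollary of \theoref{Theorem_1} and \propref{prop_1}, by showing that the negative-$\alpha$ and positive-$\alpha$ regimes of \eqref{Sde} cannot be linked by a uniform topological equivalence.

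First I would fix parameters $\alpha_1<0<\alpha_2$ and argue by contradiction: suppose that the random dynamical systems $\vp_{\alpha_1}$ and $\vp_{\alpha_2}$ generated by \eqref{Sde} are uniformly topologically equivalent with respect to the random fixed point $\set{a_{\alpha_1}(\omega)}_{\omega\in\Omega}$, via a conjugacy $h:\Omega\times\R\to\R$. The cohomology relation
\begin{displaymath}
  \vp_{\alpha_2}(t,\omega,h(\omega,x)) = h(\theta_t\omega,\vp_{\alpha_1}(t,\omega,x))
\end{displaymath}
evaluated at $x=a_{\alpha_1}(\omega)$ shows that $\omega\mapsto h(\omega,a_{\alpha_1}(\omega))$ is a random fixed point of $\vp_{\alpha_2}$. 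Since, by the results of \cite{Crauel_98_1} reviewed in \subsecref{subsec1}, $\vp_{\alpha_2}$ admits a unique random fixed point, we obtain $h(\omega,a_{\alpha_1}(\omega)) = a_{\alpha_2}(\omega)$ for almost every $\omega\in\Omega$.

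Next, I would apply \theoref{Theorem_1}(i) to conclude that $\set{a_{\alpha_1}(\omega)}_{\omega\in\Omega}$ is locally uniformly attractive for $\vp_{\alpha_1}$ (in fact globally uniformly exponentially attractive). By \propref{prop_1}, uniform topological equivalence preserves local uniform attractivity, so $\set{h(\omega, a_{\alpha_1}(\omega))}_{\omega\in\Omega} = \set{a_{\alpha_2}(\omega)}_{\omega\in\Omega}$ must also be locally uniformly attractive for $\vp_{\alpha_2}$. This directly contradicts \theoref{Theorem_1}(ii), which asserts that the random attracting fixed point of \eqref{Sde} fails to be locally uniformly attractive for $\alpha>0$. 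Therefore no such conjugacy $h$ exists, which establishes the bifurcation.

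I do not anticipate any genuine obstacle here, since all of the required ingredients have been assembled in previous sections; the only mild subtlety is the identification $h(\omega,a_{\alpha_1}(\omega)) = a_{\alpha_2}(\omega)$, which relies on uniqueness of the random fixed point of \eqref{Sde} as established by \cite{Crauel_98_1}. If desired, the statement could be strengthened by allowing the two parameter values to lie on either side of $\alpha=0$ arbitrarily close to the bifurcation point, since \theoref{Theorem_1} places no smallness restriction on $|\alpha_1|$ or $|\alpha_2|$; this would show that the bifurcation genuinely occurs at $\alpha=0$ and nowhere else in a neighbourhood of it.
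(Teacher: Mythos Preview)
Your proposal is correct and follows exactly the approach the paper takes: the paper's entire proof reads ``This is a direct consequence of \theoref{Theorem_1} and \propref{prop_1}.'' You have simply spelled out the contradiction argument that the paper leaves implicit, including the identification $h(\omega,a_{\alpha_1}(\omega))=a_{\alpha_2}(\omega)$ via uniqueness of the random fixed point, which is the only step requiring a moment's thought.
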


\begin{proof}
  This is a direct consequence of \theoref{Theorem_1} and \propref{prop_1}.
\end{proof}

\medskip

{\Large\centerline{\textbf{\underbar{Appendix}}}}

\bigskip

\emph{Metric dynamical systems.} Let $\cB(Y)$ denote the Borel $\sigma$-algebra of a metric space $Y$. Consider a time set $\T=\R$ or $\T =\Z$, and let $(\Omega,\cF,\mP)$ be a probability space. A $(\cB(\T)\otimes \cF, \cF)$-measurable function $\theta:\T\times \Omega \to \Omega$ is called a \emph{measurable dynamical system} if $\theta(0,\omega) =\omega$ and $\theta(t+s,\omega) =  \theta(t, \theta(s, \omega))$ for all $t,s\in\T$ and $\omega\in\Omega$. We use the abbreviation $\theta_t\omega$ for $\theta(t,\omega)$. A measurable dynamical system is said to be \emph{measure preserving} or \emph{metric} if $\mP\theta(t,A) = \mP A$ for all $t\in\T$ and $A\in\cF$, and such a dynamical system is called \emph{ergodic} if for any $A\in\cF$ satisfying $\theta_t A = A$ for all $t\in\T$, one has $\mP A\in\set{0,1}$. A particular metric dynamical system, which naturally is used when dealing with (one-dimensional) stochastic differential equations, is generated by the Brownian motion. More precisely, $\Omega:= C_0(\R,\R):=\set{\omega\in C(\R,\R): \omega(0)=0}$. Let $\Omega$ be equipped with the compact-open topology and the Borel $\sigma$-algebra $\cF:=\cB(C_0(\R, \R))$. Let $\mP$ denote the Wiener probability measure on $(\Omega, \cF)$. The metric dynamical system is then given by the Wiener shift $\theta : \R\times \Omega\to\Omega$, defined by $\theta(t, \omega(\cdot)):= \omega(\cdot+t)-\omega(t)$, and it is well-known that $\theta$ is ergodic \cite{Arnold_98_1}. On $(\Omega,\cF)$, we have the natural filtration
\begin{displaymath}
  \cF_s^t:= \sigma\opintb{\omega(u)-\omega(v): s\le u,v\le t} \fa s\le t\,,
\end{displaymath}
with $\theta_u^{-1} \cF_s^t= \cF_{s+u}^{t+u}$.

\emph{Invariant measures.} For a given random dynamical system ($\theta,\vp$), let $\Theta:\T\times \Omega\times X \to \Omega\times X$ denote the corresponding \emph{skew product flow}, given by $\Theta(t,\omega,x):= (\theta_t \omega, \vp(t,\omega)x)$. This is a measurable dynamical system on the extended phase space $\Omega\times X$. A probability measure $\mu$ on $(\Omega\times X,\cF\otimes \cB)$ is said to be an \emph{invariant measure} if
\begin{itemize}
  \item[(i)] $\mu(\Theta_t A)=\mu(A)$ for all $t\in\T$ and $A\in \cF\otimes\cB$,
  \item[(ii)] $\pi_\Omega \mu = \mP$,
\end{itemize}
where $\pi_\Omega \mu$ denotes the marginal of $\mu$ on $(\Omega,\cF)$. If the metric space $X$ is a Polish space, i.e., it is separable and complete, then an invariant measure $\mu$ admits a $\mP$-almost surely unique \emph{disintegration} \cite[Proposition 1.4.3]{Arnold_98_1}, that is a family of probability measures $(\mu_\omega)_{\omega\in\Omega}$ with
\begin{displaymath}
  \mu(A) = \int_\Omega\int_X \1_A(\omega,x) \,\rmd \mu_\omega(x) \,\rmd \mP(\omega)\,.
\end{displaymath}

\emph{Random sets.} A function $\omega \mapsto M(\omega)$ taking values in the subsets of the phase space $X$ of a random dynamical system is called a \emph{random set} if $\omega \mapsto d(x,M(\omega))$ is measurable for each $x \in X$, and we use the term $\omega$-fiber of $M$ for the set $M(\omega)$. We call $M$ \emph{closed} or \emph{compact} if all $\omega$-fibers are closed or compact, respectively. A random set $M$ is called \emph{invariant} with respect to the random dynamical system $(\theta,\vp)$ if $\vp(t,\omega)M(\omega) = M(\theta_t\omega)$ for all $t\in \R$ and $\omega\in\Omega$.

\emph{Random attractors.} A nonempty, compact and invariant random set $\omega\mapsto A(\omega)$ is called \emph{global random attractor} for a random dynamical system $(\theta,\vp)$ with metric state space $(X,d)$, if it attracts all bounded sets in the sense of pullback attraction, i.e., for all bounded sets $B\subset X$, one has
\begin{displaymath}
  \lim_{t\to\infty} \dist(\vp(t,\theta_{-t}\omega)B, A(\omega)) = 0 \faa \omega\in \Omega\,,
\end{displaymath}
where $\dist (C,D):= \sup_{c\in C} d(c,D)$ is the \emph{Hausdorff semi-distance} of $C$ and $D$. A global random attractor (given it exists) is always unique \cite{Crauel_94_1}. The existence of random attractors is proved via so-called absorbing sets \cite{Flandoli_96_1}. A bounded set $B\subset X$ is called \emph{absorbing set} if for almost all $\omega\in\Omega$ and any bounded set $D\subset X$, there exists a time $T>0$ such that
\begin{displaymath}
  \vp(t,\theta_{-t}\omega)D \subset B \fa t\ge T\,.
\end{displaymath}
Given an absorbing set $B$, it follows that there exists a global random attractor $\set{A(\omega)}_{\omega\in\Omega}$, given by
\begin{displaymath}
  A(\omega):=\bigcap_{\tau\ge 0} \overline{\bigcup_{t\ge \tau} \vp(t,\theta_{-t} \omega)B} \faa \omega\in\Omega\,.
\end{displaymath}

\emph{Lyapunov exponents and Multiplicative Ergodic Theory.} Given a linear random dynamical system $(\theta,\Phi)$ in $\R^d$, a \emph{Lyapunov exponent} is given by
\begin{displaymath}
  \lambda=\lim_{t\to\pm\infty}\frac{1}{|t|}\ln\norm{\Phi(t,\omega)x} \fs \omega\in\Omega\mand x\in\R^d\,.
\end{displaymath}
The Multiplicative Ergodic Theorem \cite{Oseledets_68_1} shows that there are only finitely many Lyapunov exponents provided the random dynamical system is ergodic and fulfills an integrability condition. More precisely, consider a linear random dynamical system $(\theta:\T\times \Omega\to\Omega, \Phi:\T \times \Omega \to \R^{d\times d})$, suppose that $\theta$ is ergodic and $\Phi$ satisfies the integrability condition
\begin{displaymath}
  \sup_{t\in[0,1]} \ln^+\opintb{\norm{\Phi(t,\cdot)^{\pm 1}}}\in L^1(\mP)\,,
\end{displaymath}
here $\ln^+(x):= \max\set{0,\ln(x)}$. Then the Multiplicative Ergodic Theorem states that almost surely, there exist at most $d$ Lyapunov exponents $\lambda_1<\lambda_2<\dots<\lambda_p$ and fiber-wise decomposition
\begin{displaymath}
  \R^d=O_1(\omega)\oplus O_2(\omega)\oplus\dots\oplus O_p(\omega) \faa \omega\in\Omega
\end{displaymath}
into Oseledets subspaces $O_i\subset \R^d$ such that for all $i \in\set{1,\dots,p}$ and almost all $\omega\in\Omega$, one has
\begin{displaymath}
  \lim_{t\to\pm\infty}\frac{1}{|t|}\ln\norm{\Phi(t,\omega)x}=\lambda_i \fa 0\not= x\in O_i(\omega)\,.
\end{displaymath}


\providecommand{\bysame}{\leavevmode\hbox to3em{\hrulefill}\thinspace}
\providecommand{\MR}{\relax\ifhmode\unskip\space\fi MR }
\providecommand{\MRhref}[2]{%
  \href{http://www.ams.org/mathscinet-getitem?mr=#1}{#2}
}
\providecommand{\href}[2]{#2}

\end{document}